\documentclass[12pt, reqno]{amsart}

\addtolength{\textwidth}{2cm} \addtolength{\hoffset}{-1cm}
\addtolength{\marginparwidth}{-1cm} \addtolength{\textheight}{2cm}
\addtolength{\voffset}{-1cm}

\usepackage{times}
\usepackage[T1]{fontenc}
\usepackage{mathrsfs}
\usepackage{latexsym}
\usepackage[dvips]{graphics}
\usepackage{epsfig}
\usepackage{amsmath,amsfonts,amsthm,amssymb,amscd}
\input amssym.def
\input amssym.tex
\usepackage{color}

\newcommand\be{\begin{equation}}
\newcommand\ee{\end{equation}}
\newcommand\bea{\begin{eqnarray}}
\newcommand\eea{\end{eqnarray}}
\newcommand\bi{\begin{itemize}}
\newcommand\ei{\end{itemize}}
\newcommand\ben{\begin{enumerate}}
\newcommand\een{\end{enumerate}}

\newtheorem{thm}{Theorem}[section]

\newtheorem{cor}[thm]{Corollary}
\newtheorem{lem}[thm]{Lemma}
\newtheorem{prop}[thm]{Proposition}

\newtheorem{defi}[thm]{Definition}

\newtheorem{cla}[thm]{Claim}

\numberwithin{equation}{section}

\begin{document}

\title{An Operator-Valued Kantorovich Metric on Complete Metric Spaces}

\maketitle

\begin{center}Trubee Davison\footnote{Department of Mathematics \\ \indent University of Colorado \\ \indent Campus Box 395 \\ \indent Boulder, CO 80309 \\ \indent trubee.davison@colorado.edu }\end{center}

\begin{abstract} 

The Kantorovich metric provides a way of measuring the distance between two Borel probability measures on a metric space. This metric has a broad range of applications from bioinformatics to image processing, and is commonly linked to the optimal transport problem in computer science \cite{Deng} \cite{Villani}. Noteworthy to this paper will be the role of the Kantorovich metric in the study of iterated function systems, which are families of contractive mappings on a complete metric space. When the underlying metric space is compact, it is well known that the space of Borel probability measures on this metric space, equipped with the Kantorovich metric, constitutes a compact, and thus complete metric space. In previous work, we generalized the Kantorovich metric to operator-valued measures for a compact underlying metric space, and applied this generalized metric to the setting of iterated function systems \cite{Davison} \cite{Davison3} \cite{Davison4}. We note that the work of P. Jorgensen, K. Shuman, and K. Kornelson provided the framework for our application to this setting \cite{Jorgensen2} \cite{Kornelson} \cite{Jorgensen}. The situation when the underlying metric space is complete, but not necessarily compact, has been studied by A. Kravchenko in \cite{Kravchenko}. In this paper, we extend the results of Kravchenko to the generalized Kantorovich metric on operator-valued measures.

\end{abstract}

\tableofcontents

\noindent \keywords[Keywords: Kantorovich metric, Operator-valued measure, Hutchinson measure, Iterated Function System, Fixed point] \\
\\
\noindent [Mathematics subject classification 2010: 46C99 - 46L05] \\

\noindent [Publication note: The final publication is available at Springer via \\
https://doi.org/10.1007/s10440-018-0213-y.]

\section{Introduction} 

This is the third in a series of related papers (see \cite{Davison} and \cite{Davison4}) which discuss a generalized Kantorovich metric for operator-valued measures, and its application to the study of iterated function systems. Let $(Y,d)$ be a complete and separable metric space, and let $\mu$ and $\nu$ be two Borel probability measures on $Y$. We define the Kantorovich metric, $H$, between the two measures by

\begin{equation} \label{KantClass} H(\mu, \nu) = \sup_{f\in \text{Lip}_1(Y)} \left\{ \left| \int_Y f d\mu - \int_Y f d\nu \right| \right\}, \end{equation} \\

\noindent where $\text{Lip}_1(Y) = \{ f:Y \rightarrow \mathbb{R} : |f(x) - f(y)| \leq d(x,y) \text{ for all } x,y \in Y\}.$ 

An appealing feature of the Kantorovich metric is that it is a natural generalization of the underlying metric on $Y$. Indeed, the map $y \in Y \mapsto \delta_y$ is an injective metric space isometry, where $\delta_y$ is the delta measure measure at $y$. Furthermore, if $Y$ is compact, and if $Q(Y)$ is the collection of Borel probability measures on $Y$, we have the following important facts. 

\begin{enumerate} 

\item \label{classical} $(Q(Y), H)$ is compact. 

\item The topology induced by the metric $H$ on $Q(Y)$ coincides with the weak topology on $Q(Y)$.  

\end{enumerate} 

In the case that $(Y,d)$ is not compact, and thus possibly unbounded, the Kantorovich metric may not be finite on $Q(Y)$. To remedy this issue, we restrict the $H$ metric to $M(Y)$, which is defined to be the sub-collection of Borel probability measures $\nu$ on $Y$ such that $\int |f| d\nu < \infty$ for all $f \in \text{Lip}(Y)$, where $\text{Lip}(Y)$ is the collection of real-valued Lipschitz functions on $Y$. 

We say that a sequence of measures $\{\mu_n\}_{n=1}^{\infty} \subseteq M(Y)$ converges weakly to a measure $\mu \in M(Y)$ if 
$$\lim_{n \rightarrow \infty} \int_Y f d\mu_n = \int_Y f d\mu$$ 
\noindent for all $f \in C_b(Y)$, where $C_b(Y)$ denotes the collection of real-valued, bounded, and continuous functions on $Y$. The following result due to A. Kravchenko will be important to this paper. \\

\begin{thm} \cite{Kravchenko} \label{Kravthm} [Kravchenko]

\begin{enumerate} 

\item The metric space $(M(Y), H)$ is complete. 

\item The topology induced by the $H$ metric coincides with the weak topology on $M(Y)$ if and only if the metric space $Y$ is bounded.

\end{enumerate} 

\end{thm}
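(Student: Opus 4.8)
The plan is to recognize $H$ as the $1$-Wasserstein distance and to argue by transport-theoretic methods. First I would invoke the Kantorovich--Rubinstein duality to write
\[
H(\mu,\nu)=\inf_{\pi\in\Pi(\mu,\nu)}\int_{Y\times Y}d(x,y)\,d\pi(x,y),
\]
where $\Pi(\mu,\nu)$ is the set of couplings, i.e.\ Borel measures on $Y\times Y$ with marginals $\mu$ and $\nu$. Fixing a base point $y_0\in Y$, I would record that $\mu\in M(Y)$ is equivalent to the finite first-moment condition $\int_Y d(x,y_0)\,d\mu(x)<\infty$: indeed $d(\cdot,y_0)\in\mathrm{Lip}_1(Y)$, while any $f\in\mathrm{Lip}(Y)$ obeys $|f(x)|\le|f(y_0)|+L\,d(x,y_0)$ for its Lipschitz constant $L$. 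Finally, since $Y$ is separable the weak topology on $M(Y)$ is metrizable (e.g.\ by the L\'evy--Prokhorov metric), so throughout it suffices to compare convergent sequences.

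For the completeness assertion, let $\{\mu_n\}$ be $H$-Cauchy and pass to a subsequence with $H(\mu_{n_k},\mu_{n_{k+1}})<2^{-k}$. Choosing near-optimal couplings of consecutive terms and splicing them via the gluing lemma, I would realize the $\mu_{n_k}$ as laws of random variables $X_k$ on a common probability space with $\mathbb{E}\,d(X_k,X_{k+1})<2^{-k}$. Then $\sum_k\mathbb{E}\,d(X_k,X_{k+1})<\infty$, so $\sum_k d(X_k,X_{k+1})<\infty$ almost surely and $\{X_k\}$ is almost surely Cauchy in $Y$. Here completeness of $Y$ is indispensable: it forces $X_k\to X$ almost surely for some limit $X$, and I take $\mu=\mathrm{law}(X)$ as the candidate. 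Since $(X_k,X)$ is a coupling of $\mu_{n_k}$ and $\mu$, a Fatou estimate yields $H(\mu_{n_k},\mu)\le\mathbb{E}\,d(X_k,X)\le\sum_{j\ge k}2^{-j}\to0$ together with $\int d(x,y_0)\,d\mu<\infty$, so $\mu\in M(Y)$. As a Cauchy sequence possessing a convergent subsequence, $\{\mu_n\}$ then converges to $\mu$. (An alternative route avoids duality: prove directly that an $H$-Cauchy sequence is tight with uniformly integrable distance functions by testing against $(1/\delta)$-Lipschitz cut-offs, extract a weak limit by Prokhorov, and upgrade to $H$-convergence.)

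For the topological statement, suppose first that $Y$ is bounded, of diameter $D$. Then $M(Y)=Q(Y)$ and $d(\cdot,y_0)\le D$ is bounded, so integrability issues disappear. Convergence in $H$ always implies weak convergence, since bounded Lipschitz functions are convergence-determining and are controlled by $H$. For the reverse implication I would argue that, upon replacing each $f\in\mathrm{Lip}_1(Y)$ by $f-f(y_0)$ (which leaves $\int f\,(d\mu_n-d\mu)$ unchanged), the admissible test functions form a family that is uniformly bounded by $D$ and equicontinuous. Given $\mu_n\to\mu$ weakly, the sequence is tight by Prokhorov; restricting to a compact set carrying all but $\varepsilon$ of the mass and applying Arzel\`a--Ascoli upgrades the pointwise convergence $\int f\,d\mu_n\to\int f\,d\mu$ to convergence uniform over the test family, giving $H(\mu_n,\mu)=\sup_f\big|\int f\,(d\mu_n-d\mu)\big|\to0$. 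Hence the two topologies coincide when $Y$ is bounded.

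Conversely, if $Y$ is unbounded then its diameter is infinite, which by the triangle inequality forces $\sup_x d(x,y_0)=\infty$; I may thus choose $x_n$ with $d(x_n,y_0)\ge n^2$ and set $\mu_n=(1-\tfrac1n)\delta_{y_0}+\tfrac1n\delta_{x_n}$. Each $\mu_n$ has finite support, hence lies in $M(Y)$, and $\mu_n\to\delta_{y_0}$ weakly because $\int f\,d\mu_n=(1-\tfrac1n)f(y_0)+\tfrac1n f(x_n)\to f(y_0)$ for every $f\in C_b(Y)$. Yet testing against $f=d(\cdot,y_0)\in\mathrm{Lip}_1(Y)$ gives $H(\mu_n,\delta_{y_0})\ge\tfrac1n d(x_n,y_0)\ge n\to\infty$, so this weakly convergent sequence does not converge in $H$ and the topologies differ. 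I expect the genuine obstacle to lie in part (1): ensuring that no mass escapes to infinity so that the limit is an honest element of $M(Y)$. This is precisely where completeness of $Y$ enters---through almost-sure convergence of the coupled variables, or through relative compactness of tight sets in the alternative route---and it is what separates the present unbounded setting from the classical compact case.
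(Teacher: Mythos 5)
Your proposal is correct, but it takes a genuinely different route from the one the paper (following Kravchenko) relies on. The paper never touches couplings: for part (1) it indicates that Kravchenko's proof stays entirely on the dual (test-function) side, invoking Proposition \ref{keylemma} --- an $H$-Cauchy sequence has Cauchy integrals against every $f \in \text{Lip}_b(Y)$, hence admits a weak limit $\mu$ --- and then uses the cutoff functions $f_k = \min\{k,f\}$ with monotone convergence to show both that $\mu \in M(Y)$ and that $\mu_n \to \mu$ in $H$; this is exactly the template the paper replays for operator-valued measures in Theorem \ref{P_0complete}. Your main argument instead invokes the Kantorovich--Rubinstein duality theorem to produce near-optimal couplings (note this uses the \emph{hard} direction of duality, not just the easy inequality), glues countably many of them (which strictly requires an Ionescu--Tulcea/Kolmogorov extension step, not just the two-measure gluing lemma), and uses summability plus completeness of $Y$ to obtain an almost-sure limit, finishing with Fatou --- the standard completeness proof for Wasserstein spaces in the optimal-transport literature. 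Both are sound. The trade-off that matters here: the dual-side argument transfers verbatim to positive operator-valued measures, where couplings and random variables have no analogue, which is precisely why the paper follows Kravchenko's method rather than the transport proof; your parenthetical alternative (tightness, Prokhorov, then upgrade) is much closer in spirit to Proposition \ref{keylemma}. For part (2) the paper supplies no proof at all (it is cited), and your two-sided argument --- Arzel\`a--Ascoli on a compact set carrying all but $\epsilon$ of the mass in the bounded case, and the escaping-mass example $\mu_n = (1-\tfrac{1}{n})\delta_{y_0} + \tfrac{1}{n}\delta_{x_n}$ with $d(x_n,y_0) \geq n^2$ in the unbounded case --- is complete and correct, with the metrizability of the weak topology rightly invoked so that sequential arguments suffice.
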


\noindent The main tool in proving the first part of the above theorem is the following proposition. Note that $\text{Lip}_b(Y)$ refers to the collection of real-valued, bounded Lipschitz functions on $Y$. 

\begin{prop} \cite{Kravchenko} \label{keylemma}[Kravchenko] Let $\{\mu_n\}_{n=1}^{\infty}$ be a sequence of Borel measures on the complete and separable metric space $Y$ such that $\mu_n(Y) = K < \infty$ for all $n =1,2,...$, and such that for all $f \in \text{Lip}_b(Y)$, the sequence of real numbers 
$$ \left\{ \int_Y f d\mu_n \right\}_{n=1}^{\infty}$$

\noindent is Cauchy.  Then there exists a Borel measure $\mu$ on $Y$ such that $\mu(Y) = K$, and such that the sequence $\{\mu_n\}_{n=1}^{\infty}$ converges weakly to $\mu$. 
\end{prop}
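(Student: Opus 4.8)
The plan is to recognize the common limit of the integrals as integration against a single Borel measure $\mu$, in three stages: build a candidate limit functional, prove tightness, and then produce $\mu$ and upgrade the mode of convergence. Since $\{\int_Y f\, d\mu_n\}$ is Cauchy in $\R$ for every $f \in \text{Lip}_b(Y)$, it converges, so I would first define $L(f) = \lim_n \int_Y f\, d\mu_n$ on $\text{Lip}_b(Y)$. This $L$ is linear, positive (each $\mu_n \geq 0$), satisfies $|L(f)| \leq K\|f\|_\infty$, and obeys $L(\mathbf 1) = K$ because the constant function $\mathbf 1$ lies in $\text{Lip}_b(Y)$.

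The heart of the matter is proving that $\{\mu_n\}$ is \emph{tight}: for each $\epsilon > 0$ there is a compact $C \subseteq Y$ with $\sup_n \mu_n(Y \setminus C) < \epsilon$. Using separability, I fix a countable dense set $\{y_k\}$; for each scale $\delta = 1/j$ the balls $B(y_k,\delta)$ cover $Y$, so $A_m = \bigcup_{k \leq m} B(y_k,\delta)$ increases to $Y$. For each $m$ I take a bounded Lipschitz function $\phi_m$ with $\mathbf 1_{A_m} \leq \phi_m \leq 1$ supported in a slightly enlarged finite union of balls, e.g. $\phi_m(x) = \max(0,\, 1 - \delta^{-1} d(x, A_m))$. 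The genuine difficulty is making the tail estimate uniform in $n$, which amounts to showing $L(\phi_m) \to K$ as $m \to \infty$; this is precisely the continuity-from-below that is not automatic for the bare functional $L$. I would argue by contradiction: if $L(\phi_m) \leq K - 3\theta$ for all $m$, then $\int_Y(\mathbf 1 - \phi_m)\, d\mu_n \geq 2\theta$, hence $\mu_n(Y \setminus A_m) \geq 2\theta$, for every $m$ and all sufficiently large $n$. Exploiting completeness (so that total boundedness coincides with relative compactness) together with separability, I would locate this escaping mass in a sequence of $\delta$-separated regions each carrying mass bounded below, and assemble from them a single bounded Lipschitz function that is alternately $0$ and $1$ on these regions; its integrals $\int_Y f\, d\mu_n$ then oscillate along a subsequence, contradicting the Cauchy hypothesis. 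Running this over $\delta = 1/j$ and intersecting the resulting covers produces, in the complete space $Y$, a compact set capturing all but $\epsilon$ of the mass uniformly in $n$.

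With tightness in hand, Prokhorov's theorem (available since $Y$ is Polish) yields a subsequence $\mu_{n_j}$ converging weakly to a finite Borel measure $\mu$. Testing against $f \in \text{Lip}_b(Y) \subseteq C_b(Y)$ gives $\int_Y f\, d\mu = \lim_j \int_Y f\, d\mu_{n_j} = L(f)$; taking $f = \mathbf 1$ shows $\mu(Y) = L(\mathbf 1) = K$, so no mass is lost to infinity. Finally, because bounded Lipschitz functions are convergence-determining for weak convergence on a metric space, the identity $\lim_n \int_Y f\, d\mu_n = L(f) = \int_Y f\, d\mu$ for all $f \in \text{Lip}_b(Y)$ forces $\int_Y f\, d\mu_n \to \int_Y f\, d\mu$ for every $f \in C_b(Y)$; and since the subsequential limit $\mu$ is determined by $L$ and hence independent of the subsequence, the full sequence $\{\mu_n\}$ converges weakly to $\mu$. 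The one substantial obstacle is the uniform-in-$n$ tail estimate inside the tightness step; the construction of $L$, the passage through Prokhorov, and the final upgrade are routine applications of the definition and of standard weak-convergence machinery.
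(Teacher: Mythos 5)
A preliminary caveat: the paper itself does not prove Proposition \ref{keylemma}; it imports it from Kravchenko, so your proposal can only be measured against the standard argument (Kravchenko's, essentially also Bogachev, \emph{Measure Theory II}, Thm.\ 8.3.2). Your architecture coincides with that argument and is sound in its outer steps: define $L(f)=\lim_n\int_Y f\,d\mu_n$ on $\text{Lip}_b(Y)$; prove uniform tightness; obtain $\mu$ from Prokhorov applied to a subsequence; identify $\int_Y f\,d\mu=L(f)$ and $\mu(Y)=L(\mathbf{1})=K$; and upgrade to weak convergence of the whole sequence because bounded Lipschitz functions are convergence-determining among Borel measures of equal total mass (Portmanteau, via $f_j=\max(0,1-j\,d(\cdot,F))\downarrow\mathbf{1}_F$ for closed $F$). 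Your reduction of tightness is also correct as far as it goes: $L(\varphi_m)$ is nondecreasing, its failure to reach $K$ yields $\mu_n(Y\setminus A_m)\geq 2\theta$ for every $m$ and all large $n$, the escaping mass can indeed be localized in pairwise separated regions (take $D_i=A_{m_{i+1}}\setminus \widehat{A}_{m_i}$, where $\widehat{A}_m$ is the union of the enlarged balls $B(y_k,4\delta)$, $k\leq m$; then $D_i\subseteq A_{m_{i+1}}\subseteq A_{m_j}$ and $D_j\cap\widehat{A}_{m_j}=\emptyset$ force $d(D_i,D_j)\geq 3\delta$), and the scale-by-scale intersection plus completeness of $Y$ does produce the required compact set.

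The genuine gap is the sentence claiming that the integrals of the alternating function ``oscillate along a subsequence.'' Writing $b_i=\max(0,1-\delta^{-1}d(\cdot,D_i))$ and $f=\sum_{i\text{ odd}}b_i$, you do get $\int_Y f\,d\mu_{n_i}\geq\mu_{n_i}(D_i)\geq\theta$ for odd $i$, but you have no upper bound for even $i$: nothing in the construction prevents every $\mu_{n_{2i}}$ from \emph{also} carrying mass $\geq\theta$ on the earlier odd regions $D_1,D_3,\dots$, in which case $\int_Y f\,d\mu_{n_{2i}}$ is not small and no oscillation follows. Each measure is controlled only on its own region. The repair --- and this is the real content of the sliding-hump step in Kravchenko's proof --- requires three further ingredients: (i) when selecting $n_k$, impose in addition that $|\int_Y b_j\,d\mu_{n_k}-L(b_j)|<2^{-j}\theta/100$ for all $j<k$ (finitely many conditions, each achievable because $\int_Y b_j\,d\mu_n$ converges by the Cauchy hypothesis); (ii) control the forward tail by choosing $m_{k+1}$ with $\mu_{n_k}(Y\setminus A_{m_{k+1}})<\theta/4$, so that $\sum_{j>k}\int_Y b_j\,d\mu_{n_k}<\theta/4$; (iii) observe that the $b_j$ have pairwise disjoint supports and $0\leq b_j\leq 1$, so $\sum_j L(b_j)\leq K<\infty$. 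Then along odd $k$ one gets $\int_Y f\,d\mu_{n_k}\geq\sum_{j<k,\,j\text{ odd}}L(b_j)+\theta-\theta/100$, along even $k$ one gets $\int_Y f\,d\mu_{n_k}\leq\sum_{j<k,\,j\text{ odd}}L(b_j)+\theta/4+\theta/100$, and since the series converges, the two subsequential limits of $\int_Y f\,d\mu_n$ differ by roughly $\theta/2$, contradicting convergence. Note that this is where the Cauchy hypothesis must be used a second time, beyond merely defining $L$: for a sequence satisfying only $\mu_n(Y)=K$ (e.g.\ $\mu_n=\delta_n$ on $\mathbb{R}$) tightness fails, so no argument that skips (i)--(iii) can be complete.
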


In previous work, we generalized the Kantorovich metric to operator-valued measures when the underlying metric space $(Y,d)$ is compact \cite{Davison} \cite{Davison3} \cite{Davison4}. Indeed, let $\mathcal{H}$ be a Hilbert space, and define $\mathcal{B}(\mathcal{H})$ to be the bounded operators on $\mathcal{H}$. Put $\mathscr{B}(Y)$ to be the collection of Borel measurable subsets of $Y$. 

\begin{defi} A positive operator $L \in \mathcal{B}(\mathcal{H})$ satisfies $\langle Lh, h \rangle \geq 0$ for all $h \in \mathcal{H}$.
\end{defi} 

\begin{defi} A positive operator-valued measure with respect to the pair $(Y,\mathcal{H})$ is a map $A: \mathscr{B}(Y) \rightarrow \mathcal{B}(\mathcal{H})$ such that:

\begin{itemize} 

\item $A(\Delta)$ is a positive operator in $\mathcal{B}(\mathcal{H})$ for all $\Delta \in \mathscr{B}(Y)$;

\item $A(\emptyset) = 0$ and $A(Y) = \text{id}_{\mathcal{H}}$ (the identity operator on $\mathcal{H}$);

\item If $\{ \Delta_n \}_{n=1}^{\infty}$ is a sequence of pairwise disjoint sets in $\mathscr{B}(Y)$, and if $g,h \in \mathcal{H}$, then
$$ \left \langle A\left( \bigcup_{n=1}^{\infty} \Delta_n \right)g , h \right \rangle = \sum_{n=1}^{\infty} \langle A(\Delta_n)g, h \rangle.$$

\end{itemize}

\end{defi}

\begin{defi} A projection $E \in \mathcal{B}(\mathcal{H})$ satisfies $E^2 = E$ (idempotent) and $E^* = E$ (self-adjoint).
\end{defi} 

\begin{defi} A projection-valued measure with respect to the pair $(Y,\mathcal{H})$ is a map $E: \mathscr{B}(Y) \rightarrow \mathcal{B}(\mathcal{H})$ such that 

\begin{itemize} 

\item $E(\Delta)$ is a projection in $\mathcal{B}(\mathcal{H})$ for all $\Delta \in \mathscr{B}(Y)$;

\item $E(\emptyset) = 0$ and $E(Y) = \text{id}_{\mathcal{H}}$ (the identity operator on $\mathcal{H}$);

\item $E(\Delta_1 \cap \Delta_2) = E(\Delta_1)E(\Delta_2)$ for all $\Delta_1, \Delta_2 \in \mathscr{B}(Y)$;

\item If $\{ \Delta_n \}_{n=1}^{\infty}$ is a sequence of pairwise disjoint sets in $\mathscr{B}(Y)$, and if $g,h \in \mathcal{H}$, then
$$ \left \langle E\left( \bigcup_{n=1}^{\infty} \Delta_n \right)g , h \right \rangle = \sum_{n=1}^{\infty} \langle E(\Delta_n)g, h \rangle.$$

\end{itemize} 
\end{defi}

Let $S(Y)$ be the collection of positive operator-valued measures with respect to the pair $(Y, \mathcal{H})$, and let $P(Y)$ be the collection of projection-valued measures with respect to the pair $(Y, \mathcal{H})$. Since projections are positive operators, we have the inclusion $P(Y) \subseteq S(Y)$. Define a metric $\rho$ on $S(Y)$ by 

\begin{equation} \label{genKant} \rho(A,B) = \sup_{f \in \text{Lip}_1(Y)}\left\{ \left|\left| \int_Y f dA - \int_Y f dB \right|\right| \right\}, \end{equation}

\noindent where $||\cdot||$ denotes the operator norm in $\mathcal{B}(\mathcal{H})$, and $A$ and $B$ are arbitrary members of $S(Y)$.  This is called the generalized Kantorovich metric, and we first defined this metric in \cite{Davison} and \cite{Davison3}. 

\begin{thm} \cite{Davison} \cite{Davison3} [Davison] \label{mainthm}
If $(Y,d)$ is compact, the metric space $(S(Y), \rho)$ is complete, and $P(Y)$ is a closed subset of $S(Y)$ with respect to the $\rho$ metric. As such, $(P(Y), \rho)$ is complete. 

\end{thm}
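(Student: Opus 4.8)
The plan is to reduce the operator-valued metric $\rho$ to the classical scalar Kantorovich metric $H$ by testing against unit vectors. For $C \in S(Y)$ and a unit vector $h \in \mathcal{H}$, the set function $\mu_C^h(\Delta) := \langle C(\Delta) h, h\rangle$ is a Borel probability measure on $Y$, and $\langle (\int_Y f\,dC)\, h, h\rangle = \int_Y f\, d\mu_C^h$. Since $f$ is real and each $C(\Delta)$ is self-adjoint, the operator $\int_Y f\,dA - \int_Y f\,dB$ is self-adjoint, so its norm equals $\sup_{\|h\|=1}$ of the absolute value of its diagonal form. Interchanging the two suprema, I would first establish the identity
\[
\rho(A,B) = \sup_{\|h\|=1} H(\mu_A^h, \mu_B^h).
\]
This recasts every statement about $\rho$ as a statement about $H$, uniformly over the unit sphere of $\mathcal{H}$, and it is the engine for everything that follows.

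To prove completeness, I would take a $\rho$-Cauchy sequence $\{A_n\} \subseteq S(Y)$. By the displayed identity, for each fixed unit vector $h$ the sequence $\{\mu_{A_n}^h\}$ is $H$-Cauchy in $Q(Y)$; since $Y$ is compact, $(Q(Y),H)$ is compact and hence complete (fact \ref{classical}), so $\mu_{A_n}^h \to \mu^h$ in $H$, and equivalently weakly. The main obstacle is to assemble the family $\{\mu^h\}_{\|h\|=1}$ into a genuine positive operator-valued measure $A$. I would do this by defining, for $f \in C(Y)$, the form $(g,h) \mapsto \lim_n \langle (\int_Y f\,dA_n)\, g, h\rangle$, which exists by the vector polarization identity together with weak convergence of the scalar measures, is sesquilinear, and is bounded by $\|f\|_\infty$ because $\|\int_Y f\,dA_n\| \le \|f\|_\infty$ for every positive operator-valued measure. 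This produces a positive, unital, contractive linear map $\phi \colon C(Y) \to \mathcal{B}(\mathcal{H})$, and the operator-valued Riesz representation theorem (equivalently, scalar Riesz representation applied to each functional $f \mapsto \langle \phi(f) h, h\rangle$, followed by polarization) yields a unique $A \in S(Y)$ with $\int_Y f\,dA = \phi(f)$ and $\mu_A^h = \mu^h$; verifying weak countable additivity, positivity, and $A(Y)=\mathrm{id}$ of the reconstructed $A$ is routine. Finally, to see $\rho(A_n, A) \to 0$, I would run the standard argument that a Cauchy sequence converging pointwise converges uniformly: fixing $\epsilon$, choosing $N$ with $\sup_{\|h\|=1} H(\mu_{A_n}^h, \mu_{A_m}^h) < \epsilon$ for $n,m \ge N$, and letting $m \to \infty$ inside the continuous metric $H$ gives $\sup_{\|h\|=1} H(\mu_{A_n}^h, \mu^h) \le \epsilon$, that is, $\rho(A_n, A) \le \epsilon$.

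To show $P(Y)$ is closed, I would take $\{E_n\} \subseteq P(Y)$ with $E_n \to A$ in $\rho$, where $A \in S(Y)$ by the completeness just proved, and show $A$ is projection-valued. The first step is to upgrade $\rho$-convergence to operator-norm convergence of integrals against every continuous function: if $f$ has Lipschitz constant $L$ then $f/L \in \text{Lip}_1(Y)$, so $\|\int_Y f\,dE_n - \int_Y f\,dA\| \le L\,\rho(E_n,A) \to 0$, and since Lipschitz functions are dense in $C(Y)$ and the integration map is uniformly $\|\cdot\|_\infty$-bounded over all positive operator-valued measures, an $\epsilon/3$ argument extends this to all $f \in C(Y)$. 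Because each $E_n$ is projection-valued its integration map is multiplicative, so $\int_Y f^2\,dE_n = (\int_Y f\,dE_n)^2$ for real $f \in C(Y)$; passing to the norm limit on both sides (squaring is norm-continuous on norm-bounded sets) gives $\int_Y f^2\,dA = (\int_Y f\,dA)^2$. Thus $f \mapsto \int_Y f\,dA$ is a unital $*$-homomorphism on $C(Y)$, and by the spectral theorem it is the integration map of a projection-valued measure $E$; uniqueness of the representing measure (a finite Borel measure on a compact metric space is determined by its integrals against $C(Y)$, and operators are determined by their forms via polarization) forces $A = E \in P(Y)$.

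Since $P(Y)$ is closed in the complete space $(S(Y),\rho)$, it is itself complete, which gives the final assertion. I expect the genuine difficulty to be concentrated in the reconstruction of the limiting operator-valued measure in the completeness argument, namely extracting a bona fide positive operator-valued measure from a weakly convergent family of scalar probability measures and checking all of its defining axioms, whereas the reduction identity, the uniformization of the Cauchy limit, and the closedness of $P(Y)$ are comparatively mechanical once that machinery is in place.
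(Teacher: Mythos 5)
Your proposal is correct in substance but follows a genuinely different route from the paper's. The paper (in the cited earlier work, echoed here in the proof of Theorem \ref{P_0complete}) never isolates your key identity $\rho(A,B)=\sup_{\|h\|=1}H(\mu_A^h,\mu_B^h)$; it uses one direction of that inequality inline (Claim \ref{Cauchy}), feeds the scalar sequences into Kravchenko-style weak-limit arguments (Proposition \ref{keylemma}), and rebuilds the limit via polarization and the bounded-sesquilinear-form theorem (Theorem \ref{boundedsesqui}) much as you do. Where the two proofs really diverge is in showing that a $\rho$-limit of projection-valued measures is projection-valued: the paper does this by hand, approximating indicators of closed sets $C,D$ by Lipschitz functions $f_n\downarrow\mathbf{1}_C$, converting weak to strong convergence of positive operators via Berberian's Lemma \ref{pos}, proving the product formula for the approximants (Claim \ref{mult}), deducing $E(C)E(D)=E(C\cap D)$ for closed sets, and extending to Borel sets by tightness; you instead upgrade $\rho$-convergence to norm convergence of the integration maps on all of $C(Y)$ and appeal to the representation theory of $C(Y)$ (spectral theorem plus uniqueness of representing measures). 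Your route is shorter and cleaner, but it leans on compactness at every turn ($(Q(Y),H)$ compact, $H$-topology equal to the weak topology, Lipschitz density in $C(Y)$, Riesz representation, the spectral theorem); the paper's heavier, more hands-on machinery is chosen precisely because it survives the passage to noncompact $Y$, which is the main point of the present paper.

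One step of yours needs repair. From the norm limit of $\int_Y f^2\,dE_n=(\int_Y f\,dE_n)^2$ you obtain only the squaring identity $\int_Y f^2\,dA=(\int_Y f\,dA)^2$, which by polarization yields the Jordan identity $\int_Y fg\,dA=\frac{1}{2}\bigl(\int_Y f\,dA\int_Y g\,dA+\int_Y g\,dA\int_Y f\,dA\bigr)$; the conclusion ``thus a unital $*$-homomorphism'' does not follow from this alone without also knowing that the operators $\int_Y f\,dA$ commute (one can force commutativity via a Kleinecke--Shirokov argument, since the relevant commutator is both quasinilpotent and normal, but that is not free). The fix is immediate and stays entirely inside your own framework: pass to the norm limit in the full product identity $\int_Y fg\,dE_n=(\int_Y f\,dE_n)(\int_Y g\,dE_n)$, valid for all real $f,g\in C(Y)$ because each $E_n$ is projection-valued; both sides converge in norm (products of norm-convergent bounded sequences converge to the product of the limits), which gives multiplicativity directly and hence the unital $*$-homomorphism, after which your spectral-theorem and uniqueness argument goes through as written.
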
 

We now provide some context for this generalized Kantorovich metric by relating it to the operator norm in $\mathcal{B}(\mathcal{H})$, and also by discussing a noteworthy application of this metric to quantum theory. 
Let $M > 0$ be some fixed constant, and let $B_M$ be the collection of all normal operators on $\mathcal{H}$ such that $||N|| \leq M$, where $||\cdot||$ denotes the operator norm of $N$.  We note that if $N \in B_M$, then the spectrum of $N$, denoted $\sigma(N)$, is contained in $B_0(M)$, the closed ball of radius $M$ in $\mathbb{C}$ centered at the origin.  By the Spectral Theorem for Normal Operators, there exists a one-to-one correspondence between $B_M$ and the collection of projection-valued measures with respect to the pair $(B_0(M), \mathcal{H})$.  That is, the map $N \in B_M \mapsto E \in P(B_0(M))$ is bijective, where $E$ satisfies 
$$N = \int_{\sigma(N)} z dE(z).$$   

Since $B_0(M)$ is compact, we conclude by Theorem \ref{mainthm} that $(P(B_0(M)), \rho)$ is a complete metric space.  Moreover, the $\rho$ metric induces a metric on $B_M$ in the following way.  If $N, A \in B_M$, define 
$$\Psi(N,A) = \rho(E,F),$$

\noindent where $N = \int_{\sigma(N)} z dE(z)$, and $A = \int_{\sigma(A)} z dF(z)$.  

The bijective correspondence $N \in B_M \mapsto E \in P(B_0(M))$ guarantees that $\Psi$ is a metric, and that the metric space $(B_M, \Psi)$ is complete.  By definition, a sequence $\{N_k\}_{k=1}^{\infty} \subseteq B_M$ converges to $N \in B_M$ in the $\Psi$ metric if and only if the corresponding sequence $\{E_k\}_{k=1}^{\infty} \subseteq B_0(M)$ converges to $E \in B_0(M)$ in the $\rho$ metric. The relationship between the $\Psi$ metric and operator norm is described below. 

\begin{prop} \label{cool} \cite{Davison} \cite{Davison3} [Davison] The topology induced by the $\Psi$ metric on $B_M$ coincides with the topology induced by the operator norm on $B_M$. 
\end{prop}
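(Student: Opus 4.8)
The plan is to show that the identity map on $B_M$ is a homeomorphism between $(B_M, \Psi)$ and $B_M$ equipped with the operator norm, by establishing continuity of the identity in both directions. Throughout, for $N \in B_M$ I write $E$ for its spectral measure, so that $N = \int_{B_0(M)} z \, dE(z)$ and, for every continuous $h$ on $B_0(M)$, $h(N) = \int_{B_0(M)} h \, dE$ by the continuous functional calculus; similarly $N_k \mapsto E_k$.

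First I would dispatch the easy direction, that $\Psi$-convergence implies norm convergence. The coordinate functions $x(z) = \text{Re}(z)$ and $y(z) = \text{Im}(z)$ both lie in $\text{Lip}_1(B_0(M))$, and writing $z = x + iy$ gives, for $N, A \in B_M$ with spectral measures $E, F$, the decomposition $N - A = (\int x\,dE - \int x\,dF) + i(\int y\,dE - \int y\,dF)$. Since each parenthesized difference has operator norm at most $\rho(E,F)$, the triangle inequality yields $\|N - A\| \leq 2\rho(E,F) = 2\Psi(N,A)$. Thus the identity map $(B_M, \Psi) \to (B_M, \|\cdot\|)$ is Lipschitz, hence continuous.

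The substance is the reverse direction: $N_k \to N$ in operator norm should force $\Psi(N_k, N) = \rho(E_k, E) \to 0$. The main obstacle is that $\rho$ is a supremum over the infinite family $\text{Lip}_1(B_0(M))$, so I need control that is uniform over all $1$-Lipschitz test functions. I would overcome this in three steps. Step one: since adding a constant to $f$ leaves $\int f\,dE_k - \int f\,dE$ unchanged (because $E_k(B_0(M)) = E(B_0(M)) = \text{id}_{\mathcal{H}}$), I may restrict the supremum to the set $\mathcal{L}$ of $1$-Lipschitz functions vanishing at a fixed base point $z_0 \in B_0(M)$; these are uniformly bounded (by $2M$) and equicontinuous, so by Arzel\`a--Ascoli $\mathcal{L}$ is compact in $C(B_0(M))$ with the sup norm. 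Step two: for a single continuous $f$ I would prove $\|f(N_k) - f(N)\| \to 0$ by approximating $f$ uniformly by polynomials in $z$ and $\bar z$ (Stone--Weierstrass on the compact set $B_0(M)$), using that polynomial functional calculus is norm-continuous on the bounded set $B_M$, and invoking that the continuous functional calculus is norm-nonincreasing, i.e. $\|h(N)\| = \sup_{\sigma(N)}|h| \leq \|h\|_{\infty, B_0(M)}$ whenever $N$ is normal with $\sigma(N) \subseteq B_0(M)$. Step three: I would promote this to uniformity over $\mathcal{L}$ by a compactness argument---cover $\mathcal{L}$ by finitely many sup-norm balls of radius $\epsilon/3$ with centers $f_1, \dots, f_m \in \mathcal{L}$, and for arbitrary $f \in \mathcal{L}$ with nearest center $f_j$ estimate
\[
\|f(N_k) - f(N)\| \leq \|f - f_j\|_\infty + \|f_j(N_k) - f_j(N)\| + \|f_j - f\|_\infty < \tfrac{2\epsilon}{3} + \max_{1 \le i \le m}\|f_i(N_k) - f_i(N)\|,
\]
where the two outer terms are controlled by the norm-nonincreasing property (note $\sigma(N_k) \subseteq B_0(M)$). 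As each $\|f_i(N_k) - f_i(N)\| \to 0$ and there are finitely many centers, the right-hand side drops below $\epsilon$ for all large $k$, giving $\sup_{f \in \mathcal{L}}\|f(N_k) - f(N)\| = \rho(E_k, E) \to 0$.

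Having shown both that $\Psi$-convergence implies norm convergence and that norm convergence implies $\Psi$-convergence, I conclude that the two metrics have the same convergent sequences with the same limits; since $B_M$ is metrizable under each, the induced topologies coincide. I expect the Arzel\`a--Ascoli reduction together with the isometric (norm-nonincreasing) nature of the continuous functional calculus to be the crux, since these are precisely what convert pointwise-in-$f$ convergence into the uniform control that the supremum defining $\rho$ demands.
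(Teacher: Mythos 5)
Your proposal is correct and complete. One caveat on the comparison itself: this paper never proves Proposition \ref{cool} --- it is imported from \cite{Davison} and \cite{Davison3} --- so there is no in-paper argument to measure yours against, and your proof must stand as a self-contained argument, which it does. The easy direction is handled exactly as one would expect: $\mathrm{Re}(z)$ and $\mathrm{Im}(z)$ lie in $\text{Lip}_1(B_0(M))$, giving $\|N - A\| \leq 2\Psi(N,A)$. The hard direction is where your argument earns its keep, and you have correctly identified the obstruction: there is no estimate of the form $\|f(N_k) - f(N)\| \leq C\|N_k - N\|$ valid uniformly over $f \in \text{Lip}_1(B_0(M))$ (Lipschitz functions need not be operator Lipschitz), so one cannot bound the supremum defining $\rho$ term by term with a common rate. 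Your detour through compactness is the right fix, and each ingredient checks out: subtracting $f(z_0)$ changes nothing because $E_k(B_0(M)) = E(B_0(M)) = \mathrm{id}_{\mathcal{H}}$ kills constants, so the supremum may be taken over your class $\mathcal{L}$, which is sup-norm compact by Arzel\`a--Ascoli (equi-Lipschitz, bounded by $2M$, and closed under uniform limits); for each fixed continuous $f$, Stone--Weierstrass in $z$ and $\bar{z}$, norm continuity of $N \mapsto p(N,N^*)$ on bounded sets, and the $C^*$-identity $\|h(N)\| = \sup_{\sigma(N)}|h| \leq \|h\|_{\infty, B_0(M)}$ give $\|f(N_k) - f(N)\| \to 0$; and the finite $\epsilon/3$-net upgrades this pointwise-in-$f$ convergence to $\sup_{f \in \mathcal{L}}\|f(N_k) - f(N)\| \to 0$, i.e.\ $\Psi(N_k,N) \to 0$. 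The closing step --- that two metrics with the same convergent sequences induce the same topology --- is valid precisely because both topologies are metrizable by construction.
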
 

Recall that if the metric space $Y$ is compact, then $(Q(Y), H)$ is compact. However, the metric space $(P(Y), \rho)$ is not compact. This hinges on the above proposition, and is described in our previous work \cite{Davison}. 

We now highlight an interesting application of the generalized Kantorovich metric to mathematical physics. Namely, this metric has been previously defined by R.F. Werner in the setting of quantum theory \cite{Werner}. A positive operator-valued measure is also called an observable in physics. Werner introduces the generalized Kantorovich metric as a tool for studying the position and momentum observables, which are central objects of study in quantum theory.

We initially defined the generalized Kantorovich metric to be used in the study of iterated function systems \cite{Davison} \cite{Davison3} \cite{Davison4}. Let $L: Y \rightarrow Y$ be a Lipschitz contraction on the complete and separable metric space $(Y,d)$. The map $L$ admits a unique fixed point $y \in Y$, meaning that $L(y) = y$.  This result is known as the Contraction Mapping Principle, or the Banach Fixed Point Theorem.  In 1981, J. Hutchinson published a seminal paper (see \cite{Hutchinson}), where he generalized the Contraction Mapping Principle to a finite family, $\mathcal{S} = \{\sigma_0,...,\sigma_{N-1}\}$, of Lipschitz contractions on $Y$, where $N \in \mathbb{N}$ is such that $N \geq 2$. Indeed, one can associate to $\mathcal{S}$ a unique compact subset $X \subseteq Y$ which is invariant under the $\mathcal{S}$, meaning that 

\begin{equation} \label{fractal} X = \bigcup_{i=0}^{N-1} \sigma_i(X). \end{equation} 

The finite family of Lipschitz contractions, $\mathcal{S} = \{\sigma_0,...,\sigma_{N-1}\}$, is called an iterated function system (IFS) on $Y$, and the compact invariant subset $X$ described above is called the self-similar fractal set, or attractor set, associated to the IFS. The attractor set can be realized as the support of a Borel probability measure, which we call the Hutchinson measure. This is described in the following theorem. 

\begin{thm} \cite{Hutchinson} \cite{Kravchenko}  [Hutchinson, Kravchenko] \label{ambient} The map $T: M(Y) \rightarrow M(Y)$ given by 

$$T(\nu) = \sum_{i=0}^{N-1} \frac{1}{N}\nu(\sigma_i^{-1}(\cdot)),$$

\noindent is a Lipschitz contraction on the complete metric space $(M(Y), H)$. By the Contraction Mapping Principle, there exists a unique Borel probability measure $\mu \in M(Y)$ such that $T(\mu) = \mu$. The support of $\mu$ is the attractor set $X$. 
\end{thm}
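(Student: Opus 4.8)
The plan is to confirm that $T$ satisfies the hypotheses of the Contraction Mapping Principle on $(M(Y),H)$, which is complete by Theorem~\ref{Kravthm}, and then to pin down the support of the resulting fixed point. Throughout I write $(\sigma_i)_*\nu = \nu(\sigma_i^{-1}(\cdot))$ for the pushforward, so that $T(\nu) = \sum_{i=0}^{N-1}\frac{1}{N}(\sigma_i)_*\nu$, and I record the change-of-variables identity $\int_Y g\, dT(\nu) = \sum_{i=0}^{N-1}\frac{1}{N}\int_Y (g\circ\sigma_i)\, d\nu$, valid for nonnegative or bounded Borel $g$. Let $s < 1$ denote the largest of the Lipschitz constants of $\sigma_0,\dots,\sigma_{N-1}$.

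First I would check that $T$ maps $M(Y)$ into itself. That $T(\nu)$ is a Borel probability measure follows from the countable additivity of $\nu$ and the fact that $\sigma_i^{-1}(Y) = Y$. To see $T(\nu) \in M(Y)$, fix $f \in \text{Lip}(Y)$; each composition $f\circ\sigma_i$ is again Lipschitz, so applying the change-of-variables identity to $|f|$ gives $\int_Y |f|\, dT(\nu) = \sum_i \frac{1}{N}\int_Y |f\circ\sigma_i|\, d\nu < \infty$, the finiteness being exactly the defining property of $\nu \in M(Y)$.

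Next I would prove the contraction estimate $H(T(\mu),T(\nu)) \le s\,H(\mu,\nu)$. For $f \in \text{Lip}_1(Y)$ the composition satisfies $|f(\sigma_i(x)) - f(\sigma_i(y))| \le s\,d(x,y)$, so $\frac{1}{s}(f\circ\sigma_i) \in \text{Lip}_1(Y)$ (the case $s=0$ is trivial, as $T$ is then constant). Hence $\bigl|\int (f\circ\sigma_i)\,d\mu - \int (f\circ\sigma_i)\,d\nu\bigr| \le s\,H(\mu,\nu)$ for each $i$, and averaging these bounds through the change-of-variables identity yields $\bigl|\int f\,dT(\mu) - \int f\,dT(\nu)\bigr| \le s\,H(\mu,\nu)$; taking the supremum over $f \in \text{Lip}_1(Y)$ completes the estimate. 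Since $s<1$, the Contraction Mapping Principle applied to the complete space $(M(Y),H)$ produces a unique fixed point $\mu$.

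The hard part is the final claim $\operatorname{supp}(\mu) = X$, because a priori nothing forces $\mu$ to be concentrated on a compact set. The route I would take is to first show $\mu(X) = 1$. Since $X$ is compact, the space $Q(X)$ of Borel probability measures on $X$ is compact in its intrinsic Kantorovich metric by the classical result recalled in the introduction; moreover, McShane--Whitney extension of $1$-Lipschitz functions from $X$ to $Y$ shows that this intrinsic metric coincides with $H$ on the measures in $M(Y)$ that are concentrated on $X$, so $Q(X)$ embeds isometrically as a compact, hence closed, subset of $(M(Y),H)$. The invariance $X = \bigcup_i \sigma_i(X)$ guarantees $T(Q(X)) \subseteq Q(X)$, so for any fixed $x_0 \in X$ the Picard iterates $T^n(\delta_{x_0})$ lie in $Q(X)$ and converge in $H$ to $\mu$; closedness then gives $\mu \in Q(X)$, i.e.\ $\mu(X)=1$ and $\operatorname{supp}(\mu) \subseteq X$. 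In particular $\operatorname{supp}(\mu)$ is compact. Finally, using separability of $Y$ (which ensures $\nu$ assigns no mass outside its support) together with the identity $\operatorname{supp}((\sigma_i)_*\mu) = \overline{\sigma_i(\operatorname{supp}(\mu))} = \sigma_i(\operatorname{supp}(\mu))$ — the closure being superfluous once $\operatorname{supp}(\mu)$ is known to be compact — the fixed-point equation $\mu = \sum_i \frac{1}{N}(\sigma_i)_*\mu$ yields $\operatorname{supp}(\mu) = \bigcup_i \sigma_i(\operatorname{supp}(\mu))$. Thus $\operatorname{supp}(\mu)$ is a nonempty compact invariant set, and the uniqueness of the invariant compact set in Hutchinson's theorem identifies it with $X$.
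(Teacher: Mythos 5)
Your proposal is correct, but there is nothing in the paper to compare it against: Theorem \ref{ambient} is stated as background and attributed to Hutchinson \cite{Hutchinson} and Kravchenko \cite{Kravchenko}, with no proof given in the paper itself. What you have written is a faithful reconstruction of that cited argument. The verification that $T$ preserves $M(Y)$ and the contraction estimate obtained by noting that $\frac{1}{s}(f\circ\sigma_i)\in\text{Lip}_1(Y)$ for $f\in\text{Lip}_1(Y)$ is exactly Hutchinson's computation, transplanted to the non-compact setting where completeness of $(M(Y),H)$ is supplied by Theorem \ref{Kravthm}. The genuinely delicate point in this setting is the support claim, since a priori the fixed point need not be concentrated on any compact set, and your resolution is sound: the McShane--Whitney extension argument shows that $H$ restricted to measures concentrated on $X$ agrees with the intrinsic Kantorovich metric of $Q(X)$, so $Q(X)$ sits inside $(M(Y),H)$ as a compact, hence closed, $T$-invariant subset; trapping the Picard iterates $T^n(\delta_{x_0})$ there forces $\mu(X)=1$; and the identity $\operatorname{supp}(\mu)=\bigcup_i\sigma_i(\operatorname{supp}(\mu))$ (where separability justifies $\mu(\operatorname{supp}(\mu))=1$, and compactness lets you drop the closure) combined with uniqueness of the invariant compact set identifies $\operatorname{supp}(\mu)$ with $X$. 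One small remark: for the Contraction Mapping Principle to apply you are implicitly using that $H$ is finite on $M(Y)$; this is part of what Theorem \ref{Kravthm} asserts (and follows from the same $f\mapsto f-f(x_0)$ trick the paper uses later for $\rho$ on $S_0(Y)$), so it would be worth a sentence, but it is not a gap.
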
 

Let $\mathcal{H} = L^2(X, \mu)$, where $X$ is the attractor set associated to $\mathcal{S}$, and $\mu$ is the corresponding Hutchinson measure. Following the work of P. Jorgensen, we define operators on $\mathcal{H}$ as follows:

\begin{center} $F_i: \mathcal{H} \rightarrow \mathcal{H}$ given by $\displaystyle{\phi \mapsto \frac{1}{\sqrt{N}} (\phi \circ \sigma_i)}$ \end{center} 

\noindent for all $i = 0, ..., N-1$. 

\begin{thm} \cite{Kornelson} \label{Kornelson} [Jorgensen, Kornelson, Shuman] The family of operators $\{F_i\}_{i=0}^{N-1}$ satisfy the operator identity 

$$\sum_{i=0}^{N-1} F_i^*F_i = \text{id}_{\mathcal{H}}. $$

\end{thm}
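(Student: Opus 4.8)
The plan is to verify the operator identity in its weak (bilinear) form. Since a bounded operator on $\mathcal{H}$ is completely determined by the numbers $\langle A\phi, \psi\rangle$ as $\phi,\psi$ range over $\mathcal{H} = L^2(X,\mu)$, it suffices to show that
$$ \left\langle \left( \sum_{i=0}^{N-1} F_i^* F_i \right) \phi, \psi \right\rangle = \langle \phi, \psi \rangle $$
for all $\phi, \psi \in \mathcal{H}$. Moving each adjoint $F_i^*$ back across the inner product turns the left-hand side into $\sum_{i=0}^{N-1} \langle F_i\phi, F_i\psi\rangle$, so the entire theorem reduces to the scalar identity $\sum_{i=0}^{N-1} \langle F_i\phi, F_i\psi\rangle = \langle \phi, \psi\rangle$.

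Next I would compute a single term explicitly. From the definition of $F_i$,
$$ \langle F_i\phi, F_i\psi\rangle = \int_X \frac{1}{\sqrt{N}}(\phi\circ\sigma_i)\,\overline{\frac{1}{\sqrt{N}}(\psi\circ\sigma_i)}\, d\mu = \frac{1}{N}\int_X (\phi\overline{\psi})\circ\sigma_i \, d\mu. $$
Setting $g = \phi\overline{\psi}$, which lies in $L^1(X,\mu)$ as the product of two $L^2$ functions, the target becomes the measure-theoretic identity $\frac{1}{N}\sum_{i=0}^{N-1}\int_X g\circ\sigma_i\, d\mu = \int_X g\, d\mu$. This is the crux, and it is where I would bring in the invariance of the Hutchinson measure. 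By the change-of-variables formula for pushforward measures, $\int_X (g\circ\sigma_i)\, d\mu = \int_X g\, d\big((\sigma_i)_*\mu\big)$, where $(\sigma_i)_*\mu(\Delta) = \mu(\sigma_i^{-1}(\Delta))$. Summing and using linearity of the integral in the measure gives $\frac{1}{N}\sum_{i=0}^{N-1}\int_X g\circ\sigma_i\, d\mu = \int_X g\, d\big(\frac{1}{N}\sum_{i=0}^{N-1}(\sigma_i)_*\mu\big)$. The measure appearing here is exactly $T(\mu)$, and Theorem \ref{ambient} tells us the Hutchinson measure satisfies $T(\mu)=\mu$; hence this integral collapses to $\int_X g\, d\mu$, which closes the argument.

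The genuinely delicate points are not in this computation but in the underlying well-definedness, and that is where I expect the only real care to be needed. First, one must know that each $\sigma_i$ maps $X$ into $X$ so that $\phi\circ\sigma_i$ is again a function on $X$; this follows from the invariance (\ref{fractal}) of the attractor. Second, one must check that $F_i$ is a well-defined bounded operator on $L^2$ equivalence classes. Both facts hinge on the domination $(\sigma_i)_*\mu \leq N\mu$, which is immediate from $\mu = \frac{1}{N}\sum_{j=0}^{N-1}(\sigma_j)_*\mu$ because every summand is a positive measure. This domination yields $\|F_i\phi\|^2 = \frac{1}{N}\int_X |\phi|^2\, d\big((\sigma_i)_*\mu\big) \leq \int_X |\phi|^2\, d\mu = \|\phi\|^2$, so each $F_i$ is in fact a contraction, and it simultaneously guarantees that $\mu$-null sets pull back to $\mu$-null sets, so that $F_i$ is independent of the chosen representative. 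Once this bookkeeping is in place, the main identity follows from nothing more than change of variables and the fixed-point property $T(\mu)=\mu$.
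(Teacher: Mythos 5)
Your proof is correct. A point of context: the paper itself gives no proof of this statement---it is quoted from Jorgensen--Kornelson--Shuman \cite{Kornelson}---so there is no internal argument to compare against; but your reduction is exactly the content of the result in that reference. You correctly observe that the operator identity $\sum_{i=0}^{N-1}F_i^*F_i = \mathrm{id}_{\mathcal{H}}$ is, in weak form, the scalar identity $\frac{1}{N}\sum_{i=0}^{N-1}\int_X g\circ\sigma_i\,d\mu = \int_X g\,d\mu$ for $g = \phi\overline{\psi}\in L^1(X,\mu)$, and that this is nothing but the fixed-point property $T(\mu)=\mu$ of the Hutchinson measure (Theorem \ref{ambient}) read through the change-of-variables formula for pushforwards. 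The extra care you take is also well placed: the domination $(\sigma_i)_*\mu \leq N\mu$, which falls out of $\mu = \frac{1}{N}\sum_j(\sigma_j)_*\mu$, is precisely what makes $\phi\mapsto\frac{1}{\sqrt{N}}(\phi\circ\sigma_i)$ well defined on $L^2$ equivalence classes and a contraction, a point often passed over silently (and note it shows each $F_i$ is bounded with $\|F_i\|\leq 1$, consistent with the sum of the $F_i^*F_i$ being the identity). No gaps.
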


Consider the metric space, $(S(X), \rho)$, consisting of positive operator-valued measures with respect to the pair $(X, \mathcal{H})$. 

\begin{thm} \label{contraction} \cite{Davison4} [Davison] The map $V: S(X) \rightarrow S(X)$ given by $$B(\cdot) \mapsto \sum_{i=0}^{N-1} F_i^* B(\sigma_i^{-1}(\cdot))F_i$$ 

\noindent is a Lipschitz contraction in the $\rho$ metric. 

\end{thm}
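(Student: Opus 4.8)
The plan is to first confirm that $V$ genuinely maps $S(X)$ into itself, and then to establish the contraction estimate directly from the definition of the $\rho$ metric. For well-definedness I would verify the three defining properties of a positive operator-valued measure for $V(B)$. Each operator $V(B)(\Delta) = \sum_{i=0}^{N-1} F_i^* B(\sigma_i^{-1}(\Delta)) F_i$ is positive, since $\langle F_i^* B(\sigma_i^{-1}(\Delta)) F_i h, h \rangle = \langle B(\sigma_i^{-1}(\Delta)) F_i h, F_i h \rangle \geq 0$. The normalization $V(B)(\emptyset) = 0$ is immediate, and because each $\sigma_i$ maps $X$ into $X$ we have $\sigma_i^{-1}(X) = X$, so that
\[ V(B)(X) = \sum_{i=0}^{N-1} F_i^* B(X) F_i = \sum_{i=0}^{N-1} F_i^* F_i = \text{id}_{\mathcal{H}} \]
by Theorem \ref{Kornelson}. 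Weak countable additivity of $V(B)$ transfers from that of $B$ because preimages preserve disjoint countable unions.

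The crux of the argument is to compute the operator integral of a test function against $V(B)$. Interpreting the operator integral weakly through the scalar measures $\Delta \mapsto \langle B(\Delta) F_i g, F_i h \rangle$, the $i$-th contribution to the measure $\Delta \mapsto \langle V(B)(\Delta) g, h \rangle$ is exactly the pushforward of $\langle B(\cdot) F_i g, F_i h \rangle$ under $\sigma_i$. Applying the change-of-variables identity $\int_X f \, d\langle B(\sigma_i^{-1}(\cdot)) F_i g, F_i h \rangle = \int_X (f \circ \sigma_i) \, d\langle B(\cdot) F_i g, F_i h \rangle$ and reassembling, I would obtain the key formula
\[ \int_X f \, d(V(B)) = \sum_{i=0}^{N-1} F_i^* \left( \int_X (f \circ \sigma_i) \, dB \right) F_i. \]
This converts the action of $V$ on measures into precomposition of the test function with each $\sigma_i$, which is precisely where the contractivity of the iterated function system enters.

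Let $c_i < 1$ denote the Lipschitz constant of $\sigma_i$ and put $c = \max_i c_i < 1$. For $f \in \text{Lip}_1(X)$ I would observe that $f \circ \sigma_i$ satisfies $|f(\sigma_i(x)) - f(\sigma_i(y))| \leq c_i \, d(x,y) \leq c \, d(x,y)$, so $\tfrac{1}{c}(f \circ \sigma_i) \in \text{Lip}_1(X)$. Setting $D_i = \int_X (f \circ \sigma_i) \, dA - \int_X (f \circ \sigma_i) \, dB$, the formula above yields
\[ \int_X f \, d(V(A)) - \int_X f \, d(V(B)) = \sum_{i=0}^{N-1} F_i^* D_i F_i, \]
and each $\|D_i\| = c \, \bigl\| \int_X \tfrac{1}{c}(f \circ \sigma_i) \, dA - \int_X \tfrac{1}{c}(f \circ \sigma_i) \, dB \bigr\| \leq c \, \rho(A, B)$ by the definition of $\rho$.

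The main obstacle is bounding $\bigl\| \sum_i F_i^* D_i F_i \bigr\|$ without incurring a spurious factor of $N$ from the triangle inequality. The key observation is that, since $f$ is real-valued and $A, B$ take values in positive (hence self-adjoint) operators, each integral $\int_X (f \circ \sigma_i) \, dA$ is self-adjoint, so every $D_i$ is self-adjoint. Thus $\|D_i\| \leq M := \max_i \|D_i\|$ gives the operator inequalities $-M \, \text{id}_{\mathcal{H}} \leq D_i \leq M \, \text{id}_{\mathcal{H}}$; conjugating by $F_i$ preserves these inequalities, producing $-M F_i^* F_i \leq F_i^* D_i F_i \leq M F_i^* F_i$. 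Summing over $i$ and invoking $\sum_i F_i^* F_i = \text{id}_{\mathcal{H}}$ from Theorem \ref{Kornelson} yields $-M \, \text{id}_{\mathcal{H}} \leq \sum_i F_i^* D_i F_i \leq M \, \text{id}_{\mathcal{H}}$. Since the middle operator is self-adjoint, this forces $\bigl\| \sum_i F_i^* D_i F_i \bigr\| \leq M \leq c \, \rho(A, B)$. Taking the supremum over $f \in \text{Lip}_1(X)$ then gives $\rho(V(A), V(B)) \leq c \, \rho(A, B)$ with $c < 1$, establishing that $V$ is a Lipschitz contraction.
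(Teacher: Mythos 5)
Your proposal is correct, and it fills a gap that the paper itself leaves open: Theorem \ref{contraction} is only quoted here from \cite{Davison4}, and when the contraction property is needed again (in the proof of Theorem \ref{extenduniquepovm}) the author verifies only that $V$ maps $S_0(Y)$ into $S_0(Y)$, deferring contractivity to Theorem 2.15 of \cite{Davison} and its erratum \cite{Davison2}. Your argument follows the same skeleton as that cited work --- well-definedness via $\sum_{i} F_i^* F_i = \text{id}_{\mathcal{H}}$ (Theorem \ref{Kornelson}), the change-of-variables identity $\int_X f \, d(V(B)) = \sum_{i} F_i^* \bigl( \int_X (f \circ \sigma_i) \, dB \bigr) F_i$, and the observation that $\tfrac{1}{c}(f \circ \sigma_i) \in \text{Lip}_1(X)$ --- but the step where you bound $\bigl\| \sum_i F_i^* D_i F_i \bigr\|$ is the genuinely delicate point, and you handle it correctly. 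The naive triangle inequality only gives $\sum_i \| F_i^* D_i F_i \| \leq N c \, \rho(A,B)$, which is not a contraction estimate unless $Nc < 1$; the fact that the paper points to an erratum for the ``correct proof'' signals that getting past this is nontrivial. Your route --- each $D_i$ is self-adjoint, so $-M F_i^* F_i \leq F_i^* D_i F_i \leq M F_i^* F_i$ with $M = \max_i \|D_i\| \leq c\,\rho(A,B)$, then summing against $\sum_i F_i^* F_i = \text{id}_{\mathcal{H}}$ and using self-adjointness of the sum to convert the two-sided operator inequality into a norm bound --- eliminates the factor of $N$ cleanly. (Equivalently, one can estimate $\bigl| \bigl\langle \sum_i F_i^* D_i F_i h, h \bigr\rangle \bigr| \leq \sum_i \|D_i\| \, \|F_i h\|^2 \leq M \|h\|^2$ and again invoke self-adjointness.) One small caveat: if every $\sigma_i$ is constant then $c = \max_i c_i = 0$ and you cannot divide by $c$; either note that all $D_i$ vanish in that degenerate case, or run the argument with an arbitrary $c' \in (\max_i c_i, 1)$ in place of $c$.
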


\begin{cor} \label{fixedPOVM} \cite{Davison4} [Davison] There exists a unique positive operator-valued measure $A \in S(X)$ such that
\begin{equation} A(\cdot) = \sum_{i=0}^{N-1} F_i^* A(\sigma_i^{-1}(\cdot))F_i. \end{equation}
\end{cor} 

The proof of the above corollary hinges on Theorem \ref{contraction}, and the fact that $(S(X), \rho)$ is a complete metric space (which is Theorem \ref{mainthm}). Notably, the operator-valued measure $A$ generalizes the the Hutchinson measure in the sense that if $f = 1 \in \mathcal{H}$,  the scalar measure $A_{f,f}(\cdot) =\langle A(\cdot)f, f \rangle$ satisfies 
$$A_{f,f}(\cdot) = \mu(\cdot).$$ 

It is important to note that Jorgensen was the first to associate an operator-valued measure to an iterated function system, and thus he laid the groundwork for our study of this topic. Indeed, we have the following definition and result. 

\begin{defi} An IFS has non-essential overlap if 

$$\mu(\sigma_i(X) \cap \sigma_j(X)) = \emptyset$$
when $i \neq j$. 

\end{defi} 

\begin{thm}\cite{Jorgensen2} \cite{Jorgensen}  \label{uniquepvm} [Jorgensen] Suppose that an IFS has non-essential overlap, and that each member of the IFS is an injection. There exists a unique projection-valued measure $E$ with respect to the pair $(X, \mathcal{H})$ such that

$$E(\cdot) = \sum_{i=0}^{N-1} F_i^* E(\sigma_i^{-1}(\cdot)) F_i.$$

\end{thm}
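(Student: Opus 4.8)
The plan is to obtain $E$ by restricting the contraction $V$ of Theorem \ref{contraction} to the subcollection $P(X)\subseteq S(X)$ of projection-valued measures. By Corollary \ref{fixedPOVM} the map $V$ already has a unique fixed point $A\in S(X)$, and by Theorem \ref{mainthm} the set $P(X)$ is closed in the complete metric space $(S(X),\rho)$, so $(P(X),\rho)$ is itself complete. Thus it suffices to show that $V$ maps $P(X)$ into $P(X)$. Granting this, $V|_{P(X)}$ is a contraction on a complete space, so the Contraction Mapping Principle produces a unique fixed point $E\in P(X)$; since $E$ is also a fixed point of $V$ in $S(X)$, the uniqueness in Corollary \ref{fixedPOVM} forces $E=A$. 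Hence the fixed-point measure is projection-valued, and it is the unique such measure in $P(X)$.

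The crux, and the only place the hypotheses enter, is the operator identity
\[ F_iF_j^* = \delta_{ij}\,\text{id}_{\mathcal{H}}, \]
which I would isolate as a lemma. Set $R_i=\sigma_i(X)$. First I would compute $F_i^*$ explicitly. Writing out the defining pairing and using the invariance $\mu=\sum_i\frac{1}{N}\,\mu\circ\sigma_i^{-1}$ of the Hutchinson measure, non-essential overlap forces the pushforward $\mu\circ\sigma_i^{-1}$ to have Radon--Nikodym derivative $N\chi_{R_i}$ with respect to $\mu$; the injectivity of $\sigma_i$ then permits a change of variables and yields $(F_i^*\psi)(y)=\sqrt{N}\,\chi_{R_i}(y)\,\psi(\sigma_i^{-1}(y))$. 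Composing gives $(F_iF_j^*\psi)(x)=\chi_{R_j}(\sigma_i(x))\,\psi(\sigma_j^{-1}(\sigma_i(x)))$. When $i=j$ the factor $\chi_{R_i}(\sigma_i(x))$ equals $1$ identically, so $F_iF_i^*=\text{id}_{\mathcal{H}}$; when $i\neq j$ the set $\{x:\sigma_i(x)\in R_j\}$ has $\mu$-measure $N\mu(R_i\cap R_j)=0$ by non-essential overlap, so $F_iF_j^*=0$. I expect this lemma to be the main obstacle, since the change of variables computing $F_i^*$ must invoke the invariance of $\mu$, the identification of its density, and the injectivity of $\sigma_i$ simultaneously, all modulo $\mu$-null sets.

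With the identity in hand the preservation step is short. Each operator $V(E)(\Delta)=\sum_i F_i^*E(\sigma_i^{-1}(\Delta))F_i$ is self-adjoint because $E(\sigma_i^{-1}(\Delta))$ is, and $V(E)$ is automatically a positive operator-valued measure because $V$ maps $S(X)$ into itself. For idempotency I would expand
\[ V(E)(\Delta)^2=\sum_{i,j}F_i^*E(\sigma_i^{-1}(\Delta))\,F_iF_j^*\,E(\sigma_j^{-1}(\Delta))F_j \]
and substitute $F_iF_j^*=\delta_{ij}\,\text{id}_{\mathcal{H}}$, collapsing the double sum to $\sum_i F_i^*E(\sigma_i^{-1}(\Delta))^2F_i=\sum_i F_i^*E(\sigma_i^{-1}(\Delta))F_i=V(E)(\Delta)$, using that $E(\cdot)$ is idempotent. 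The identical manipulation, together with $E(\Delta_1')E(\Delta_2')=E(\Delta_1'\cap\Delta_2')$ and $\sigma_i^{-1}(\Delta_1)\cap\sigma_i^{-1}(\Delta_2)=\sigma_i^{-1}(\Delta_1\cap\Delta_2)$, gives the multiplicativity $V(E)(\Delta_1)V(E)(\Delta_2)=V(E)(\Delta_1\cap\Delta_2)$. Thus $V(E)\in P(X)$, which completes the reduction and, with the fixed-point argument of the first paragraph, the proof.
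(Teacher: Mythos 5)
Your proposal is correct, but you should know that this paper contains no proof of Theorem \ref{uniquepvm} at all: the statement is a citation to Jorgensen \cite{Jorgensen2} \cite{Jorgensen}, whose original argument is operator-algebraic rather than metric. In Jorgensen's route, the relations you isolate, $F_iF_j^*=\delta_{ij}\,\text{id}_{\mathcal{H}}$ together with $\sum_i F_i^*F_i=\text{id}_{\mathcal{H}}$, say that the operators $S_i=F_i^*$ generate a representation of the Cuntz algebra $\mathcal{O}_N$, and the projection-valued measure is constructed explicitly from that representation on cylinder sets of the symbol space and pushed to $X$ by the coding map; traces of this construction appear later in the paper, cf. equation (\ref{pvm_alphabet}) and the identity $E(\{\beta\in\Omega:\beta_n=\alpha_n,\ 1\le n\le K\})=R_{\alpha_1}^*\cdots R_{\alpha_K}^*R_{\alpha_K}\cdots R_{\alpha_1}$ used in the support computation. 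What you wrote is instead precisely the ``alternative proof \ldots using the Contraction Mapping Principle'' that the paper attributes to \cite{Davison} in the paragraph immediately following the theorem: restrict the contraction $V$ of Theorem \ref{contraction} to $P(X)$, invoke Theorem \ref{mainthm} (legitimate since the attractor $X$ is compact) to get completeness of $P(X)$, and verify the invariance $V(P(X))\subseteq P(X)$, which is exactly where the two hypotheses enter. Your key lemma is sound as sketched: invariance of $\mu$ gives $\mu(\sigma_j^{-1}(S))\le N\mu(S)$ for every Borel $S$, hence each pushforward is absolutely continuous with respect to $\mu$, and combined with $\mu(\sigma_i(X)\cap\sigma_j(X))=0$ for $i\ne j$ this yields $\mu(\sigma_i^{-1}(\Delta))=N\mu(\Delta\cap\sigma_i(X))$, from which your formula for $F_i^*$ and the relation $F_iF_j^*=\delta_{ij}\,\text{id}_{\mathcal{H}}$ follow; the algebraic collapse of the double sum is then routine. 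The trade-off between the two routes: Jorgensen's construction is explicit and carries extra structure (the $\mathcal{O}_N$-representation exploited elsewhere in the paper), whereas yours gets existence abstractly but buys a stronger uniqueness for free --- by Corollary \ref{fixedPOVM} the fixed point is unique even among positive operator-valued measures, so the projection-valued solution coincides with the measure $A$ of that corollary.
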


Previously in \cite{Davison}, we presented an alternative proof to Theorem \ref{uniquepvm} using the Contraction Mapping Principle. This approach was further generalized to the case of an arbitrary IFS, which is described in Theorem \ref{contraction}, and Corollary \ref{fixedPOVM}. 

In this paper, we will consider the generalized Kantorovich metric when the underlying metric space is complete and separable (not necessarily compact). Namely, we will generalize Part (1) of Theorem \ref{Kravthm} to projection and positive operator-valued measures. An application of this result will be to restate Theorem \ref{contraction} in the following way. Notice that the map $T$ described in Theorem \ref{ambient} is a map on $M(Y)$, where $Y$ is the ambient metric space which contains the attractor set $X$ as a subset. The map $V$ described in Theorem \ref{contraction} is a map on $S(X)$, where $X$ is the attractor set (not the ambient space). We will extend $V$ to a map on $S_0(Y)$, a sub-collection of positive operator-valued measures with respect to the pair $(Y, L^2(Y,\mu))$, thereby providing a direct generalization of Theorem \ref{ambient} to operator-valued measures. For this to be possible, we note that the maps $F_i: L^2(X,\mu) \rightarrow L^2(X,\mu)$ defined above can extend to maps $F_i: L^2(Y, \mu) \rightarrow L^2(Y\mu)$ given by the same formula, and Theorem \ref{Kornelson} still holds true if we view these operators on $L^2(Y, \mu)$. 

\section{Results}

\subsection{A Sub-Collection of Operator-Valued Measures}  

Let $(Y, d)$ be a complete and separable metric space, and let $\mathcal{H}$ be a Hilbert space. Let $P_0(Y)$ be the collection of projection-valued measures with respect to the pair $(Y, \mathcal{H})$ with the following additional property:  If $E \in P_0(Y)$, then for all $f \in \text{Lip}(Y)$, there exists an $0 \leq M_{f,E} < \infty$ such that
$$\left| \int_Y f dE_{g,h} \right| \leq M_{f,E}||g||||h||,$$

\noindent for all $g,h \in \mathcal{H}$, and where $\text{Lip}(Y)$ denotes the collection of all real-valued Lipschitz functions on $Y$. An example of an element in $P_0(Y)$ would be a projection-valued measure $E$ such that $E(K) = {\bf{1}}_{\mathcal{H}}$, for $K$ a compact subset of $Y$.  In this case, note that $E(Y\setminus K) = 0$.  If $f \in \text{Lip}(Y)$, let $M_{f,E} = \max_{x \in K} |f(x)|$ and observe that
\begin{eqnarray*}
\left| \int_Y f dE_{g,h} \right| & \leq & \left| \int_K f dE_{g,h} \right| + \left| \int_{Y\setminus K} f dE_{g,h} \right| \\
& \leq &  \int_K |f| d|E_{g,h}| \\
& \leq & M_{f,E} ||g||||h||, 
\end{eqnarray*} 

\noindent for all $g, h \in \mathcal{H}$, which implies that $E \in P_0(Y)$.  We can define $S_0(Y)$ to be the positive operator-valued measures with respect to the pair $(Y, \mathcal{H})$ with this same additional property. We have the containment $P_0(Y) \subseteq S_0(Y)$, since projections are positive operators. 

On this sub-collection of positive operator-valued measures we will consider the generalized Kantorovich metric.  That is, for $A,B \in S_0(Y)$ define (exactly as before)
$$\rho(A,B) = \sup_{f \in \text{Lip}_1(Y)} \left \{ \left| \left| \int_Y f dA - \int_Y f dB \right| \right| \right \}.$$

\begin{thm} \cite{Conway} \label{boundedsesqui}[Theorem II.2.2 in Conway] Let $u: \mathcal{H} \times \mathcal{H} \rightarrow \mathbb{C}$ be a bounded sesquilinear form with bound $M$.  There exists a unique operator $A \in \mathcal{B}(\mathcal{H})$ such that $u(g,h) = \langle Ag, h \rangle$ for all $g,h \in \mathcal{H}$, and such that $||A|| \leq M$.
\end{thm}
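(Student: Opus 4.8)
The plan is to build the operator $A$ vector by vector using the Riesz Representation Theorem, and then to check linearity, the norm bound, and uniqueness as separate routine steps. First I would fix $g \in \mathcal{H}$ and consider the map $h \mapsto \overline{u(g,h)}$. Since $u$ is sesquilinear---linear in its first argument and conjugate-linear in its second---this map is genuinely linear in $h$, and the bound hypothesis gives $|\overline{u(g,h)}| = |u(g,h)| \leq M\|g\|\|h\|$, so it is a bounded linear functional of norm at most $M\|g\|$.

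Next, by the Riesz Representation Theorem there is a unique vector $w_g \in \mathcal{H}$ with $\overline{u(g,h)} = \langle h, w_g \rangle$ for all $h \in \mathcal{H}$, and with $\|w_g\| \leq M\|g\|$. Taking complex conjugates gives $u(g,h) = \langle w_g, h \rangle$, so I would define $Ag := w_g$, which yields $u(g,h) = \langle Ag, h \rangle$ by construction.

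I would then verify that $A$ is linear. For scalars $\alpha, \beta$ and vectors $g_1, g_2 \in \mathcal{H}$, linearity of $u$ in its first slot gives $\langle A(\alpha g_1 + \beta g_2), h \rangle = \alpha \langle Ag_1, h \rangle + \beta \langle Ag_2, h \rangle = \langle \alpha Ag_1 + \beta Ag_2, h \rangle$ for every $h$; since a vector in $\mathcal{H}$ is determined by its inner products against all $h$, linearity of $A$ follows. The norm estimate is immediate from $\|Ag\| = \|w_g\| \leq M\|g\|$, so $\|A\| \leq M$. For uniqueness, if $A'$ also represents $u$, then $\langle (A - A')g, h \rangle = 0$ for all $g, h \in \mathcal{H}$, which forces $A = A'$.

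The only place where care is genuinely needed is keeping the complex conjugation in the right position, so that the functional handed to the Riesz theorem is honestly linear rather than conjugate-linear; this is precisely what determines which slot of the inner product $Ag = w_g$ occupies. Everything else is bookkeeping, and no ingredient beyond the completeness of $\mathcal{H}$ already encoded in the Riesz Representation Theorem is required.
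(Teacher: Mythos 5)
Your proof is correct and complete. The paper itself offers no proof of this statement---it is quoted as Theorem II.2.2 of Conway's \emph{A Course in Functional Analysis}---and your argument (fixing $g$, applying the Riesz Representation Theorem to the bounded linear functional $h \mapsto \overline{u(g,h)}$, then verifying linearity of $A$, the bound $\|A\| \leq M$, and uniqueness) is precisely the standard argument found in that source, with the conjugation handled correctly so that the functional fed to Riesz is linear rather than conjugate-linear.
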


We will now show that this metric is finite on $S_0(Y)$. To do this, we need to make a preliminary observation.  In particular, if $A \in S_0(Y)$ and $f \in \text{Lip}(Y)$, there exists by definition $0 \leq M_{f,A} < \infty$ such that
$$\left| \int_Y f dA_{g,h} \right| \leq M_{f,A}||g||||h||,$$

\noindent for all $g,h \in \mathcal{H}$.  This means that the map $[g,h] \mapsto \int_Y f dA_{g,h}$ is a bounded sesquilinear form.  By Theorem \ref{boundedsesqui}, there exists a bounded operator $\int f dA \in \mathcal{B}(\mathcal{H})$ such that
$$\left \langle \left( \int_Y f dA \right)g, h \right \rangle = \int _Y f dA_{g,h},$$

\noindent for all $g,h \in \mathcal{H}$, where $\left| \left| \int_Y f dA \right| \right| \leq M_{f,A}$.  

Let $A, B \in S_0(Y)$, $f \in \text{Lip}_1(Y)$, and $x_0 \in Y$.  Then
\begin{eqnarray*} \label{finite}
\left| \left| \int_Y f dA - \int_Y f dB \right| \right| & = & \left| \left| \int_Y f dA - f(x_0)\text{id}_{\mathcal{H}} + f(x_0)\text{id}_{\mathcal{H}} - \int_Y f dB \right| \right| \nonumber \\ 
								      & = & \left| \left| \int_Y f dA - \int_Y f(x_0) dA - \left( \int_Y f dB - \int_Y f(x_0) dB \right) \right| \right| \nonumber \\ 
								      & \leq & \left| \left| \int_Y (f - f(x_0)) dA \right| \right| + \left| \left| \int_Y \left(f - f(x_0)\right) dB \right| \right| \nonumber 
\end{eqnarray*}

\noindent Let $h \in \mathcal{H}$ with $||h||=1$.  Then

\begin{eqnarray*} 
\left| \left \langle \left(\int_Y (f(x) - f(x_0)) dA \right)h, h \right \rangle \right| & = & \left| \int_Y (f(x) - f(x_0)) dA_{h,h}(x) \right| \\ 
& \leq & \int_Y |f(x) - f(x_0)| dA_{h,h}(x) \\
& \leq & \int_Y d(x,x_0) dA_{h,h}(x) \\
& \leq & M_{d(x,x_0),A} ||h||^2 \\
& = & M_{d(x,x_0),A},
\end{eqnarray*}

\noindent where $M_{d(x,x_0),A} \geq 0$ is the non-negative number associated to the $\text{Lip}(Y)$ function $d(x,x_0)$ and the positive operator-valued measure $A$.  Hence, 
$$\left| \left| \int_Y (f - f(x_0)) dA \right| \right| \leq M_{d(x,x_0), A}.$$

\noindent Similarly, there exists an $M_{d(x,x_0),B} \geq 0$ such that
$$\left| \left| \int_Y (f - f(x_0)) dB \right| \right| \leq M_{d(x,x_0),B}.$$

\noindent Since $M_{d(x,x_0),A}$ and $M_{d(x,x_0),B}$ do not depend on the choice of $f \in \text{Lip}_1(Y)$, $\rho(A,B) \leq M_{d(x,x_0),A} + M_{d(x,x_0),B} < \infty.$

\subsection{The Metric Space $(P_0(Y), \rho)$ is Complete} 

In this section, we will show that the metric space $(P_0(Y), \rho)$ is complete.  We will rely on Proposition \ref{keylemma}.  We will also use the following lemma, which can be found in the proof of Proposition 1 in \cite{Berberian}. 

\begin{lem} \cite{Berberian} \label{pos}[Proposition 1 in Berberian] Let $\{B_n\}_{n=1}^{\infty}$ be a sequence of positive operators on the Hilbert space $\mathcal{H}$ such that $||B_n|| \leq M$ for all $n = 1, 2,...$, and such that for all $h \in \mathcal{H}$, $\lim_{n \rightarrow \infty} \langle B_n h, h \rangle = 0$.  Then $\lim_{n \rightarrow \infty} ||B_n h|| =0$.
\end{lem}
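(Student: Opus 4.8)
The plan is to reduce the desired strong convergence $||B_n h|| \to 0$ to the given scalar convergence $\langle B_n h, h\rangle \to 0$ by proving, for each fixed $h \in \mathcal{H}$ and each $n$, the pointwise estimate
$$||B_n h||^2 \leq M \langle B_n h, h\rangle.$$
Once this is in hand, the conclusion is immediate: the right-hand side tends to $0$ by hypothesis, forcing $||B_n h||^2 \to 0$, and hence $||B_n h|| \to 0$.

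To establish the estimate, I would exploit positivity through the generalized Cauchy--Schwarz inequality. Since each $B_n$ is a positive operator, the sesquilinear form $(g,k) \mapsto \langle B_n g, k\rangle$ is positive semidefinite, so the ordinary Cauchy--Schwarz argument applies to it and yields
$$|\langle B_n g, k \rangle|^2 \leq \langle B_n g, g\rangle \, \langle B_n k, k\rangle$$
for all $g, k \in \mathcal{H}$. Applying this with $k = h$ and $g = B_n h$, and using that a positive operator is self-adjoint (so that $\langle B_n(B_n h), h\rangle = \langle B_n^2 h, h\rangle = ||B_n h||^2$), the left-hand side becomes $||B_n h||^4$. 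On the right-hand side, the first factor satisfies $\langle B_n(B_n h), B_n h\rangle \leq ||B_n|| \, ||B_n h||^2 \leq M ||B_n h||^2$, where the uniform bound $||B_n|| \leq M$ is exactly what makes this work. Combining gives
$$||B_n h||^4 \leq M \, ||B_n h||^2 \, \langle B_n h, h\rangle,$$
and dividing by $||B_n h||^2$ (the case $||B_n h|| = 0$ being trivial) produces the claimed inequality.

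The main point, and the step I would be most careful about, is the interplay between the generalized Cauchy--Schwarz inequality and the uniform norm bound $M$: it is the hypothesis $||B_n|| \leq M$ that converts the quadratic control supplied by Cauchy--Schwarz into a linear bound of $||B_n h||^2$ by $\langle B_n h, h\rangle$. As an alternative route avoiding the Cauchy--Schwarz manipulation, one could instead pass to the positive square root $B_n^{1/2}$, note that $\langle B_n h, h\rangle = ||B_n^{1/2} h||^2$, and estimate
$$||B_n h|| = ||B_n^{1/2}(B_n^{1/2} h)|| \leq ||B_n^{1/2}|| \, ||B_n^{1/2} h|| = ||B_n||^{1/2} \, ||B_n^{1/2} h|| \leq M^{1/2} \, ||B_n^{1/2} h||,$$
which again forces $||B_n h|| \to 0$. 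Either way, no appeal to compactness or to the specific structure of the $B_n$ beyond positivity and the uniform bound is needed.
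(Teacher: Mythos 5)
Your proof is correct and uses essentially the same argument as the paper: the generalized Cauchy--Schwarz inequality for the positive form $(g,k)\mapsto\langle B_n g,k\rangle$, applied to the pair $B_n h$ and $h$, combined with the uniform bound $\|B_n\|\leq M$. The only difference is bookkeeping --- the paper bounds the factor $\langle B_n g, g\rangle$ crudely by $M^3\|h\|^2$ to get $\|B_n h\|^2 \leq M^{3/2}\|h\|\langle B_n h,h\rangle^{1/2}$, while you retain $\|B_n h\|^2$ and divide, obtaining the slightly cleaner estimate $\|B_n h\|^2\leq M\langle B_n h,h\rangle$; both conclude identically.
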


\begin{proof} Let $h \in \mathcal{H}$.  Note that $||B_n h||^2 = \langle B_n h, B_n h \rangle = |\langle B_n h, g \rangle|$ where $g = B_n h$.  By a generalized Schwarz inequality, 
$$0 \leq |\langle B_n h, g \rangle| \leq \langle B_n h, h \rangle^{\frac{1}{2}} \langle B_n g, g \rangle^{\frac{1}{2}} \leq \langle B_n h, h \rangle^{\frac{1}{2}} ( ||B_n g || ||g||)^{\frac{1}{2}} \leq $$ 
$$\langle B_n h, h \rangle^{\frac{1}{2}} ( ||B_n||^3 ||h||^2)^{\frac{1}{2}} \leq \langle B_n h, h \rangle^{\frac{1}{2}}M^{\frac{3}{2}} ||h||.$$

\noindent By assumtion, $\lim_{n \rightarrow \infty} \langle B_n h, h \rangle = 0$.  Hence, $\lim_{n \rightarrow \infty} \langle B_n h, h \rangle^{\frac{1}{2}}M^{\frac{3}{2}} ||h|| = 0$, which implies that 
$\lim_{n \rightarrow \infty} ||B_n h|| =0$. 
\end{proof} 

\begin{thm} \label{P_0complete} \cite{Davison3} [Davison] The metric space $(P_0(Y), \rho)$ is complete. 
\end{thm}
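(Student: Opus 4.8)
The plan is to take a $\rho$-Cauchy sequence $\{E_n\}_{n=1}^\infty \subseteq P_0(Y)$ and produce a limit $E \in P_0(Y)$ with $\rho(E_n, E) \to 0$. First I would pass to scalar measures: for each $h \in \mathcal{H}$ the set functions $(E_n)_{h,h}(\cdot) = \langle E_n(\cdot) h, h\rangle$ are positive Borel measures of constant total mass $\|h\|^2$ (because $E_n(Y) = \text{id}_{\mathcal{H}}$), and for fixed $f \in \text{Lip}_1(Y)$ and $\|h\|=1$ the estimate $|\int_Y f\, d(E_n)_{h,h} - \int_Y f\, d(E_m)_{h,h}| \le \rho(E_n,E_m)$ shows $\{\int_Y f\, d(E_n)_{h,h}\}_n$ is Cauchy for every $f \in \text{Lip}_1(Y)$, hence after rescaling for every $f \in \text{Lip}_b(Y)$. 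Proposition \ref{keylemma} then yields a positive Borel measure $\mu_h$ with $\mu_h(Y) = \|h\|^2$ to which $(E_n)_{h,h}$ converges weakly. Polarizing in $h$ produces complex Borel measures $\mu_{g,h}$, and since each $E_n(\Delta)$ is a projection one has the uniform total-variation bound $|(E_n)_{g,h}|(Y) \le \|g\|\,\|h\|$; this makes $(g,h) \mapsto \mu_{g,h}(\Delta)$ a bounded sesquilinear form for each Borel $\Delta$, so Theorem \ref{boundedsesqui} furnishes operators $E(\Delta)$ with $\langle E(\Delta) g, h\rangle = \mu_{g,h}(\Delta)$ and $\|E(\Delta)\| \le 1$. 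Countable additivity and positivity, together with $E(\emptyset)=0$ and $E(Y)=\text{id}_{\mathcal{H}}$, transfer directly from the scalar measures, so $E$ is at least a positive operator-valued measure.

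The second, and I expect hardest, conceptual task is to upgrade $E$ to a projection-valued measure, and here I would exploit that $\rho$-convergence is far stronger than weak convergence. Writing $\Phi_n(f) = \int_Y f\, dE_n$, the Cauchy condition says $\sup_{f \in \text{Lip}_1(Y)} \|\Phi_n(f) - \Phi_m(f)\| \to 0$, so $\Phi_n(f)$ converges in operator norm to some $\Phi(f)$ for each Lipschitz $f$. Because each $E_n$ is projection-valued, $\Phi_n(f^2) = \Phi_n(f)^2$, and since norm convergence respects products (the factors being uniformly bounded by $\sup|f|$), passing to the limit gives $\Phi(f^2) = \Phi(f)^2$ for every bounded Lipschitz $f$. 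To convert this into idempotency of $E$ on sets I would, for a closed set $\Delta$, choose bounded Lipschitz $f_k \downarrow \mathbf{1}_\Delta$ with $f_k \ge \mathbf{1}_\Delta$, for instance $f_k = \max(1 - k\, d(\cdot,\Delta), 0)$. Then $\Phi(f_k) \ge E(\Delta)$ is a decreasing sequence of positive operators with $\langle (\Phi(f_k) - E(\Delta)) h, h\rangle \to 0$ by dominated convergence against $\mu_h$, so Lemma \ref{pos} gives the strong convergence $\Phi(f_k) \to E(\Delta)$. Combining $\Phi(f_k)^2 = \Phi(f_k^2) \to E(\Delta)$ weakly with $\Phi(f_k)^2 \to E(\Delta)^2$ strongly yields $E(\Delta)^2 = E(\Delta)$; self-adjointness holds since $E(\Delta)$ is positive, and the multiplicativity $E(\Delta_1 \cap \Delta_2) = E(\Delta_1)E(\Delta_2)$ then extends from closed sets by regularity, placing $E$ in $P(Y)$.

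It remains to check $E \in P_0(Y)$ and $\rho(E_n, E) \to 0$. For the growth condition, note $g_0 = d(\cdot, x_0) \in \text{Lip}_1(Y)$, so $\{\int_Y g_0\, dE_n\}_n$ is norm-Cauchy and hence $\sup_n \|\int_Y g_0\, dE_n\| =: M_0 < \infty$; applying Portmanteau to the nonnegative lower semicontinuous $g_0$ gives $\int_Y d(x,x_0)\, d\mu_h \le \liminf_n \int_Y d(x,x_0)\, d(E_n)_{h,h} \le M_0 \|h\|^2$, and for general $f \in \text{Lip}(Y)$ the bound $|f - f(x_0)| \le L_f\, d(\cdot,x_0)$ produces the required constant $M_{f,E}$, so $E \in P_0(Y)$. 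Finally, the identification $\Phi(f) = \int_Y f\, dE$ holds for bounded continuous $f$ directly from weak convergence; once it is extended to unbounded $f \in \text{Lip}_1(Y)$, fixing $n$ and letting $m \to \infty$ in $\sup_{f}\|\Phi_n(f) - \Phi_m(f)\| < \epsilon$ gives $\rho(E_n, E) \le \epsilon$. The main obstacle is precisely this passage from bounded to unbounded Lipschitz test functions: weak convergence alone controls only bounded $f$, and it is the possible unboundedness of $Y$ that forces the uniform tail estimates, which I would supply via truncation together with the moment bound $M_0$ above. This is exactly the difficulty that distinguishes the present setting from the compact case of Theorem \ref{mainthm}, and it is what Kravchenko's Proposition \ref{keylemma} and the $\rho$-boundedness of the sequence are designed to overcome.
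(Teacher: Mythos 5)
Your proposal is correct and follows the same overall skeleton as the paper's proof: reduce to the diagonal scalar measures (the paper's Claim \ref{Cauchy}), invoke Kravchenko's Proposition \ref{keylemma}, polarize to get $\mu_{g,h}$, reconstruct $E$ via the bounded sesquilinear form theorem (Theorem \ref{boundedsesqui}), prove multiplicativity of the limiting integrals by pushing the identity for the projection-valued $E_m$ through the operator-norm limit, approximate indicators of closed sets by decreasing Lipschitz functions and apply Berberian's Lemma \ref{pos}, and finally extend to Borel sets. Where you genuinely depart from the paper is the verification that $E \in P_0(Y)$: you use the Portmanteau inequality for the nonnegative lower semicontinuous function $d(\cdot,x_0)$ under weak convergence, together with the moment bound $M_0 = \sup_n \| \int_Y d(\cdot,x_0)\, dE_n \| < \infty$ and the pointwise estimate $|f - f(x_0)| \le L_f\, d(\cdot,x_0)$. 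The paper instead proves $\int_Y f\, dE_{h,h} < \infty$ by a truncation-plus-contradiction argument and then extracts the constant from the uniform bound $N$ on $\| \int_Y f\, dE_n \|$. Your route is shorter, avoids the contradiction, and gives an explicit constant of the form $C\left( |f(x_0)| + L_f M_0 \right)$; the only thing to add is a polarization step to pass from the diagonal bound on $\mu_{h}$ to the off-diagonal bound on $\mu_{g,h}$, at the cost of a harmless numerical factor.

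Two steps in your sketch of the projection-valued property need filling, though both are fillable with tools you already have. First, you only record $\Phi(f^2) = \Phi(f)^2$, which yields idempotency $E(\Delta)^2 = E(\Delta)$ for closed $\Delta$; but what must be extended to Borel sets is the mixed identity $E(C)E(D) = E(C\cap D)$ for \emph{distinct} closed sets, so you need the product form $\Phi(fg) = \Phi(f)\Phi(g)$ (this is exactly the paper's Claim \ref{mult}, proved by the identical norm-limit argument) applied to $f_k \downarrow {\bf 1}_C$, $g_k \downarrow {\bf 1}_D$, $f_k g_k \downarrow {\bf 1}_{C\cap D}$. Second, ``extends from closed sets by regularity'' compresses a real argument: the paper uses tightness of $E_{h,h}$ on the complete separable space $Y$ to choose compact $C_k \subseteq \Delta_1$, $D_k \subseteq \Delta_2$ with $E_{h,h}(C_k) \uparrow E_{h,h}(\Delta_1)$, $E_{h,h}(D_k) \uparrow E_{h,h}(\Delta_2)$, and then applies Lemma \ref{pos} once more to upgrade to strong convergence $E(C_k)h \to E(\Delta_1)h$, $E(D_k)h \to E(\Delta_2)h$ before multiplying. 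Finally, the passage from bounded to unbounded test functions that you defer does work exactly as you indicate, and is in substance the paper's proof of its part (2): fix $n \ge N$, apply the uniform Cauchy bound to the truncations $f_k = \min\{f,k\} \in \text{Lip}_1(Y)$, let $m \to \infty$ using weak convergence, then let $k \to \infty$ using monotone convergence on both sides --- the finiteness needed for that last step is precisely what your Portmanteau bound (respectively, membership of $E_n$ and $E$ in $P_0(Y)$) supplies.
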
 

\begin{proof} We note that the proof of this result uses some of the same techniques as in the proof of the analogous result by Kravchenko \cite{Kravchenko} that $(M(Y), H)$ is complete (see Theorem \ref{Kravthm}).  Suppose that $\{E_n\}_{n=1}^{\infty}$ is a Cauchy sequence of elements in $(P_0(Y), \rho)$.  We want to find an $E \in (P_0(Y), \rho)$ such that $E_n \rightarrow E$ in the $\rho$ metric.

\begin{cla} \label{Cauchy} Let $h \in \mathcal{H}$ and $f \in \text{Lip}(Y)$.  The sequence $\{\int_Y f dE_{n_{h,h}}\}_{n=1}^{\infty}$ is a Cauchy sequence of real numbers.  
\end{cla}

\noindent Proof of Claim: This claim follows from the observation that there exists a $T > 0$ such that $\frac{f}{T} \in \text{Lip}_1(Y)$.  Hence
$$0 \leq \left| \int_Y \frac{f}{T} dE_{n_{h,h}} - \int_Y \frac{f}{T} dE_{m_{h,h}} \right| = \left| \left \langle \left( \int_Y \frac{f}{T} dE_n - \int_Y \frac{f}{T} dE_m \right) h, h \right \rangle \right| \leq $$
$$\left| \left| \int_Y \frac{f}{T} dE_n - \int_Y \frac{f}{T} dE_m \right| \right| ||h||^2 \leq \rho(E_n,E_m) ||h||^2 \rightarrow 0$$

\noindent as $m,n \rightarrow \infty$.  Since $||h||$ and $T$ are fixed, 
$$\left| \int_Y f dE_{n_{h,h}} - \int_Y f dE_{m_{h,h}} \right| \rightarrow 0$$

\noindent as $m,n \rightarrow \infty$.  This proves the claim.  

Observe that $E_{n_{h,h}}(Y) = \langle E_n(Y)h,h \rangle = ||h||^2$ for all $n = 1,2,...$.  Since $\text{Lip}_b(Y) \subseteq \text{Lip}(Y)$, we can use Proposition \ref{keylemma} to conclude that there exists a Borel measure $\mu_{h,h}$ on $Y$ such that $\mu_{h,h}(Y) = ||h||^2$, and such that $E_{n_{h,h}}$ converges to $\mu_{h,h}$ in the weak topology.  That is, for all $f \in C_b(Y)$ we have that
$$ \lim_{n \rightarrow \infty} \int_Y f dE_{n_{h,h}} = \int_Y f d\mu_{h,h}.$$

\noindent For $g, h \in \mathcal{H}$, we want to define $\mu_{g,h}$ such that 
\begin{equation} \label{weakconv} \lim_{n \rightarrow \infty} \int_Y f dE_{n_{g,h}} = \int_Y f d\mu_{g,h} \end{equation}
for all $f \in C_b(Y)$.  To this end, let $g, h \in \mathcal{H}$.  If $E_{n_{g,h}} = \text{Re}E_{n_{g,h}} + i\text{Im}E_{n_{g,h}},$ we can calculate that 
$$\mathrm{Re}E_{n_{g,h}} = \frac{1}{2}(E_{n_{g+h,g+h}} - E_{n_{g,g}} -E_{n_{h,h}})$$ 

\noindent and
$$\text{Im}E_{n_{g,h}} = -\frac{1}{2}(E_{n_{ig+h,ig+h}} - E_{n_{g,g}} -E_{n_{h,h}}).$$  

\noindent Accordingly, define $\text{Re}\mu_{g,h} = \frac{1}{2}(\mu_{g+h,g+h} - \mu_{g,g} -\mu_{h,h})$ and $\text{Im}\mu_{g,h} = -\frac{1}{2}(\mu_{ig+h,ig+h} - \mu_{g,g} -\mu_{h,h}).$  Hence, by the discussion in the above paragraph, we can conclude equation (\ref{weakconv}). 

Using a similar method as in the proof of Theorem 2.19 from our previous paper (see \cite{Davison4}), we can conclude that the map $[g,h] \mapsto \mu_{g,h}$ is sesquilinear, and that $\mu_{g,h}$ inherits the following two additional properties: 

\begin{itemize} 

\item For $g,h \in \mathcal{H}$, $\mu_{g,h}$ has total variation less than or equal to $||g||||h||$. 

\item For $g,h \in \mathcal{H}$, $\overline{\mu_{g,h}} = \mu_{h,g}$. 

\end{itemize}

Let $\Delta \in \mathscr{B}(Y)$.  The map $[g,h] \mapsto \int_Y {\bf{1}}_{\Delta} d\mu_{g,h}$ is a bounded sesquilinear form with bound $1$.  By Theorem \ref{boundedsesqui}, there exists a unique bounded operator, $E(\Delta) \in \mathcal{B}(\mathcal{H})$, such that for all $g,h \in \mathcal{H}$
$$\langle E(\Delta)g, h \rangle = \int_Y {\bf{1}}_{\Delta}  d\mu_{g,h},$$

\noindent with $||E(\Delta)|| \leq 1$.  Define $E: \mathscr{B}(Y) \rightarrow \mathcal{B}(\mathcal{H})$ by $\Delta \mapsto E(\Delta)$, and note that for $g, h \in \mathcal{H}$, $E_{g,h} = \mu_{g,h}$.  This map $E$ is a positive operator-valued measure. The proof this fact can be found in Theorem 2.19 in our previous paper \cite{Davison4}. It remains to show that $E \in P_0(Y)$.  That is, we need to show: 

\begin{enumerate} 

\item For $f \in \text{Lip}(Y)$, there exists an $M_{f,E} < \infty$ such that $\left| \int_Y f dE_{g,h} \right| \leq M_{f,E}||g||||h||$ for all $g,h \in \mathcal{H}$. 

\item $\{E_n\}_{n=1}^{\infty}$ converges to $E$ in the $\rho$ metric. 

\item $E$ is a projection-valued measure. 

\end{enumerate} 

\noindent We will first show $(1)$.  Choose some $f \geq 0 \in \text{Lip}(Y)$.  There exists a $T>0$ such that $\frac{1}{T}f \in \text{Lip}_1(Y)$.  Since we are assuming the sequence $\{E_n\}_{n=1}^{\infty}$ is Cauchy in the $\rho$ metric, the sequence of self-adjoint operators $\left\{ \int f dE_n \right\}_{n=1}^{\infty}$ is Cauchy in the operator norm topology on $\mathcal{B}(\mathcal{H})$.   This implies that these operators are uniformly bounded.  That is, there exists an $N>0$ such that
$$\left| \left| \int_Y f dE_n \right| \right| \leq N$$

\noindent for all $n = 1, ... \infty$.   

For all $g, h \in \mathcal{H}$, we claim that$\left| \int_Y f dE_{g,h} \right| < \infty.$  Indeed, choose $h \in \mathcal{H}$ and consider the sequence $\{E_{n_{h,h}}\}_{n=1}^{\infty}$ which converges to $E_{h,h}$ in the weak topology.  For $k=1,2,3,...$, define $f_k(x) = \min\{k, f(x)\} \in C_b(Y) \cap \text{Lip}(Y)$, and note that $f_k \uparrow f$ on $Y$.  We note here that the idea of using the cutoff function, $f_k$, is also a central part of the proof of Theorem \ref{Kravthm} by A. Kravchenko.  By the monotone convergence theorem
$$\int_Y f_k dE_{h,h} \uparrow \int_Y f dE_{h,h}.$$

\noindent Suppose this is an unbounded increasing sequence.  Then choose a $k_l$ such that
$$\int_Y f_{k_l} dE_{h,h} > l,$$

\noindent where $l = 1,2,...$.  For a fixed $l$,
$$\int_Y f_{k_l} dE_{n_{h,h}} \rightarrow \int_Y f_{k_l} dE_{h,h}, $$

\noindent because $\{E_{n_{h,h}}\}_{n=1}^{\infty}$ converges to $E_{h,h}$ in the weak topology and $f_{k_l} \in C_b(Y)$.  Hence choose an $n_l$ such that
$$\int_Y f_{k_l} dE_{n_{l_{h,h}}} > l.$$  Again by the monotone convergence theorem 
$$\int_Y f_{k_l} dE_{n_{l_{h,h}}} \uparrow \int_Y f dE_{n_{l_{h,h}}},$$ 

\noindent and hence, 
$$\int_Y f dE_{n_{l_{h,h}}} > l.$$  

\noindent This last line is a contradiction to the fact that the sequence $\left\{ \int_Y f dE_{n_{h,h}} \right\}_{n=1}^{\infty}$ is a Cauchy sequence of real numbers (because $f \in \text{Lip}(Y)$ and Claim \ref{Cauchy}) .  Hence $\int_Y f dE_{h,h} < \infty$ for all $h \in \mathcal{H}$.  For $g, h \in \mathcal{H}$, we can decompose $E_{g,h}$ into its positive measure parts, as we have done previously, to get that $\left| \int_Y f dE_{g,h} \right| < \infty.$

The next thing to note is that since $f_k(x) \leq f(x)$, $\int_Y f_k dE_n \leq \int_Y f dE_n$ for all $n$, as elements of $\mathcal{B}(\mathcal{H})$.  Hence for any $k$ and $n$, 
$$\left| \left| \int_Y f_k dE_n \right| \right| \leq \left| \left| \int_Y f dE_n \right| \right| \leq N.$$  

\noindent We are now prepared to show that there exists an $M_{f,E} < \infty$ such that
$$\left| \int_Y f dE_{g,h} \right| \leq M_{f,E} ||g||||h||,$$

\noindent for all $g,h \in \mathcal{H}$.  Let $\epsilon > 0$ and let $g, h \in \mathcal{H}$.  Since $f_k \uparrow f$ and $\left| \int_Y f dE_{g,h} \right| < \infty,$ there exists a $k$ such that
$$\left| \int_Y (f-f_k) dE_{g,h} \right| < \epsilon.$$  

\noindent Observe that

\begin{eqnarray*}
\left| \int_Y f dE_{g,h} \right| & \leq & \left| \int_Y (f-f_k) dE_{g,h} \right| + \left| \int_Y f_k dE_{g,h} \right| \\
					   & \leq & \epsilon + \lim_{n \rightarrow \infty} \left| \int_Y f_k d E_{n_{g,h}} \right|, 
					   \end{eqnarray*} 
					   
\noindent where the second inequality is because $f_k \in C_b(Y)$.  We know that for all $n$ and $k$ that
$$ \left| \int_Y f_k d E_{n_{g,h}} \right| \leq \left| \left \langle \left( \int f_k dE_n \right)g, h \right \rangle \right| \leq N||g||||h||.$$

\noindent Therefore 
$$\epsilon + \lim_{n \rightarrow \infty} \left| \int_Y f_k d E_{n_{g,h}} \right| \leq \epsilon + N||g||||h||.$$

\noindent Since $\epsilon$ is arbitrary, 
$$\left| \int_Y f dE_{g,h} \right| \leq N||g||||h||.$$

\noindent Note that $N$ does not depend on the choice of $g,h \in \mathcal{H}$.  It only depends on the choice of $f \geq 0 \in \text{Lip}(Y)$.  Hence, we can let $M_{f,E} = N$.  

For any arbitrary $f \in \text{Lip}(Y)$, decompose $f$ into its positive and negative parts; $f = f_+ - f_{-}$.  Note that $f_+$ and $f_{-}$ are both non-negative elements of $\text{Lip}(Y)$.  Let $M_{f,E} = M_{f_{+}, E} + M_{f_{-}, E}$.  For $g, h \in \mathcal{H}$, 
\begin{eqnarray*} 
\left| \int_Y f dE_{g,h} \right| & \leq & \left| \int_Y f_{+} dE_{g,h} \right| + \left| \int_Y f_{-} dE_{g,h} \right| \\
				& \leq & M_{f_{+}, E} ||g||||h|| + M_{f_{-}, E} ||g||||h|| \\
				& = & M_{f,E}|g||||h|| 
				\end{eqnarray*}

\noindent This completes the proof of $(1)$.  We will next show $(2)$.  We need to show that $E_n \rightarrow E$ in the $\rho$ metric.  Let $\epsilon > 0$.  Since $\{E_n\}_{n=1}^{\infty}$ is Cauchy in the $\rho$ metric, there exists an $N$ such that for $n, m \geq N$, $\rho(E_n, E_m) \leq \frac{\epsilon}{6}$.  Let $n \geq N$, let $f \in \text{Lip}_1(Y)$ with $f \geq 0$, and define $f_k(x) = \min\{k, f(x)\} \in C_b(Y) \cap \text{Lip}_1(Y)$.  As before, observe that $f_k \uparrow f$ on $Y$.  Let $h \in \mathcal{H}$ with $||h|| = 1$.  By the monotone convergence theorem, 
$$\int_Y f_k dE_{n_{h,h}} \uparrow \int_Y f dE_{n_{h,h}} < \infty$$

\noindent and
$$\int_Y f_k dE_{h,h} \uparrow \int_Y f dE_{h,h} < \infty,$$

\noindent where the finiteness of the limiting integrals is because $E_n \in P_0(Y)$, and because $E$ satisfies part $(1)$ above.   Accordingly, choose $k$ such that
$$\left| \int_Y f_k dE_{n_{h,h}} - \int_Y f dE_{n_{h,h}} \right| \leq \frac{\epsilon}{6}$$

\noindent and
$$\left| \int_Y f_k dE_{h,h} - \int_Y f dE_{h,h} \right| \leq \frac{\epsilon}{6}.$$

\noindent Since $\{E_{m_{h,h}}\}_{m=1}^{\infty}$ converges in the weak topology to  $E_{h,h}$, and $f_k \in C_b(Y)$, 
$$\lim_{m \rightarrow \infty} \int_Y f_k dE_{m_{h,h}} = \int_Y f_k dE_{h,h}.$$

\noindent Then
$$ \left| \left \langle \left( \int_Y f dE_n - \int_Y f dE \right)h, h \right \rangle \right|  =  \left| \int_Y f dE_{n_{h,h}} - \int_Y f dE_{h,h} \right| $$
$$ \leq  \left| \int_Y f dE_{n_{h,h}} - \int_Y f_k dE_{n_{h,h}} \right|  +  \left| \int_Y f_k dE_{n_{h,h}} - \int_Y f_k dE_{h,h} \right| +  \left| \int_Y f_k dE_{h,h} - \int_Y f dE_{h,h} \right| $$
$$ = \left| \int_Y f dE_{n_{h,h}} - \int_Y f_k dE_{n_{h,h}} \right| + \lim_{m \rightarrow \infty} \left| \int_Y f_k dE_{n_{h,h}} - \int_Y f_k dE_{m_{h,h}} \right|$$
$$ +  \left| \int_Y f_k dE_{h,h} - \int_Y f dE_{h,h} \right| $$
$$ \leq \frac{\epsilon}{6} +  \lim_{m \rightarrow \infty} \left| \int_Y f_k dE_{n_{h,h}} - \int_Y f_k dE_{m_{h,h}} \right| + \frac{\epsilon}{6} $$
$$ = \frac{\epsilon}{3} + \lim_{m \rightarrow \infty} \left| \left \langle \left( \int_Y f_k dE_n - \int_Y f_k dE_m \right)h, h \right \rangle \right| \leq \frac{\epsilon}{3} + \frac{\epsilon}{6}  =  \frac{\epsilon}{2},$$

\noindent because of the inequality
$$\left| \left \langle \left( \int_Y f_k dE_n - \int_Y f_k dE_m \right)h, h \right \rangle \right| \leq \left| \left| \int_Y f_k dE_n - \int_Y f_k dE_m \right| \right| ||h||^2$$ 
$$\leq \rho(E_n,E_m) ||h||^2 = \rho(E_n,E_m).$$  

\noindent Hence for $n \geq N$ and $f \in \text{Lip}_1(Y)$ such that $f \geq 0$,
$$\left| \left| \int_Y f dE_n - \int_Y f dE \right| \right| \leq \frac{\epsilon}{2}.$$  

Now for arbitrary $f \in \text{Lip}_1(Y)$, decompose $f$ into its positive and negative parts; $f = f_{+} - f_{-}$.  Note that $f_{+}$ and $f_{-}$ are both non-negative elements of $\text{Lip}_1(Y)$.  Then for $n \geq N$
\begin{eqnarray*}
\left| \left| \int_Y f dE_n - \int_Y f dE \right| \right| & \leq & \left| \left| \int_Y f_{+} dE_n - \int_Y f_{+} dE \right| \right| + \left| \left| \int_Y f_{-} dE_n - \int_Y f_{-} dE \right| \right| \\
& \leq & \frac{\epsilon}{2} + \frac{\epsilon}{2} \\
& = & \epsilon,
\end{eqnarray*}

\noindent which shows that $\rho(E_n, E) \leq \epsilon$.  This is because the choice of $N$ is independent of the choice of $f \in \text{Lip}_1(Y)$.  

Lastly, we need to show $(3)$.  That is, we need to show that $E$ is a projection-valued measure.  Since we know that $E$ is a positive operator-valued measure, $E(\Delta)$ is self-adjoint for all $\Delta \in \mathscr{B}(Y)$.  Hence, to show that $E$ is a projection-valued measure, it is enough to show that $E(\Delta_1 \cap \Delta_2) = E(\Delta_1)E(\Delta_2) $ for $\Delta_1, \Delta_2 \in \mathscr{B}(Y)$.   To this end, let $C$ and $D$ be closed subsets of $Y$.  Let $\{f_n\}_{n=1}^{\infty}$ be a sequence of functions in $\text{Lip}(Y)$ such that $f_n \downarrow {\bf{1}}_C$ and such that $||f_n||_{\infty} \leq 1$ for all $n=1,2..$. For instance, one could let $f_n(x) = \max\{1 - nd(x,C), 0 \}$. Similarly, let $\{g_n\}_{n=1}^{\infty}$ be a sequence of functions in $\text{Lip}(Y)$ such that $g_n \downarrow {\bf{1}}_D$ and such that $||g_n||_{\infty} \leq 1$ for all $n=1,2..$.

For all $h \in \mathcal{H}$, by the dominated convergence theorem, 
$$\left \langle \left( \int_Y f_n dE \right) h, h \right \rangle \rightarrow \left \langle \left( \int_Y {\bf{1}}_C dE \right) h, h \right \rangle$$

\noindent as $n \rightarrow \infty$.  That is, for all $h \in \mathcal{H}$
$$\int_Y f_n - {\bf{1}}_C dE_{h,h} \downarrow 0$$ 

\noindent as $n \rightarrow \infty$.  Also, note that for all $n =1,2, ...$, 
$$\left| \left| \int_Y f_n - {\bf{1}}_C dE \right| \right| \leq || f_n - {\bf{1}}_C ||_{\infty} \leq 1.$$

\noindent Moreover, since $E$ is already known to be a positive operator-valued measure, and since $f_n - {\bf{1}}_C \geq 0$ for all $n=1,2,..$, the sequence of operators $\{\int_Y f_n - {\bf{1}}_C dE \}_{n=1}^{\infty}$ are positive operators.  By the above discussion, we see that the sequence of operators $\{\int_Y f_n - {\bf{1}}_C dE \}_{n=1}^{\infty}$ satisfies Lemma \ref{pos}.  This means that for all $h \in \mathcal{H}$
$$\lim_{n \rightarrow \infty} \left| \left| \left( \int_Y f_n - {\bf{1}}_C dE \right) h \right| \right| = 0,$$

\noindent which is equivalent to saying that

\begin{equation} \label{SOT} \lim_{n \rightarrow \infty} \left| \left| \left( \int_Y f_n dE \right)h - E(C)h \right| \right| = 0. \end{equation}  

\noindent Similarly, \begin{equation} \label{SOT2} \lim_{n \rightarrow \infty} \left| \left| \left( \int_Y g_n dE \right)h - E(D)h \right| \right| = 0. \end{equation}  

\noindent We now have the following claim.

\begin{cla} \label{mult} For all $n =1,2,...$, 
$$\left(\int_Y f_n dE \right) \left( \int_Y g_n dE \right) = \int_Y f_ng_n dE.$$

\end{cla} 

\noindent Proof of Claim: Choose some $n=1,2,...$.  Since $f_n \in \text{Lip}(Y)$ with $||f_n||_{\infty} \leq 1$, and since $g_n \in \text{Lip}(Y)$ with $||g_n||_{\infty} \leq 1$, $f_n g_n \in \text{Lip}(Y)$.  Next, since $E_m \rightarrow E$ in the $\rho$ metric, we have that $\int_Y f_n dE_m \rightarrow \int_Y f_n dE$, $\int_Y g_n dE_m \rightarrow \int_Y g_n dE$, and $\int_Y f_n g_n dE_m \rightarrow \int_Y f_n g_n dE$ as $m \rightarrow \infty$ where convergence is in the operator norm.  Moreover, since $E_m$ is a projection-valued measure, and since $f_n$ and $g_n$ are bounded, we have that $\left( \int_Y f_n dE_m \right) \left( \int_Y g_n dE_m \right) = \left( \int_Y f_n g_n dE_m \right)$ for all $m =1,2,..$.  Combining all of this data, we get that 
$$\int_Y f_n g_n dE_m = \left( \int_Y f_n dE_m \right) \left( \int_Y g_n dE_m \right) \rightarrow \left( \int_Y f_n dE \right) \left( \int_Y g_n dE \right), $$

\noindent and
$$\int_Y f_n g_n dE_m \rightarrow \int_Y f_n g_n dE,$$

\noindent as $m \rightarrow \infty$ which shows that $\int_Y f_n g_ndE =  \left( \int_Y f_n dE \right) \left( \int_Y g_n dE \right)$.  This completes the proof of the claim. 

We will now show that $E(C)E(D) = E(C \cap D)$ as an operator on $\mathcal{H}$.  Note that $f_n \downarrow {\bf{1}}_C$, $g_n \downarrow {\bf{1}}_D$, and moreover, $f_n g_n \downarrow {\bf{1}}_{C \cap D}$.  Hence for $h \in \mathcal{H}$, we also have that 
$$ \left \langle \left( \int_Y f_n g_n dE \right) h, h \right \rangle \rightarrow \left \langle \left( \int_Y {\bf{1}}_{C\cap D} dE \right) h, h \right \rangle,$$

\noindent  as $n \rightarrow \infty$.  Since $E$ is a positive operator-valued measure, we know that $E(C)$ is self adjoint.  Therefore, 
$$\langle E(C)E(D) h, h \rangle = \langle E(D)h, E(C)h \rangle = \lim_{n \rightarrow \infty} \left \langle \left( \int_Y g_n dE \right)h, \left( \int_Y f_n dE \right)h \right \rangle = $$
$$\lim_{n \rightarrow \infty} \left \langle  \left( \int_Y f_n g_n dE \right)h, h \right \rangle = \left \langle \left( \int_Y {\bf{1}}_{C \cap D} dE \right) h, h \right \rangle = \langle E(C \cap D)h, h \rangle,$$

\noindent where the second equality is by equations (\ref{SOT}) and (\ref{SOT2}), and the third equality is because of Claim \ref{mult}. 

Now let $\Delta_1, \Delta_2 \in \mathscr{B}(Y)$.  If $h \in \mathcal{H}$, note that $E_{h,h}$ is a tight measure. Hence, there exists a sequence of compact subsets $\{C_k\}_{k=1}^{\infty} \subseteq \mathscr{B}(Y)$ such that $E_{h,h}(C_k) \uparrow E_{h,h}(\Delta_1)$, and a sequence of compact subsets $\{D_k\}_{k=1}^{\infty} \subseteq \mathscr{B}(Y)$ such that $E_{h,h}(D_k) \uparrow E_{h,h}(\Delta_2)$.  Additionally, $E_{h,h}(C_k \cap D_k) \uparrow E_{h,h}(\Delta_1 \cap \Delta_2)$.  Note that
$$\langle (E(\Delta_1) - E(C_k))h, h \rangle \rightarrow 0,$$ 

\noindent as $k \rightarrow \infty$.  Next, note that since $C_k \subseteq \Delta_1$ for all $k = 1, 2, ...$, the operator $E(\Delta_1) - E(C_k)$ is a positive operator.  Moreover, $||E(\Delta_1) - E(C_k)|| \leq 2$ for all $k = 1,2,...$.  We can appeal to Lemma \ref{pos} to conclude that $\lim_{k \rightarrow \infty} ||(E(\Delta_1) - E(C_k))h|| = 0$, or equivalently, $\lim_{k \rightarrow \infty} ||E(\Delta_1)h - E(C_k)h|| = 0$.  Similarly, $\lim_{k \rightarrow \infty} ||E(\Delta_2)h - E(D_k)h|| = 0$.  Then
$$\langle E(\Delta_1)E(\Delta_2) h, h \rangle = \langle E(\Delta_2)h, E(\Delta_1)h \rangle = \lim_{k \rightarrow \infty} \langle E(D_k)h, E(C_k)h \rangle = $$
$$\lim_{k \rightarrow \infty} \langle E(C_k)E(D_k)h, h \rangle = \lim_{k \rightarrow \infty} \langle E(C_k \cap D_k)h, h \rangle = \langle E(\Delta_1 \cap \Delta_2)h, h \rangle,$$

\noindent where the fourth equality is because $C_k$ and $D_k$ are compact (in particular, closed).  Hence, $E$ is a projection-valued measure, and this completes the proof of part (3), and the theorem.  

\end{proof} 

\begin{cor} \cite{Davison3} \label{corollarycomplete} [Davison] The metric space $(S_0(Y), \rho)$ is complete, and $(P_0(Y), \rho)$ is a closed subset of $(S_0(Y), \rho)$.
\end{cor}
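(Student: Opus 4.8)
The plan is to obtain the completeness of $(S_0(Y),\rho)$ by re-running the argument of Theorem \ref{P_0complete} almost verbatim, and then to deduce that $P_0(Y)$ is closed in $S_0(Y)$ directly from the completeness of $(P_0(Y),\rho)$ already established there.

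First I would take a Cauchy sequence $\{A_n\}_{n=1}^{\infty}$ in $(S_0(Y),\rho)$ and construct a candidate limit $A$ exactly as in the proof of Theorem \ref{P_0complete}. For each $g,h \in \mathcal{H}$ the scalar measures $A_{n_{g,h}}$ have total mass controlled by $\|g\|\|h\|$ (with $A_{n_{h,h}}(Y) = \|h\|^2$ since $A_n(Y) = \text{id}_{\mathcal{H}}$), the integrals $\int_Y f\, dA_{n_{h,h}}$ are Cauchy for every $f \in \text{Lip}_b(Y)$ by the same estimate as in Claim \ref{Cauchy}, and Proposition \ref{keylemma} supplies weak limits $\mu_{h,h}$. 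Polarization then defines a bounded sesquilinear family $\mu_{g,h}$, so Theorem \ref{boundedsesqui} produces operators $A(\Delta)$ with $A_{g,h} = \mu_{g,h}$, and the cited Theorem 2.19 of \cite{Davison4} guarantees that $A$ is a positive operator-valued measure. The key observation is that parts $(1)$ and $(2)$ of the proof of Theorem \ref{P_0complete} --- namely that the limit satisfies the defining $S_0$ bound $|\int_Y f\, dA_{g,h}| \leq M_{f,A}\|g\|\|h\|$ for all $f \in \text{Lip}(Y)$, and that $A_n \to A$ in $\rho$ --- used only positivity of the $A_n$ together with the membership $A_n \in S_0(Y)$; they never invoked the projection property. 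Hence those two parts carry over word for word, giving $A \in S_0(Y)$ with $A_n \to A$, so $(S_0(Y),\rho)$ is complete.

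For the second assertion I would argue that $P_0(Y)$ is closed in $S_0(Y)$ simply because $(P_0(Y),\rho)$ is complete by Theorem \ref{P_0complete}: any sequence in $P_0(Y)$ that converges in $(S_0(Y),\rho)$ is Cauchy, hence converges in $(P_0(Y),\rho)$ to a limit lying in $P_0(Y)$, and uniqueness of limits in the metric space $(S_0(Y),\rho)$ forces this limit to coincide with the $S_0$-limit. Thus the $S_0$-limit lies in $P_0(Y)$, so $P_0(Y)$ is closed. There is no serious obstacle here; the only point requiring care is the bookkeeping of which steps of the Theorem \ref{P_0complete} argument depend on the projection property --- precisely part $(3)$, through Claim \ref{mult}, where the identity $(\int_Y f_n\, dE_m)(\int_Y g_n\, dE_m) = \int_Y f_n g_n\, dE_m$ relies on each $E_m$ being a projection-valued measure --- so that part, and only that part, is dropped in the $S_0$ setting.
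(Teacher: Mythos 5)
Your proposal is correct and takes essentially the same route as the paper: the paper's own proof of Corollary \ref{corollarycomplete} consists precisely of the remark that the argument of Theorem \ref{P_0complete} can be adapted (which amounts to dropping part $(3)$, the only step using the projection property) to give completeness of $(S_0(Y),\rho)$, and that closedness of $P_0(Y)$ in $S_0(Y)$ follows from the completeness of $(P_0(Y),\rho)$. Your write-up simply supplies the bookkeeping that the paper leaves implicit.
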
 

\begin{proof} The proof of the above theorem can be adapted to show $(S_0(Y), \rho)$ is complete. Moreover, the completeness of $(P_0(Y), \rho)$ implies that it is a closed subset of $(S_0(Y), \rho)$. 
\end{proof} 

\subsection{A Modified Generalized Kantorovich Metric} 

As mentioned above, one limitation of the Kantorovich metric, and the generalized Kantorovich metric, is that they are not finite when the underlying metric space is unbounded. We can restrict these metrics to a sub-collection of measures where they are finite, or we can introduce a modification on the these metrics as described below. 

Let $Q(Y)$ denote the collection of Borel probability measures on $Y$. In our previous paper, we define a modified Kantorovich metric, $MH$, on $Q(Y)$ defined as follows:  For $\mu, \nu \in Q(Y),$

\begin{equation} MH(\mu,\nu) = \sup \left\{ \left| \int_Y{f}d\mu - \int_Y{f}d\nu \right| : f  \in \text{Lip}_1(Y) \text{ and } ||f||_{\infty} \leq 1 \right\}. \end{equation} 

\noindent The condition $||f||_\infty \leq 1$ guarantees that $MH$ will be finite on $Q(Y)$. Using Proposition \ref{keylemma}, and Section 8.3 in Bogachev (see \cite{Bogachev}), we were able to deduce that the metric space $(Q(Y), MH)$ is complete. It is worth mentioning that this modified Kantorovich metric is not appropriate in applications to iterated function systems, and thus one must use the original definition as defined in equation (\ref{KantClass}). 

We will now modify the generalized Kantorovich metric.  Indeed, let $P(Y)$ denote the collection of projection-valued measures with respect to the pair $(Y, \mathcal{H})$.  

\begin{defi} \cite{Davison3} [Davison] Define $M\rho$ on $P(Y)$ by: 
$$M\rho(E,F) = \sup \left \{ \left| \left| \int_Y f dE - \int_Y f dF \right| \right| : f \in \text{Lip}_1(Y) \text{ and } ||f||_{\infty} \leq 1 \right \}$$

\noindent for $E,F \in P(Y)$. 

\end{defi}

Once again, the condition $||f||_{\infty} \leq 1$ guarantees that this metric will be finite on $P(Y)$.  

\begin{thm} \cite{Davison3} [Davison] The metric space $(P(Y), M\rho)$ is complete. 

\end{thm}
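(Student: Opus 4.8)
The plan is to transcribe the architecture of the proof of Theorem \ref{P_0complete} for $(P_0(Y),\rho)$, taking a Cauchy sequence $\{E_n\}_{n=1}^{\infty}$ in $(P(Y), M\rho)$, building a candidate limit $E$ from weak limits of the scalar measures $E_{n_{h,h}}$, and then verifying that $E$ is projection-valued and that $E_n \to E$ in $M\rho$. The crucial preliminary observation is a scaling argument. If $f \in \text{Lip}_b(Y)$ has Lipschitz constant $L$ and $\|f\|_{\infty} = B$, then for $T = \max\{L,B\}$ we have $\tfrac{f}{T} \in \text{Lip}_1(Y)$ with $\|\tfrac{f}{T}\|_{\infty} \leq 1$, so that
$$\left\| \int_Y f \, dE_n - \int_Y f \, dE_m \right\| = T \left\| \int_Y \tfrac{f}{T} \, dE_n - \int_Y \tfrac{f}{T} \, dE_m \right\| \leq T \, M\rho(E_n,E_m) \to 0.$$
Taking diagonal matrix elements, for every $h \in \mathcal{H}$ and every $f \in \text{Lip}_b(Y)$ the real sequence $\{\int_Y f \, dE_{n_{h,h}}\}_{n=1}^{\infty}$ is Cauchy. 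This is precisely the hypothesis of Proposition \ref{keylemma}, and it is the $M\rho$-analog of Claim \ref{Cauchy}.

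Next I would invoke Proposition \ref{keylemma}: since each $E_{n_{h,h}}$ is a positive Borel measure with $E_{n_{h,h}}(Y) = \|h\|^2$, there is a Borel measure $\mu_{h,h}$ with $\mu_{h,h}(Y) = \|h\|^2$ and $E_{n_{h,h}} \to \mu_{h,h}$ weakly. Using the polarization identities exactly as in the proof of Theorem \ref{P_0complete}, I would define $\mu_{g,h}$ for all $g,h$, inheriting sesquilinearity, conjugate symmetry $\overline{\mu_{g,h}} = \mu_{h,g}$, and total variation at most $\|g\|\|h\|$. The bounded sesquilinear form $[g,h] \mapsto \int_Y \mathbf{1}_{\Delta} \, d\mu_{g,h}$ (bound $\leq 1$) then yields, via Theorem \ref{boundedsesqui}, operators $E(\Delta)$ with $\|E(\Delta)\| \leq 1$, and $E \colon \mathscr{B}(Y) \to \mathcal{B}(\mathcal{H})$ is a positive operator-valued measure by the argument of Theorem 2.19 in \cite{Davison4}. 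A genuine simplification here is that the integrability side-condition defining $P_0(Y)$ is entirely absent: $P(Y)$ imposes no such constraint, so that entire verification step is dropped.

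Convergence $E_n \to E$ in $M\rho$ is also simpler than in the $\rho$ case, precisely because the competing test functions are bounded. For $f \in \text{Lip}_1(Y)$ with $\|f\|_{\infty} \leq 1$ and a unit vector $h$, one has $f \in C_b(Y)$, so weak convergence gives $\int_Y f \, dE_{h,h} = \lim_m \int_Y f \, dE_{m_{h,h}}$, whence
$$\left| \int_Y f \, dE_{n_{h,h}} - \int_Y f \, dE_{h,h} \right| = \lim_{m\to\infty} \left| \left\langle \left( \int_Y f \, dE_n - \int_Y f \, dE_m \right) h, h \right\rangle \right| \leq \limsup_{m\to\infty} M\rho(E_n,E_m).$$
Because $\int_Y f \, dE_n - \int_Y f \, dE$ is self-adjoint, its operator norm is the supremum of these diagonal matrix elements over unit $h$; taking the supremum over admissible $f$ gives $M\rho(E_n,E) \leq \limsup_m M\rho(E_n,E_m)$, which is arbitrarily small for large $n$ by the Cauchy hypothesis. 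Crucially, no cutoff functions $f_k = \min\{k,f\}$ and no positive/negative decomposition are needed, since the functions in play are already bounded and continuous.

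The main obstacle, exactly as in Theorem \ref{P_0complete}, is showing the limit is projection-valued, i.e. $E(\Delta_1 \cap \Delta_2) = E(\Delta_1)E(\Delta_2)$. I would reproduce that argument essentially verbatim: first treat closed sets $C,D$ via Lipschitz functions $f_n \downarrow \mathbf{1}_C$ and $g_n \downarrow \mathbf{1}_D$ with $\|f_n\|_{\infty}, \|g_n\|_{\infty} \leq 1$ (e.g. $f_n = \max\{1 - n\,d(\cdot,C),0\}$), use that each $E_m$ is projection-valued to obtain $(\int f_n \, dE_m)(\int g_n \, dE_m) = \int f_n g_n \, dE_m$ as in Claim \ref{mult}, pass $m \to \infty$ in operator norm to carry the product formula to $E$, and then apply Lemma \ref{pos} with dominated convergence to pass $n \to \infty$ and reach $E(C)E(D) = E(C \cap D)$; finally, tightness of each $E_{h,h}$ together with Lemma \ref{pos} promotes this from closed sets to arbitrary Borel sets. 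The one point needing care relative to the $\rho$-metric proof is that the operator-norm convergence $\int f_n \, dE_m \to \int f_n \, dE$ used in the analog of Claim \ref{mult} must now be derived from $M\rho$-convergence rather than $\rho$-convergence — but this is immediate from the scaling argument of the first paragraph, since $f_n$, $g_n$, and $f_n g_n$ all lie in $\text{Lip}_b(Y)$.
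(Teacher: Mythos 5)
Your proposal is correct and follows essentially the same route as the paper's proof: both transcribe the architecture of Theorem \ref{P_0complete}, observe that the $M\rho$-Cauchy hypothesis yields the Cauchy claim only for $\text{Lip}_b(Y)$ functions but that this is exactly what Proposition \ref{keylemma} requires, note that the approximating functions in the projection-valued step already lie in $\text{Lip}_b(Y)$ so that argument carries over, and prove $M\rho$-convergence directly from weak convergence of the diagonal matrix elements (no cutoff functions needed, since the test functions are bounded). Your added details — the explicit scaling $T = \max\{L,B\}$ and the remark that the operator-norm convergence needed in the analog of Claim \ref{mult} follows from $M\rho$-convergence — are points the paper leaves implicit, but they do not change the approach.
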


\begin{proof} The proof of this theorem follows the proof of Theorem \ref{P_0complete}, with several differences that we will point out.  Suppose that $\{E_n\}_{n=1}^{\infty}$ is a Cauchy sequence of elements in $(P(Y), M\rho)$.  We want to find an $E \in (P(Y), M\rho)$ such that $E_n \rightarrow E$ in the $M\rho$ metric.  Because $M\rho$ takes a supremum over $f \in \text{Lip}_1(Y)$ such that $||f||_{\infty} \leq 1$, we obtain a version of Claim \ref{Cauchy} only for $\text{Lip}_b(Y)$ functions.  However, this is not an impediment, because Proposition \ref{keylemma} only considers $\text{Lip}_b(Y)$ functions.  Hence, using the techniques of the proof of Theorem \ref{P_0complete}, we obtain a  positive operator-valued measure $E$ on $Y$.  The proof that $E$ is a projection-valued measure depends on the construction of a sequence $\{f_n\}_{n=1}^{\infty} \in \text{Lip}(Y)$, but one can see that actually this sequence of functions is contained in $\text{Lip}_b(Y)$.  Hence, the proof that $E$ is a projection-valued measure carries over to the $M\rho$ metric.  

Lastly, we need to show that $E_n \rightarrow E$ in the $M\rho$ metric. Let $\epsilon > 0$.  Choose an $N$ such that for $n,m \geq N$, $M\rho(E_n, E_m) \leq \epsilon$.  Let $f \in \text{Lip}_1(Y)$ with $||f||_{\infty} \leq 1$, and let $h \in \mathcal{H}$ with $||h|| = 1$.  If $n \geq N$
\begin{eqnarray*}
\left| \left \langle \left( \int_Y f dE_n - \int_Y f dE \right)h, h \right \rangle \right| & = & \left| \int_Y f dE_{n_{h,h}} - \int_Y f dE_{h,h} \right| \\
& = & \lim_{m \rightarrow \infty} \left| \int_Y f dE_{n_{h,h}} - \int_Y f dE_{m_{h,h}} \right|, \\
\end{eqnarray*}

\noindent where the last equality is because $f \in C_b(Y)$ and $E_{m_{h,h}}$ converges weakly to $E_{h,h}$.  Observe that for all $m \geq N$,
\begin{eqnarray*}
\left| \int_Y f dE_{n_{h,h}} - \int_Y f dE_{m_{h,h}} \right| & = & \left| \left \langle \left( \int_Y f dE_n - \int_Y f dE_m \right)h, h \right \rangle \right| \\
& \leq & M\rho(E_n, E_m) ||h||^2 \\
& \leq & \epsilon. 
\end{eqnarray*}

\noindent Therefore if $n \geq N$, $\left| \left| \int_Y f dE_n - \int_Y f dE \right| \right| \leq \epsilon.$  Since $N$ does not depend on the choice of $f$, $M\rho(E_n,E) \leq \epsilon$, and $(P(Y), M\rho)$ is complete.
\end{proof}

\begin{cor} \cite{Davison3} [Davison] Let $S(Y)$ be the collection of positive operator-valued measures with respect to $(Y, \mathcal{H})$. The metric space $(S(Y), M\rho)$ is complete, and \\  $(P(Y), M\rho)$ is a closed subset of $(S(Y), M\rho)$. 
\end{cor}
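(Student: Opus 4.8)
The plan is to treat the two assertions separately, following the template of Corollary \ref{corollarycomplete}. For the completeness of $(S(Y), M\rho)$, I would take an arbitrary Cauchy sequence $\{A_n\}_{n=1}^{\infty}$ in $(S(Y), M\rho)$ and run the construction from the proof of the preceding theorem essentially verbatim. First I would establish the bounded-Lipschitz version of Claim \ref{Cauchy}: for each $f \in \text{Lip}_b(Y)$ and each $h \in \mathcal{H}$, the sequence $\{\int_Y f \, dA_{n_{h,h}}\}_{n=1}^{\infty}$ is Cauchy, since after rescaling $f$ so that it lies in $\text{Lip}_1(Y)$ with sup-norm at most $1$, the difference is controlled by $M\rho(A_n, A_m)\|h\|^2$. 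Because $A_n(Y) = \text{id}_{\mathcal{H}}$, we have $A_{n_{h,h}}(Y) = \|h\|^2$ for every $n$, so Proposition \ref{keylemma} applies and yields a weak limit measure $\mu_{h,h}$ for each $h$. Polarizing exactly as in the proof of Theorem \ref{P_0complete} produces $\mu_{g,h}$ for all $g,h \in \mathcal{H}$, and the bounded sesquilinear form $[g,h] \mapsto \int_Y \mathbf{1}_\Delta \, d\mu_{g,h}$ determines, via Theorem \ref{boundedsesqui}, a positive operator-valued measure $A$ with $A_{g,h} = \mu_{g,h}$.

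The crucial point is that none of this construction uses the projection-valued hypothesis: it relies only on positivity and the normalization $A_n(Y) = \text{id}_{\mathcal{H}}$. Consequently the sole step of the preceding proof that I would omit is part $(3)$, the verification that the limit is projection-valued, which is irrelevant here since $S(Y)$ already consists of all positive operator-valued measures. The convergence argument $A_n \rightarrow A$ in the $M\rho$ metric then carries over unchanged: for $f \in \text{Lip}_1(Y)$ with $\|f\|_\infty \leq 1$ and $\|h\| = 1$, one rewrites $|\langle(\int_Y f \, dA_n - \int_Y f \, dA)h,h\rangle|$ as the limit over $m$ of $|\int_Y f \, dA_{n_{h,h}} - \int_Y f \, dA_{m_{h,h}}|$, bounds this by $M\rho(A_n, A_m) \leq \epsilon$ for $n, m \geq N$, and uses the independence of $N$ from $f$ to conclude that $M\rho(A_n, A) \leq \epsilon$.

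For the second assertion, that $(P(Y), M\rho)$ is a closed subset of $(S(Y), M\rho)$, I would invoke the general principle that a complete metric subspace of a metric space is closed. Since the preceding theorem establishes that $(P(Y), M\rho)$ is complete, it suffices to observe that if $\{E_n\}_{n=1}^{\infty} \subseteq P(Y)$ converges to some $A \in S(Y)$ in the $M\rho$ metric, then $\{E_n\}$ is Cauchy in the common metric, and hence converges to an element $E \in P(Y)$ by completeness of $(P(Y), M\rho)$; uniqueness of limits in the metric space $(S(Y), M\rho)$ then forces $A = E \in P(Y)$.

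The main obstacle, such as it is, lies not in any individual estimate but in confirming that the limit construction is genuinely independent of the projection-valued property, so that the proof of the preceding theorem really does specialize to produce a positive operator-valued measure without modification. Once that bookkeeping is verified, both claims follow immediately.
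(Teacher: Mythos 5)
Your proposal is correct and takes essentially the same route as the paper: the paper's own proof is a two-line remark stating that the preceding theorem's argument adapts to positive operator-valued measures (i.e., one runs the same construction and simply omits the projection-valued verification) and that completeness of $(P(Y), M\rho)$ implies it is closed in $(S(Y), M\rho)$. Your write-up is a faithful, fleshed-out version of exactly that adaptation, including the standard fact that a complete subspace of a metric space is closed.
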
 

\begin{proof} The proof of the above theorem can be adapted to the case of positive operator-valued measures. Moreover, the completeness of $(P(Y), M\rho)$ implies that it is a closed subset of $(S(Y), M\rho)$.
\end{proof} 

\subsection{WOT-weak Topology} 

For this sub-section, assume that $(Y,d)$ is compact. Thus, $P_0(Y) = P(Y)$ and $S_0(Y) = S(Y)$. In the introduction, we mentioned that the metric space $(P(Y), \rho)$ is not compact, and therefore, $(S(Y), \rho)$ is also not compact.  In this section, we will consider a topology on $S(Y)$ that is weaker than the topology induced by the $\rho$ metric, which we call the WOT-weak topology.  We will show that the WOT-weak topology on $S(Y)$ is compact, by directly generalizing the proof in the classical setting that $(Q(Y), H)$ is compact.  Importantly, we note that this fact has been previously shown by S. Ali (see \cite{Ali}), using another approach. As such, we also attribute Corollary \ref{WOTcompact} to Ali.  We remark that this section is a deviation from the main trajectory of this paper, which resumes next section, but we believe the results in this section provide some worthwhile insight into another topology on $S(Y)$. 

\begin{defi} Let $\mathcal{H}$ be a Hilbert space.  The weak operator topology (WOT) on $\mathcal{B}(\mathcal{H})$ is the locally convex topology defined by the semi norms $\{p_{h,k}: h, k \in \mathcal{H}\}$ where $p_{h,k} = |\langle Ah,k \rangle|$.  Accordingly, a net of operators $\{L_i\}_{i \in I} \subseteq \mathcal{B}(\mathcal{H})$ converges to an operator $L \in \mathcal{B}(\mathcal{H})$ in the weak operator topology if $\langle L_ih, k \rangle \rightarrow \langle Lh, k \rangle$ for all $h , k \in \mathcal{H}$. 
\end{defi} 

\begin{thm} \cite{Conway}[Proposition IX.5.5 in Conway] \label{wotcompact} If $M > 0$, the subset of $\{L \in \mathcal{B}(\mathcal{H}) : ||L|| \leq M\} \subseteq \mathcal{B}(\mathcal{H})$ is compact in the weak operator topology.
\end{thm}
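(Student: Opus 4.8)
The plan is to realize $B_M = \{L \in \mathcal{B}(\mathcal{H}) : \|L\| \leq M\}$ as a closed subspace of a product of compact disks, so that compactness follows from Tychonoff's theorem. First I would define, for each pair $(h,k) \in \mathcal{H} \times \mathcal{H}$, the closed disk $D_{h,k} = \{z \in \mathbb{C} : |z| \leq M\|h\|\|k\|\}$, which is compact in $\mathbb{C}$, and form the product space $P = \prod_{(h,k)} D_{h,k}$ with the product topology; by Tychonoff's theorem, $P$ is compact. Then I would introduce the evaluation map $\Phi : B_M \to P$ given by $\Phi(L) = (\langle Lh, k \rangle)_{(h,k)}$, which is well defined since $|\langle Lh, k\rangle| \leq \|L\|\|h\|\|k\| \leq M\|h\|\|k\|$ by Cauchy--Schwarz.

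Next I would observe that $\Phi$ is a homeomorphism onto its image. Injectivity is immediate, since an operator is determined by the scalars $\langle Lh, k\rangle$. The essential point is that the weak operator topology on $B_M$ is exactly the initial topology generated by the functionals $L \mapsto \langle Lh, k\rangle$, while the product topology on $P$ is the initial topology generated by the coordinate projections; hence $\Phi$ pulls the subspace topology on $\Phi(B_M) \subseteq P$ back to the WOT on $B_M$. It therefore suffices to prove that $\Phi(B_M)$ is closed in $P$, since a closed subset of the compact space $P$ is compact, and this compactness transfers back to $B_M$ through the homeomorphism.

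The main work, and the step I expect to be the crux, is showing that $\Phi(B_M)$ is closed. Suppose $c = (c_{h,k})_{(h,k)} \in P$ lies in the closure of $\Phi(B_M)$. I would verify that the assignment $u(h,k) = c_{h,k}$ is a bounded sesquilinear form with bound $M$. Each defining identity for sesquilinearity---additivity in each variable, homogeneity $u(\alpha h, k) = \alpha u(h,k)$, and conjugate homogeneity $u(h, \beta k) = \bar{\beta}\, u(h,k)$---is an equation involving only finitely many coordinates, and since each such identity holds on all of $\Phi(B_M)$ and the coordinate projections are continuous, each identity defines a closed subset of $P$; the limit point $c$ therefore inherits all of them. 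The bound $|u(h,k)| \leq M\|h\|\|k\|$ holds because $c_{h,k} \in D_{h,k}$. Having established that $u$ is a bounded sesquilinear form with bound $M$, I would invoke Theorem \ref{boundedsesqui} to produce a unique $L \in \mathcal{B}(\mathcal{H})$ with $\langle Lh, k\rangle = c_{h,k}$ for all $h,k$ and $\|L\| \leq M$. Then $L \in B_M$ and $\Phi(L) = c$, so $c \in \Phi(B_M)$, which proves closedness.

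Combining these steps, $\Phi(B_M)$ is a closed subset of the compact space $P$, hence compact, and since $\Phi$ is a homeomorphism onto its image, $B_M$ is compact in the weak operator topology. The only genuinely nontrivial ingredient beyond the topological packaging is the passage from a limiting family of scalars back to an actual operator, which is supplied precisely by the bounded-sesquilinear-form representation of Theorem \ref{boundedsesqui}; everything else reduces to the observation that sesquilinearity and the norm bound are closed conditions in the product topology.
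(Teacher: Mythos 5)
Your proposal is correct, and it is essentially the proof the paper points to: the paper does not prove this theorem itself but cites Conway's Proposition IX.5.5, whose argument is precisely your Tychonoff embedding of the ball into a product of disks $\prod_{(h,k)} D_{h,k}$, with closedness of the image established by recovering an operator from a bounded sesquilinear form (the paper's Theorem \ref{boundedsesqui}). No gaps to report.
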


Equip $\mathcal{B}(\mathcal{H})$ with the weak operator topology.  For each $f \in C(Y)$, define a mapping $\hat{f}: S(Y) \rightarrow \mathcal{B}(\mathcal{H})$ by $A \mapsto \int_Y f dA$.  We note here that we will use the following equivalent notations:
$$\hat{f}(A) = \int_Y f dA = A(f).$$

Let the WOT-weak topology be the weakest topology on $S(Y)$ that makes the collection of maps $\{\hat{f}: f \in C_{\mathbb{R}}(Y)\}$ continuous where we put the weak operator topology on $\mathcal{B}(\mathcal{H})$, and where $C_{\mathbb{R}}(Y)$ denotes the collection of real-valued continuous functions on $Y$.  In other words, a net of positive operator-valued measures $\{A_i\}_{i \in I} \subseteq S(Y)$ converges to a positive operator-valued measure $A \in S(Y)$, if for all $f \in C_{\mathbb{R}}(Y)$, $\hat{f}(A_i)$ converges to $\hat{f}(A)$ in the weak operator topology.  Since the weak operator topology is a weaker topology than the operator norm topology on
$\mathcal{B}(\mathcal{H})$, the WOT-weak topology is a weaker topology than the topology induced by the $\rho$ metric on $S(Y)$.  

\begin{thm}\label{sequential} \cite{Davison3} [Davison] The WOT-weak topology is sequentially compact. 
\end{thm}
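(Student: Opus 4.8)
The plan is to mimic the classical argument that $(Q(Y),H)$ is compact, replacing the scalar integrals $\int_Y f\, d\mu_n$ by the operators $\hat{f}(A_n) = \int_Y f\, dA_n$ and replacing convergence of real numbers by convergence of the matrix entries $\langle \hat{f}(A_n) g, h\rangle$. Let $\{A_n\}_{n=1}^\infty$ be a sequence in $S(Y)$. The first observation is uniform boundedness: for real $f$ the operator $\int_Y f\, dA_n$ is self-adjoint, and since $A_n(Y) = \text{id}_{\mathcal{H}}$ forces $-\|f\|_{\infty}\,\text{id}_{\mathcal{H}} \leq \int_Y f\, dA_n \leq \|f\|_{\infty}\,\text{id}_{\mathcal{H}}$, we get $\|\int_Y f\, dA_n\| \leq \|f\|_{\infty}$. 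Hence $|\langle \hat{f}(A_n) g, h\rangle| \leq \|f\|_{\infty}\|g\|\|h\|$ for all $g,h \in \mathcal{H}$, so every entry sequence is bounded.

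Next I would set up a diagonalization. Since $Y$ is a compact metric space, $C_{\mathbb{R}}(Y)$ is separable; fix a countable dense set $\{f_k\}$. Taking $\mathcal{H}$ separable (as in the intended application $\mathcal{H} = L^2(X,\mu)$), fix a countable dense set $\{e_i\}\subseteq \mathcal{H}$. The countable family of bounded scalar sequences $\{\langle \hat{f_k}(A_n) e_i, e_j\rangle\}_n$, indexed by $(k,i,j)$, admits by a Cantor diagonal argument a single subsequence $\{A_{n_l}\}$ along which each of them converges. A standard $\epsilon/3$ estimate, using density of $\{f_k\}$ in $C_{\mathbb{R}}(Y)$, density of $\{e_i\}$ in $\mathcal{H}$, and the uniform bound above, then upgrades this to convergence of $\langle \hat{f}(A_{n_l}) g, h\rangle$ for every $f \in C_{\mathbb{R}}(Y)$ and every $g,h \in \mathcal{H}$. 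Write $u(f;g,h)$ for the limit.

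It then remains to realize $u$ as the WOT-weak limit of an element $A \in S(Y)$. For fixed $f$, the form $(g,h) \mapsto u(f;g,h)$ is sesquilinear with bound $\|f\|_{\infty}$, so Theorem \ref{boundedsesqui} produces $\hat{f}(A)$ with $\langle \hat{f}(A) g, h\rangle = u(f;g,h)$. For fixed $g,h$, the map $f \mapsto u(f;g,h)$ is a bounded linear functional on $C_{\mathbb{R}}(Y)$ of norm at most $\|g\|\|h\|$, so the Riesz representation theorem yields a signed Borel measure $\mu_{g,h}$ with $u(f;g,h) = \int_Y f\, d\mu_{g,h}$. Exactly as in the construction inside Theorem \ref{P_0complete}, I would define $A(\Delta)$ to be the operator associated by Theorem \ref{boundedsesqui} to the bounded sesquilinear form $(g,h) \mapsto \mu_{g,h}(\Delta)$, and verify that $A$ is a positive operator-valued measure: positivity follows because $\hat{f}(A_n) \geq 0$ for $f \geq 0$, forcing each $\mu_{g,g}$ to be a positive measure; $A(Y) = \text{id}_{\mathcal{H}}$ follows from $\int_Y 1\, dA_n = A_n(Y) = \text{id}_{\mathcal{H}}$; and weak countable additivity is inherited directly from that of each scalar measure $\mu_{g,h}$. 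Since $A_{g,h} = \mu_{g,h}$ by construction, $\langle \hat{f}(A_{n_l}) g, h\rangle \to u(f;g,h) = \int_Y f\, d\mu_{g,h} = \langle \hat{f}(A) g, h\rangle$ for all $f,g,h$, which is precisely WOT-weak convergence $A_{n_l} \to A$.

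The hard part will be the double diagonalization: unlike the scalar case, convergence in the weak operator topology must be arranged simultaneously over a dense set of test functions and a dense set of pairs of Hilbert space vectors, and this is where separability of $\mathcal{H}$ enters, since the weak operator topology is metrizable on norm-bounded sets precisely when $\mathcal{H}$ is separable. The remaining delicate point is confirming that the limit object $A$ is a genuine countably additive positive operator-valued measure rather than merely a positive linear map $f \mapsto \hat{f}(A)$; this is handled exactly as in the completeness proof of Theorem \ref{P_0complete}, with the Riesz measures $\mu_{g,h}$ supplying the countable additivity at no extra cost.
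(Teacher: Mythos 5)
Your proposal is correct, and it shares the paper's overall skeleton — diagonalize, upgrade by an $\epsilon/3$ density argument, then rebuild the limit via Riesz representation and Theorem \ref{boundedsesqui}, deferring the verification that the limit is a genuine positive operator-valued measure to the construction used in Theorem \ref{P_0complete} (Theorem 2.19 of \cite{Davison4}) — but your subsequence extraction is genuinely different. The paper diagonalizes only over a countable dense family $\{f_i\} \subseteq C(Y)$: for each $f_i$ it extracts a subsequence of the \emph{operators} $A_n(f_i)$ convergent in the weak operator topology, invoking Theorem \ref{wotcompact} (WOT-compactness of norm balls in $\mathcal{B}(\mathcal{H})$); its upgrade step then only has to approximate functions, not vectors. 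You instead diagonalize over the countable family of \emph{scalar} sequences $\langle \hat{f}_k(A_n)e_i, e_j \rangle$, indexed by a dense set of functions and pairs of vectors from a countable dense subset of $\mathcal{H}$, so your only extraction tool is Bolzano--Weierstrass for bounded scalar sequences; the price is that your $\epsilon$-argument must approximate in both variables and that you assume $\mathcal{H}$ separable explicitly. That hypothesis is not actually a loss of generality: the paper's step of pulling a convergent subsequence out of a WOT-compact ball is a \emph{sequential} compactness claim, and compactness gives sequential compactness only when the ball is metrizable in the weak operator topology, which holds exactly when $\mathcal{H}$ is separable. For non-separable $\mathcal{H}$ the norm ball is WOT-compact but not WOT-sequentially compact (consider diagonal projections $D_n$ on $\ell^2(\{0,1\}^{\mathbb{N}})$ with $D_n e_{\sigma} = \sigma_n e_{\sigma}$: no subsequence converges weakly), and by packaging such projections into projection-valued measures one sees the theorem itself fails there. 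So your version makes explicit a hypothesis the paper uses silently, consistent with the separability assumption that does appear in the subsequent Proposition and in Corollary \ref{WOTcompact}; in exchange, the paper's route is shorter and needs no dense subset of $\mathcal{H}$ once one grants the compactness theorem.
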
 

\begin{proof} Let $\{A_n\}_{n=1}^{\infty}$ be a sequence in $S(Y)$.  Since $Y$ is compact, $C(Y)$ is separable, and therefore choose a countable dense subset of functions $\{f_i\}_{i=1}^{\infty}\subseteq C(Y)$.  Consider the bounded operators $\{A_n(f_1)\}_{n=1}^{\infty}$.  Note that for all $n = 1, ...$, $||A_n(f_1)|| \leq ||f_1||_{\infty}$.  Since the subset $\{L \in \mathcal{B}(\mathcal{H}) : ||L|| \leq ||f_1||_{\infty} \} \subseteq \mathcal{B}(\mathcal{H})$ is compact in the weak operator topology (see Theorem \ref{wotcompact}), the sequence $\{A_n(f_1)\}_{n=1}^{\infty}$ admits a convergent subsequence in the weak operator topology, which we call $\{A_n^1(f_1)\}_{n=1}^{\infty}$.  Consider the sequence of bounded operators $\{A_n^1(f_2)\}_{n=1}^{\infty}$.  Since for all $n = 1, ... \infty$, $||A_n^1(f_2)|| \leq ||f_2||_{\infty}$, the subsequence $\{A_n^1(f_2)\}_{n=1}^{\infty}$ admits a further subsequence $\{A_n^2(f_2)\}_{n=1}^{\infty}$ which is convergent in the weak operator topology.  If we continue the process, we obtain for each $i =1, ... \infty$ a sequence $\{A_n^i(f_i)\}_{n=1}^{\infty}$ which is convergent in the weak operator topology, such that $\{A_n^{i+1}\}_{n=1}^{\infty}$ is a subsequence of $\{A_n^i\}_{n=1}^{\infty}$.  Now choose some $f_i \in C(Y)$ for $1 \leq i \leq \infty$, and consider the diagonal sequence, $\{A_n^n(f_i)\}_{n=1}^{\infty}$.  For $n \geq i$, $\{A_n^n(f_i)\}$ is a subsequence of $\{A_n^i(f_i)\}$, and since $\{A_n^i(f_i)\}_{n=i}^{\infty}$ is convergent in the weak operator topology, so is $\{A_n^n(f_i)\}_{n=i}^{\infty}$, which implies that $\{A_n^n(f_i)\}_{n=1}^{\infty}$ is convergent in the weak operator topology. 

Let $f \in C(Y)$ and $g, h \in \mathcal{H}$.   We will show that the sequence $\{ \langle A_n^n(f)g, h \rangle \}_{n=1}^{\infty}$ is Cauchy in $\mathbb{C}$.  If $g = 0$ or $h =0$, then the result is clear because every term in the sequence is zero.  Therefore, suppose that $g \neq 0$ and $h \neq 0$.  Choose $f_i \in C(Y)$ such that 
$$||f-f_i||_{\infty} \leq \frac{\epsilon}{3||h||||g||}.$$

\noindent By above, we know that  $\{A_n^n(f_i)\}_{n=1}^{\infty}$ is convergent in the weak operator topology.  Therefore, there exists an $N$ such that for $m,n \geq N$, $| \langle A_n^n(f_i)g, h \rangle - \langle A_m^m(f_i)g, h \rangle | \leq \frac{\epsilon}{3}.$  Thus, if $m,n \geq N$ 
\begin{eqnarray*}
| \langle A_n^n(f)g, h \rangle - \langle A_m^m(f)g, h \rangle | & \leq & | \langle A_n^n(f)g, h \rangle - \langle A_n^n(f_i)g, h \rangle | \\
& + & | \langle A_n^n(f_i)g, h \rangle - \langle A_m^m(f_i)g, h \rangle | \\
& + & | \langle A_m^m(f_i)g, h \rangle - \langle A_m^m(f)g, h \rangle | \\
& \leq & \int_Y |f-f_i| d{A_n^{n}}_{g,h} + \frac{\epsilon}{3} + \int_Y |f-f_i| d{A_m^{m}}_{g,h} \\
& \leq & \epsilon. 
\end{eqnarray*} 

\noindent Hence, for all $f \in C(Y)$ and $g, h \in \mathcal{H}$, the sequence 
$$\{ \langle A_n^n(f)g, h \rangle \}_{n=1}^{\infty} = \left\{ \int_Y f d{A_n^{n}}_{g,h}\right\}_{n=1}^{\infty}$$ 

\noindent is Cauchy in $\mathbb{C}$.  Define $\mu_{g,h}: C(Y) \rightarrow \mathbb{C}$ by $f \mapsto \lim_{n \rightarrow \infty} \int_Y f d{A_n^{n}}_{g,h}$.  Observe that $\mu_{g,h}$ is a bounded linear functional, and hence $\mu_{g,h}$ is a measure.  

Using a similar approach as in the proof of Theorem 2.19 from our previous paper (see \cite{Davison4}), we note that the map $[g,h] \mapsto \mu_{g,h}$ is sesquilinear, and accordingly, there exists a positive operator-valued measure $A \in S(Y)$ such that $\langle A(\Delta)g,h \rangle = \mu_{g,h}(\Delta) $ for all $\Delta \in \mathscr{B}(Y)$.  

It remains to show that $\{A_n^{n}\}_{n=1}^{\infty}$ converges to $A$ in the weak operator topology. Choose $f \in C_{\mathbb{R}}(Y)$, and $g, h \in \mathcal{H}$.  By construction, 
$$\langle A_n^{n}(f)g, h \rangle \rightarrow \langle A(f)g, h \rangle.$$

\noindent Hence, $\{A_n\}_{n=1}^{\infty}$ admits a convergent subsequence $\{A_n^n\}_{n=1}^{\infty}$ in the WOT-weak topology, which completes the proof. 

\end{proof}

\begin{prop} \cite{Davison3} [Davison] Let $\mathcal{H}$ be a separable Hilbert space.  The WOT-weak topology on $S(Y)$ is first countable.
\end{prop}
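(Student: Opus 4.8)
The plan is to exhibit, at each point $A \in S(Y)$, an explicit countable neighborhood basis assembled from countable dense subsets of $C_{\mathbb{R}}(Y)$ and of $\mathcal{H}$. Recall that the WOT-weak topology is the initial topology induced by the maps $A \mapsto \langle A(f)g, h \rangle$ as $f$ ranges over $C_{\mathbb{R}}(Y)$ and $g,h$ range over $\mathcal{H}$. Consequently a subbasis of neighborhoods of $A$ is given by the sets
$$V(f,g,h,\epsilon) = \left\{ B \in S(Y) : \left| \left\langle (B(f) - A(f))g, h \right\rangle \right| < \epsilon \right\},$$
and every neighborhood of $A$ contains a finite intersection of such sets.

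Since $Y$ is compact, $C_{\mathbb{R}}(Y)$ is separable (as already noted in the proof of Theorem \ref{sequential}); fix a countable dense subset $\{f_i\}_{i=1}^{\infty}$. Since $\mathcal{H}$ is separable, fix a countable dense subset $\{e_k\}_{k=1}^{\infty}$. I then propose as the candidate basis at $A$ the collection of all finite intersections of the sets
$$U(i,k,l,q) = \left\{ B \in S(Y) : \left| \left\langle (B(f_i) - A(f_i))e_k, e_l \right\rangle \right| < q \right\}, \quad i,k,l \in \mathbb{N},\ q \in \mathbb{Q}^+.$$
This collection is countable, so it remains only to verify that it is a neighborhood basis, for which it suffices to show that every subbasic set $V(f,g,h,\epsilon)$ contains some $U(i,k,l,q)$.

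The key estimate is the uniform operator-norm bound $\left\| \int_Y \varphi \, dB \right\| \leq \|\varphi\|_{\infty}$, valid for every $B \in S(Y)$ and every $\varphi \in C_{\mathbb{R}}(Y)$ because $B(Y) = \text{id}_{\mathcal{H}}$ and $B$ is positive. Telescoping $\langle (B(f) - A(f))g, h \rangle$ through the intermediate quantity $\langle (B(f_i) - A(f_i))e_k, e_l \rangle$, I bound the difference by terms of the form $\|f - f_i\|_{\infty}\|g\|\|h\|$ (from replacing $f$ by $f_i$ inside both $B$ and $A$), terms of the form $\|f_i\|_{\infty}\|g - e_k\|\|h\|$ and $\|f_i\|_{\infty}\|e_k\|\|h - e_l\|$ (from replacing $g,h$ by $e_k, e_l$), and the middle term $\left| \langle (B(f_i) - A(f_i))e_k, e_l \rangle \right| < q$. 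Crucially, every one of these bounds is independent of $B$. Choosing $f_i$ close to $f$, $e_k$ close to $g$, $e_l$ close to $h$, and $q$ a small enough rational, I can force the total below $\epsilon$, which gives $U(i,k,l,q) \subseteq V(f,g,h,\epsilon)$.

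The main obstacle, and the precise reason separability of $\mathcal{H}$ is required in the hypothesis, is guaranteeing that the replacement of $g$ and $h$ by dense approximants is valid uniformly across all $B \in S(Y)$ at once; without the uniform bound $\|B(f_i)\| \leq \|f_i\|_{\infty}$ this step would fail to close. Once that uniform control is secured, the rest is routine: the finite intersections of the $U(i,k,l,q)$ form a countable family, each neighborhood of $A$ contains a member of it, and hence the WOT-weak topology is first countable at every point.
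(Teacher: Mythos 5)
Your proof is correct, and it rests on the same two pillars as the paper's argument --- separability of $C_{\mathbb{R}}(Y)$ and of $\mathcal{H}$, combined with the uniform bound $\left\| \int_Y \varphi \, dB \right\| \leq \|\varphi\|_{\infty}$ valid for all $B \in S(Y)$ simultaneously --- but the architecture is genuinely different. The paper never verifies a neighborhood basis directly: it introduces an auxiliary topology $\xi$, generated by sets of the form $\left\{ B : \left| \langle B(f_i)h_j, h_l \rangle - \langle A(f_i)h_j, h_l \rangle \right| < \tfrac{1}{n} \right\}$ with $f_i$ ranging over a countable dense subset of $C_{\mathbb{R}}(Y)$ and $h_j, h_l$ over a countable \emph{orthonormal basis}, notes that $\xi$ is first countable, and then proves that $\xi$ coincides with the WOT-weak topology: one inclusion because each $\xi$-subbasic set is $\hat{f_i}^{-1}$ of a WOT-open set, the other by checking that every $\hat{f}$ is sequentially $\xi$-continuous, where the passage from convergence against basis vectors to full WOT convergence is delegated to Proposition IX.1.3 in \cite{Conway} (a uniformly bounded sequence of operators converges WOT once it converges against a dense set of vectors). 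Your telescoping estimate inlines precisely that uniform-boundedness-plus-density argument, carried out at a fixed $A$ with dense vectors $e_k, e_l$ in place of basis vectors; as a result you bypass the auxiliary topology, the sequential-continuity detour, and the external citation, which makes the argument leaner and self-contained. What the paper's longer route yields is the explicit global identification of the WOT-weak topology with the countably generated topology $\xi$, a statement slightly stronger than first countability, although only first countability is actually used in Corollary \ref{WOTcompact}. Two small corrections to your write-up: the replacement of $g$ by $e_k$ produces the bound $\|B(f_i) - A(f_i)\| \, \|g - e_k\| \, \|h\| \leq 2\|f_i\|_{\infty} \|g - e_k\| \|h\|$, so your stated factor $\|f_i\|_{\infty}$ misses a harmless constant $2$ (likewise for the $h$-replacement); and separability of $\mathcal{H}$ is what furnishes the countable family $\{e_k\}$ in the first place --- the uniformity over $B$ comes from positivity together with $B(Y) = \mathrm{id}_{\mathcal{H}}$, not from separability itself.
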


\begin{proof} Since $\mathcal{H}$ is a separable Hilbert space, let $O = \{h_j: j =1, ..., \infty \}$ be a countable orthonormal basis in $\mathcal{H}$.  Since $C_{\mathbb{R}}(Y)$ is separable, let $P$ be a countable dense subset of $C_{\mathbb{R}}(Y)$.  

Let $A \in S(Y)$, $f_1, ..., f_k \in P$, and $h_j, h_l \in O$.  For $n \in \mathbb{N} = \{1,2, ...\}$, consider the following subset of $S(Y)$: 
$$\{ B \in S(Y): | \langle B(f_i)h_j,h_l \rangle - \langle A(f_i)h_j, h_l \rangle | < \frac{1}{n}\text{ for all } i = 1,..., k\}.$$ 

\noindent Consider the collection of all finite intersections of subsets of $S(Y)$ of the above form where $A \in S(Y)$, $f_1, ..., f_k \in P$, $h_j, h_l \in O$, $n \in \mathbb{N}$ are all arbitrary.  This forms a basis for a topology on $S(Y)$ which is first countable, and let this topology be denoted $\xi$. 
 
 We claim that the the $\xi$ topology and the WOT-weak topology coincide.  To this end, put the weak operator topology on $\mathcal{B}(\mathcal{H})$, and  let $f \in C_{\mathbb{R}}(Y)$.  We will show that the previously defined map $\hat{f}:S(Y) \rightarrow \mathcal{B}(\mathcal{H})$ is continuous with respect to the $\xi$ topology.  Since the WOT-weak topology is the weakest topology making all of the  maps of the form $\{\hat{f}: f \in C_{\mathbb{R}}(Y)\}$ continuous, we will have shown that the WOT-weak topology is weaker than the $\xi$ topology. 
 
 Since the $\xi$ topology is first countable, it can be defined by sequences.  Therefore, suppose $\{A_n\}_{n=1}^{\infty} \subseteq S(Y)$ converges in the $\xi$ topology to $A \in S(Y)$.  We need to show that $\hat{f}(A_n) \rightarrow \hat{f}(A)$ in the weak operator topology.  Note that for all $n$ 
  $$\left| \left| \int f dA_n \right| \right| \leq ||f||_{\infty}$$

\noindent and hence
$$\sup_{n=1,..., \infty} \left| \left| \int f dA_n \right| \right| < \infty.$$

\noindent Therefore, by Proposition IX.1.3 in \cite{Conway}, it is enough to show that $\lim_{n \rightarrow \infty} \langle A_n(f)h_j, h_l  \rangle = \langle A(f)h_j, h_l \rangle$ for all $h_j, h_l \in O$.  Accordingly, let  $h_j, h_l \in O$, and let $\epsilon > 0$.  Choose $g \in P$ such that $||f - g||_{\infty} \leq \frac{\epsilon}{3||h_j||||h_l||}$, and choose $s>0$ such that $\frac{1}{s} \leq \frac{\epsilon}{3}$.  Consider
$$\mathcal{O} = \left \{ B \in S(Y): | \langle B(g)h_j,h_l \rangle - \langle A(g)h_j, h_l \rangle | < \frac{1}{s}\right \}.$$

\noindent Since $A_n \rightarrow A$ in the $\xi$ topology, there exists an $N$ such that for $n \geq N$, $A_n \in \mathcal{O}$.  For $n \geq N$
\begin{eqnarray*} 
| \langle A_n(f)h_j,h_l \rangle - \langle A(f)h_j, h_l \rangle | & \leq & | \langle A_n(f)h_j,h_l \rangle - \langle A(g)h_j, h_l \rangle | \\
& + & | \langle A_n(g)h_j,h_l \rangle - \langle A(g)h_j, h_l \rangle | \\
& + &  | \langle A(g)h_j,h_l \rangle - \langle A(f)h_j, h_l \rangle | \\ 
& \leq & ||f - g||_{\infty} ||h_j||||h_l|| + \frac{1}{s} + ||f-g||_{\infty}  ||h_j||||h_l|| \\
& \leq & \epsilon.
\end{eqnarray*}

\noindent Hence, $A_n(f) \rightarrow A(f)$ in the weak operator topology.  

Let $A \in S(Y)$ and let $\mathcal{W} = \{ B \in S(Y): | \langle B(f_i)h_j,h_l \rangle - \langle A(f_i)h_j, h_l \rangle | < \frac{1}{n}\text{ for all } i = 1,..., k\}$ be an arbitrary sub-basis element of the the $\xi$ topology.  We need to show that $\mathcal{W}$ is open in the WOT-weak topology.  Define $\mathcal{O}_i = \hat{f_i}^{-1}( \{ L \in \mathcal{B}(\mathcal{H}): | \langle Lh_j,h_l \rangle - \langle A(f_i)h_j, h_l \rangle | < \frac{1}{n}\})$.  Since the set  $\{ L \in \mathcal{B}(\mathcal{H}): | \langle Lh_j,h_l \rangle - \langle A(f_i)h_j, h_l \rangle | < \frac{1}{n}\}$ is open in the weak operator topology, $\mathcal{O}_i$ is open in the WOT-weak topology.  Notice that $\mathcal{O}_i = \{ B \in S(Y): | \langle B(f_i)h_j,h_l \rangle - \langle A(f_i)h_j, h_l \rangle | < \frac{1}{n}\}$.  Now observe that $\mathcal{W} = \bigcap_{i=1}^{k} \mathcal{O}_i$, which is an open element in the WOT-weak topology, because each $\mathcal{O}_i$ is open in the WOT-weak topology.  Hence, the two topologies coincide.  Since the $\xi$ topology is first countable, the WOT-weak topology is first countable as well. 

\end{proof} 

\begin{cor} \label{WOTcompact} \cite{Ali} \cite{Davison3} [Ali, Davison]  Let $\mathcal{H}$ be a separable Hilbert space.  The WOT-weak topology on $S(Y)$  is compact. 
\end{cor}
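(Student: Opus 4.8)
The plan is to upgrade the first countability established in the preceding proposition to full metrizability. The implication I actually need—sequential compactness implies compactness—holds for metrizable spaces but fails for spaces that are merely first countable (for instance $[0,\omega_1)$ with the order topology). Once I know that the WOT-weak topology on $S(Y)$ is metrizable, the corollary is immediate: a metrizable space is compact if and only if it is sequentially compact, and sequential compactness is precisely the content of Theorem \ref{sequential}.

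To obtain metrizability I would reuse the analysis from the proof of the preceding proposition. There it is shown that the WOT-weak topology coincides with the $\xi$ topology, which is generated by the countable family of real-valued functions
$$d_{i,j,l}(A,B) = \left| \langle A(f_i)h_j, h_l \rangle - \langle B(f_i)h_j, h_l \rangle \right|,$$
where $\{f_i\}_{i=1}^{\infty} = P$ is a countable dense subset of $C_{\mathbb{R}}(Y)$ and $\{h_j\}_{j=1}^{\infty} = O$ is a countable orthonormal basis of the separable space $\mathcal{H}$. Each $d_{i,j,l}$ is a continuous pseudometric on $S(Y)$, and the $\xi$ topology is exactly the topology they jointly induce; equivalently, the map $A \mapsto (\langle A(f_i)h_j, h_l \rangle)_{i,j,l}$ sends $S(Y)$ into a countable product of copies of $\mathbb{C}$, which is metrizable in the product topology. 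Hence it suffices to verify that this countable family of pseudometrics separates points, for then the induced topology is Hausdorff and metrizable via the standard weighted-sum metric $d(A,B) = \sum 2^{-m}\min\{1, d_{i,j,l}(A,B)\}$ taken over an enumeration of the index triples.

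The main obstacle is this point-separation step. Suppose $A \neq B$ in $S(Y)$. Then $A(\Delta) \neq B(\Delta)$ for some $\Delta \in \mathscr{B}(Y)$, so there exist $g, k \in \mathcal{H}$ with $A_{g,k}(\Delta) \neq B_{g,k}(\Delta)$; in particular the complex Borel measures $A_{g,k}$ and $B_{g,k}$ are distinct. Since $Y$ is compact, the Riesz representation theorem furnishes an $f \in C(Y)$ with $\int_Y f\, dA_{g,k} \neq \int_Y f\, dB_{g,k}$, and by splitting into real and imaginary parts I may take $f \in C_{\mathbb{R}}(Y)$. Using the sesquilinearity of $[g,k] \mapsto A_{g,k}$ together with the completeness of the orthonormal basis $O$, the values $\langle A(f)h_j, h_l \rangle$ and $\langle B(f)h_j, h_l \rangle$ must then differ for some $h_j, h_l \in O$. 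Finally, the uniform bound $\left|\left| \int_Y f\, dA \right|\right| \leq ||f||_{\infty}$ (equivalently, that $A_{h_j,h_l}$ has total variation at most $||h_j||\,||h_l|| = 1$) lets me replace $f$ by a sufficiently close $f_i \in P$ without destroying the strict inequality, producing an index triple with $d_{i,j,l}(A,B) > 0$. This shows the family separates points, so the WOT-weak topology is metrizable; combining metrizability with the sequential compactness of Theorem \ref{sequential} yields that $S(Y)$ is compact in the WOT-weak topology.
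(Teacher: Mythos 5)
Your proposal is correct, and it in fact does more than match the paper's own proof: it repairs a genuine defect in it. The paper's argument for Corollary \ref{WOTcompact} is exactly the step you flagged as inadequate: it combines first countability of the WOT-weak topology (from the preceding proposition) with the assertion that ``in first countable topologies, sequential compactness and compactness are equivalent,'' and then cites Theorem \ref{sequential}. Only one direction of that assertion is true (compact plus first countable implies sequentially compact); the direction the paper needs is false in general, with $[0,\omega_1)$ in the order topology as the standard counterexample, so the paper's final step does not stand as written. Your route closes this gap by the minimal strengthening that actually suffices: the $\xi$ topology is not merely first countable but is the topology induced by the countable family of pseudometrics $d_{i,j,l}(A,B)=\left|\langle A(f_i)h_j,h_l\rangle-\langle B(f_i)h_j,h_l\rangle\right|$, equivalently the initial topology of the map $A\mapsto(\langle A(f_i)h_j,h_l\rangle)_{i,j,l}$ into a countable product of copies of $\mathbb{C}$, and a sequentially compact metrizable space is compact. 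Your point-separation step is also sound: on a compact metric space finite Borel measures are automatically regular, so Riesz uniqueness yields the separating $f\in C_{\mathbb{R}}(Y)$; a nonzero bounded operator must have a nonzero matrix entry against the orthonormal basis $O$; and the total variation bound $|A_{h_j,h_l}|(Y)\leq \|h_j\|\,\|h_l\|$ (a fact the paper itself uses in the proof of Theorem \ref{P_0complete}) lets you replace $f$ by a nearby $f_i\in P$ while keeping the strict inequality. Two remarks. First, the separation step could be bypassed entirely: a topology generated by countably many pseudometrics is pseudometrizable, every open set in a pseudometric space is saturated under the relation of zero distance, and hence sequential compactness already implies compactness by passing to the metric quotient; so compactness follows even before checking Hausdorffness. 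Second, your separation argument is still worth keeping, since it shows the WOT-weak topology is Hausdorff, and compactness without the Hausdorff property would be a much weaker conclusion. Either way, your proof, unlike the paper's, is complete.
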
 

\begin{proof} Since $\mathcal{H}$ is a separable Hilbert space, the above proposition shows that the WOT-weak topology on $S(Y)$ is first countable.  By Theorem \ref{sequential}, we know that $S(Y)$ is sequentially compact.  In first countable topologies, sequential compactness and compactness are equivalent.   Hence, $S(Y)$ with the WOT-weak topology is compact. 
\end{proof} 

\subsection{Application to IFS}

As mentioned earlier, we discuss an application to iterated function systems. Let $(Y,d)$ be an arbitrary complete and separable metric space equipped with an IFS $\mathcal{S} = \{ \sigma_0,...,\sigma_{N-1} \}$. Suppose that $\mathcal{K} = L^2(Y, \mu)$, where $\mu$ is the Hutchinson measure associated to the IFS, whose support is the compact attractor set $X \subseteq Y$. Let $S_0(Y)$ be the collection of positive operator-valued measures with respect to the pair $(Y, \mathcal{K})$, with the additional condition (see Section 2.1) that if $B \in S_0(Y)$, then for all $f \in \text{Lip}(Y)$, there exists an $0 \leq M_{f,B} < \infty$ such that 
$$\left| \int_Y f dB_{g, h} \right| \leq M_{f, B} ||f|| ||g||$$

\noindent for all $g, h \in \mathcal{K}$. Recall that \begin{center} $F_i: \mathcal{K} \rightarrow \mathcal{K}$ is given by $\displaystyle{\phi \mapsto \frac{1}{\sqrt{N}} (\phi \circ \sigma_i)}$, \end{center} 

\noindent and the $F_i$ operators satisfy 
$$\sum_{i=0}^{N-1} F_i^*F_i = \text{id}_{\mathcal{K}},$$ 

\noindent as described in Theorem \ref{Kornelson}. Note that the operators $F_i$ are originally defined on $L^2(X, \mu)$, but they can be extended to a map on $\mathcal{K}$. 

\begin{thm} \label{extenduniquepovm} [Davison] The map $V: S_0(Y) \rightarrow S_0(Y)$ given by 
$$B(\cdot) \mapsto \sum_{i=0}^{N-1} F_i^*B(\sigma_i^{-1}(\cdot))F_i$$

\noindent is a Lipschitz contraction in the $\rho$ metric. As such, there exists a unique positive operator-valued measure $A \in S_0(Y)$ satisfying 
$$A(\cdot) = \sum_{i=0}^{N-1} F_i^*A(\sigma_i^{-1}(\cdot))F_i.$$ 

\end{thm}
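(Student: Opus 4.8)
The plan is to verify the hypotheses of the Banach Fixed Point Theorem for $V$ acting on $(S_0(Y), \rho)$, which is a complete metric space by Corollary \ref{corollarycomplete}. This requires two things: that $V$ maps $S_0(Y)$ into itself, and that $V$ is a Lipschitz contraction in the $\rho$ metric; the existence and uniqueness of the fixed point then follow immediately.

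The computational heart of the argument is a change-of-variables identity. For $f \in \text{Lip}(Y)$ and $B \in S_0(Y)$, I would pass through the defining sesquilinear forms and use $\int_Y f \, d(B \circ \sigma_i^{-1}) = \int_Y (f \circ \sigma_i) \, dB$ to establish
$$\int_Y f \, dV(B) = \sum_{i=0}^{N-1} F_i^* \left( \int_Y (f \circ \sigma_i) \, dB \right) F_i.$$
For the contraction estimate, let $c_i < 1$ be the Lipschitz constant of $\sigma_i$ and set $c = \max_i c_i < 1$. Given $A, B \in S_0(Y)$ and $f \in \text{Lip}_1(Y)$, each $f \circ \sigma_i$ has Lipschitz constant at most $c$, so $\frac{1}{c}(f \circ \sigma_i) \in \text{Lip}_1(Y)$ and hence the self-adjoint operator $C_i := \int_Y (f \circ \sigma_i)\, d(A-B)$ satisfies $\|C_i\| \leq c\, \rho(A,B)$. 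For a unit vector $h \in \mathcal{K}$ I would bound $|\langle \sum_i F_i^* C_i F_i h, h\rangle| \leq \sum_i \|C_i\|\, \|F_i h\|^2 \leq c\,\rho(A,B) \sum_i \|F_i h\|^2$, and then invoke the operator identity $\sum_i F_i^* F_i = \text{id}_{\mathcal{K}}$ of Theorem \ref{Kornelson} to get $\sum_i \|F_i h\|^2 = \|h\|^2 = 1$. Since $\sum_i F_i^* C_i F_i$ is self-adjoint, taking the supremum over such $h$ and then over $f \in \text{Lip}_1(Y)$ yields $\rho(V(A), V(B)) \leq c\, \rho(A,B)$ with $c < 1$.

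For well-definedness, the positivity, the normalization $V(B)(Y) = \sum_i F_i^* F_i = \text{id}_{\mathcal{K}}$, and the countable additivity of $V(B)$ carry over verbatim from the compact case of Theorem \ref{contraction} (cf. Theorem 2.19 of \cite{Davison4}), since those arguments are pointwise in the measure and never use compactness. The genuinely new point --- and the step I expect to be the main obstacle --- is verifying the $S_0(Y)$ membership condition for $V(B)$, which has no analogue in the compact setting where every Lipschitz function is automatically bounded. Here I would use $\|F_i\| \leq 1$ (immediate from $F_i^* F_i \leq \sum_j F_j^* F_j = \text{id}_{\mathcal{K}}$) together with $f \circ \sigma_i \in \text{Lip}(Y)$. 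For $f \in \text{Lip}(Y)$ and $g,h \in \mathcal{K}$, the identity above gives $\int_Y f \, dV(B)_{g,h} = \sum_i \int_Y (f\circ\sigma_i)\, dB_{F_i g, F_i h}$, and applying the defining bound of $B \in S_0(Y)$ to each $f \circ \sigma_i$ produces
$$\left| \int_Y f \, dV(B)_{g,h} \right| \leq \sum_{i=0}^{N-1} M_{f \circ \sigma_i, B}\, \|F_i g\|\, \|F_i h\| \leq \left( \sum_{i=0}^{N-1} M_{f\circ\sigma_i, B} \right) \|g\|\, \|h\|,$$
so that $M_{f, V(B)} := \sum_{i=0}^{N-1} M_{f\circ\sigma_i, B} < \infty$ witnesses $V(B) \in S_0(Y)$.

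With both properties in hand, the Banach Fixed Point Theorem applied to the complete space $(S_0(Y), \rho)$ furnishes the unique $A \in S_0(Y)$ satisfying $A(\cdot) = \sum_i F_i^* A(\sigma_i^{-1}(\cdot)) F_i$. The one subtlety worth checking carefully is the change-of-variables identity at the level of operator-valued integrals; I would justify it by first treating indicator functions (where $B \circ \sigma_i^{-1}$ is the pushforward of $B$) and extending through simple functions and monotone limits to all of $\text{Lip}(Y)$.
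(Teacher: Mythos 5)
Your proposal is correct, and on the portion of the argument that the paper actually writes out it follows the same route: your verification that $V(B) \in S_0(Y)$ --- passing through the sesquilinear forms to get $\int_Y f\, dV(B)_{g,h} = \sum_{i} \int_Y (f\circ\sigma_i)\, dB_{F_i g, F_i h}$, taking $M_{f,V(B)} = \sum_{i} M_{f\circ\sigma_i,B}$, and bounding $\|F_i g\| \leq \|g\|$ via $\sum_i F_i^* F_i = \text{id}_{\mathcal{K}}$ --- reproduces the paper's computation essentially line for line, and both arguments conclude by combining Corollary \ref{corollarycomplete} with the Contraction Mapping Principle. The one genuine difference is the contraction estimate itself: the paper does not prove it, deferring to Theorem 2.15 of \cite{Davison} as corrected in the erratum \cite{Davison2}, whereas you supply a self-contained argument. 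Your argument is sound: for $f \in \text{Lip}_1(Y)$ each $f\circ\sigma_i$ has Lipschitz constant at most $c = \max_i c_i < 1$, so your operator $C_i = \int_Y (f\circ\sigma_i)\, d(A-B)$ satisfies $\|C_i\| \leq c\,\rho(A,B)$ directly from the definition of $\rho$ as a supremum over $\text{Lip}_1(Y)$; since $\int_Y f\, dV(A) - \int_Y f\, dV(B) = \sum_i F_i^* C_i F_i$ is self-adjoint, its norm is the supremum of $\left| \left\langle \sum_i F_i^* C_i F_i h, h \right\rangle \right|$ over unit vectors $h$, and Theorem \ref{Kornelson} collapses $\sum_i \|F_i h\|^2$ to $\|h\|^2$, giving $\rho(V(A),V(B)) \leq c\,\rho(A,B)$. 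What your version buys is self-containedness --- the reader need not consult the earlier paper and its erratum (which exists precisely because the original contraction proof there was flawed) --- at the cost of some length; the paper buys brevity by citation. Your closing caveat about justifying the change-of-variables identity via indicators, simple functions, and monotone limits also flags the right technical point, since in this noncompact setting $f \in \text{Lip}(Y)$ may be unbounded and integrability comes only from the $S_0(Y)$ condition.
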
 

\begin{proof} We need to show that $V$ maps into $S_0(Y)$. To see why $V$ is a Lipschitz contraction, we refer the reader to Theorem 2.15 in our previous paper \cite{Davison}. The correct proof of this theorem occurs in the erratum to this previous paper \cite{Davison2}. 

Let $f \in \text{Lip}(Y)$, and $B \in S_0(Y)$. One can verify that $V(B)$ satisfies the properties of a positive operator-valued measure, and it remains to check that there exists an $0 \leq M_{f, V(B)} < \infty$ such that 
$$\left| \int_Y f dV(B)_{g,h} \right| \leq M_{f, V(B)} ||g|| ||h||$$

\noindent for all $g, h \in \mathcal{H}$. Let $g, h \in \mathcal{H}$. Then

\begin{eqnarray*} 
\left | \left \langle \left( \int_Y f dV(B) \right) g, h \right \rangle \right |& = & \left| \sum_{i=0}^{N-1} \int_Y f dB_{F_ig,F_ih}(\sigma_i^{-1}(\cdot)) \right | \\
& \leq & \left | \sum_{i=0}^{N-1} \int_Y f \circ \sigma_i dB_{F_ig, F_ih} \right | \\
& \leq & \sum_{i=0}^{N-1} \left | \int_Y f \circ \sigma_i dB_{F_ig, F_ih} \right | \\
\end{eqnarray*} 

\noindent Since $f \in \text{Lip}(Y)$, $f \circ \sigma_i \in \text{Lip}(Y)$ for all $i = 0, ..., N-1$. Hence,

\begin{eqnarray*}
\sum_{i=0}^{N-1} \left | \int_Y f \circ \sigma_i dB_{F_ig, F_ih} \right | & \leq & \sum_{i=0}^{N-1} M_{f \circ \sigma_i, B} ||F_ig|| ||F_ih|| \\
& \leq & \sum_{i=0}^{N-1} M_{f \circ \sigma_i, B} ||g|| ||h|| \\
& \leq & \left( \sum_{i=0}^{N-1} M_{f \circ \sigma_i, B} \right) ||g|| ||h|| \\
& = & M_{f,B} ||g|| ||h||, \\
\end{eqnarray*}

\noindent where we define $M_{f,B} = \sum_{i=0}^{N-1} M_{f \circ \sigma_i, B}$. Note that the second inequality above is because 
\begin{eqnarray*} 
||F_ih|| = (||F_ih||^2)^{\frac{1}{2}} \leq \left( \sum_{i=0}^{N-1} ||F_ih||^2 \right)^{\frac{1}{2}} & = & \left( \sum_{i=0}^{N-1} \langle F_ih, F_ih \rangle \right)^{\frac{1}{2}} \\
& = & \left( \sum_{i=0}^{N-1} \langle F_i^*F_ih, h \rangle \right)^{\frac{1}{2}} \\
& = & \left( \left \langle \left( \sum_{i=0}^{N-1} F_i^*F_i\right)h, h \right \rangle \right)^{\frac{1}{2}} \\
& = & ( \langle h, h \rangle )^{\frac{1}{2}} \\
& = & || h ||\\
\end{eqnarray*}

By Corollary \ref{corollarycomplete}, the metric space $(S_0(Y), \rho)$ is complete, and thus there exists a unique positive operator-valued measure $A \in S_0(Y)$ satisfying 
$$A(\cdot) = \sum_{i=0}^{N-1} F_i^*A(\sigma_i^{-1}(\cdot))F_i.$$ 

\end{proof} 

We will now show that the support of $A$ is $X$. To this end, we have the following definition.

\begin{defi} The support of a positive operator-valued measure $A$ with respect to the pair $(Y, \mathcal{K})$ is the closed subset $Y \setminus \bigcup \{U \subseteq Y : A(U) = 0 \text{ and } U \text{ open} \} $, where $0$ is the zero operator on $\mathcal{K}$. 

\end{defi}

We briefly repeat some preliminaries that also appeared in our previous paper \cite{Davison4}. Define $\Gamma_N = \{0,...,N-1\}$, and let $\Omega = \prod_{1}^{\infty} \Gamma_N.$ It is well known that $\Omega$ is a compact metric space. The metric $m$ on $\Omega$ is given by

$$m(\alpha, \beta) = \frac{1}{2^j}$$ 

\noindent where $\alpha, \beta \in \Omega,$ and $j \in \mathbb{N}$ is the first entry at which $\alpha$ and $\beta$ differ. 

We next define the shift maps on this compact metric space. Indeed, for $0 \leq i \leq N-1$, let $\eta_i: \Omega \rightarrow \Omega$ be given by $\eta_i((\alpha_1, \alpha_2,...,)) = (i, \alpha_1, \alpha_2,,...,)$, and define $\eta: \Omega \rightarrow \Omega$ given by $\eta((\alpha_1,\alpha_2, \alpha_3,...)) = (\alpha_2, \alpha_3,....,)$. 

\begin{itemize} 

\item The maps $\eta_i$ are Lipschitz contractions on $\Omega$ in the $m$ metric, and therefore, the family of maps $\mathcal{T} = \{\eta_0,...,\eta_{N-1}\}$ constitutes an IFS on $\Omega$. 

\item The compact metric space $\Omega$ is itself the attractor set associated to the IFS $\mathcal{T}$.

\item The Hutchinson measure $P$ on $\Omega$ associated to the IFS $\mathcal{T}$ is called the Bernoulli measure, and it satisfies

$$P(\cdot) = \frac{1}{N} \sum_{i=0}^{N-1} P(\eta_i^{-1}(\cdot)).$$

\item The map $\eta$ is a left inverse for each $\eta_i$, meaning that $\eta \circ \eta_i = \text{id}_{\Omega}$ for each $0 \leq i \leq N-1.$ 

\end{itemize} 

For each $0 \leq i \leq N-1$ we can define $R_i: L^2(\Omega,P) \rightarrow L^2(\Omega, P)$ by 

$$\phi \mapsto \frac{1}{\sqrt{N}} (\phi \circ \eta_i).$$

\noindent By Theorem \ref{uniquepvm}, there exists a unique projection-valued measure $E$ with respect to the pair $(\Omega, L^2(\Omega,P))$ such that 

\begin{equation} \label{pvm_alphabet} E(\cdot) = \sum_{i=0}^{N-1} R_i^*E(\eta_i^{-1}(\cdot))R_i. \end{equation} 

For each $\alpha \in \Omega$, define $\pi(\alpha) = \cap_{n=1}^{\infty} \sigma_{\alpha_1} \circ ... \circ \sigma_{\alpha_n}(X),$ where $\alpha = (\alpha_1, \alpha_2, ..., \alpha_n,...)$. Since the maps $\sigma_i$ are all contractive, $\pi(\alpha)$ is a single point in $X$. Define the map $\pi: \Omega \rightarrow X \subseteq Y$ by $\alpha \rightarrow \pi(\alpha)$ as the coding map. 

\begin{lem} \cite{Kornelson} [Jorgensen, Kornelson, Shuman] The coding map is continuous. Moreover, for all $0 \leq i \leq N-1,$ we have the relation 

\begin{equation} \label{relation} \pi \circ \eta_i = \sigma_i \circ \pi. \end{equation} 

\end{lem}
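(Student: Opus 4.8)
The statement has two parts: continuity of $\pi$ and the intertwining relation $\pi \circ \eta_i = \sigma_i \circ \pi$. The plan is to establish continuity using the geometric contractivity of the $\sigma_i$ together with the fact that the metric $m$ on $\Omega$ measures agreement of initial segments, and then to deduce the relation almost formally from the observation that $\pi$ applied to any point of $\Omega$ is a single point.

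For continuity, I would first fix a common contraction constant $0 \le c < 1$ with $d(\sigma_i(x),\sigma_i(y)) \le c\,d(x,y)$ for all $i$ and all $x,y \in Y$; such a $c$ exists since $\mathcal{S}$ is finite. Writing $C_n(\alpha) = \sigma_{\alpha_1}\circ\cdots\circ\sigma_{\alpha_n}(X)$, the composition has Lipschitz constant at most $c^n$, so $\mathrm{diam}\,C_n(\alpha) \le c^n\,\mathrm{diam}(X)$, which tends to $0$ as $X$ is compact and $c < 1$. The key combinatorial fact is that if $m(\alpha,\beta) < 2^{-n}$ then $\alpha$ and $\beta$ agree in their first $n$ coordinates, so $C_n(\alpha) = C_n(\beta)$. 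Since $\pi(\alpha) \in \bigcap_{m} C_m(\alpha) \subseteq C_n(\alpha)$ and likewise $\pi(\beta) \in C_n(\beta) = C_n(\alpha)$, both images lie in a common set of diameter at most $c^n\,\mathrm{diam}(X)$, whence $d(\pi(\alpha),\pi(\beta)) \le c^n\,\mathrm{diam}(X)$. Given $\varepsilon > 0$, choosing $n$ with $c^n\,\mathrm{diam}(X) < \varepsilon$ and then $\delta = 2^{-n}$ proves continuity (in fact uniform continuity).

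For the relation, fix $\alpha = (\alpha_1,\alpha_2,\ldots) \in \Omega$ and an index $i$. Then $\eta_i(\alpha) = (i,\alpha_1,\alpha_2,\ldots)$, so unwinding the definition gives $\pi(\eta_i(\alpha)) = \bigcap_{k \ge 0} \sigma_i\bigl(C_k(\alpha)\bigr)$, where we set $C_0(\alpha) = X$. Because $\pi(\alpha) \in C_k(\alpha)$ for every $k \ge 0$, we have $\sigma_i(\pi(\alpha)) \in \sigma_i(C_k(\alpha))$ for every $k$, hence $\sigma_i(\pi(\alpha)) \in \bigcap_{k \ge 0}\sigma_i(C_k(\alpha)) = \pi(\eta_i(\alpha))$. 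But $\pi(\eta_i(\alpha))$ is a single point, being $\pi$ evaluated at a point of $\Omega$ with the $\sigma_j$ contractive, so this membership forces $\sigma_i(\pi(\alpha)) = \pi(\eta_i(\alpha))$, which is exactly $\pi\circ\eta_i = \sigma_i\circ\pi$.

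I expect the continuity half to be the more substantive part, the main point being to link the symbolic metric $m$ on $\Omega$ to the geometric decay of the sets $C_n(\alpha)$; once the diameter estimate $\mathrm{diam}\,C_n(\alpha)\le c^n\,\mathrm{diam}(X)$ is in hand the argument is routine. The relation, by contrast, is essentially formal: the only subtlety is that the inclusion $\sigma_i\bigl(\bigcap_k C_k\bigr) \subseteq \bigcap_k \sigma_i(C_k)$ is automatic while the reverse need not hold for a general map, which is why I avoid the reverse inclusion entirely and instead exploit that the right-hand intersection is already a singleton.
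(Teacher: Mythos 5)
The paper never proves this lemma itself --- it is stated as a citation to Jorgensen, Kornelson, and Shuman --- so there is no internal proof to compare against; your argument therefore has to stand on its own, and it does. The continuity half is correct: the uniform contraction constant $c$ exists because the family is finite, the diameter bound $\mathrm{diam}\,C_n(\alpha)\le c^n\,\mathrm{diam}(X)$ is valid since $X$ is compact (hence of finite diameter), the combinatorial observation that $m(\alpha,\beta)<2^{-n}$ forces agreement of the first $n$ letters and hence $C_n(\alpha)=C_n(\beta)$ is exactly right, and the membership $\pi(\alpha)\in C_n(\alpha)$ is legitimate because the sets $C_n(\alpha)$ are nested (via $\sigma_j(X)\subseteq X$, which follows from the invariance $X=\bigcup_j\sigma_j(X)$) and $\pi(\alpha)$ is by definition their common point. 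The relation half is also correct and handled with appropriate care: the identification $\pi(\eta_i(\alpha))=\bigcap_{k\ge 0}\sigma_i(C_k(\alpha))$ just unwinds the definition, the inclusion $\sigma_i(\pi(\alpha))\in\bigcap_{k\ge 0}\sigma_i(C_k(\alpha))$ is the automatic direction, and since that intersection is the singleton $\{\pi(\eta_i(\alpha))\}$ the equality follows; you correctly flag that the reverse inclusion (intersection of images inside image of intersection) is the one that can fail for general maps, and your argument never needs it. This is the standard proof of the lemma, consistent with what one finds in the cited source.
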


This lemma is used to proved the following theorem. 

\begin{thm} \cite{Kornelson} \label{intertwining} [Jorgensen, Kornelson, Shuman]

\begin{enumerate} 

\item The operator $V: L^2(Y, \mu) \rightarrow L^2(\Omega, P)$ given by

$$V(f) = f \circ \pi$$

\noindent is isometric. 

\item The following intertwining relations hold: 

$$VF_i = R_iV,$$

\noindent for all $0 \leq i \leq N-1$. 

\end{enumerate} 

\end{thm}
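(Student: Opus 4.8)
The plan is to treat the two parts separately, with part (2) amounting to a one-line computation and part (1) reducing to the identification of a single pushforward measure. For part (1), I would begin by observing that for any $f \in L^2(Y,\mu)$ the change-of-variables formula gives
$$\|Vf\|_{L^2(\Omega,P)}^2 = \int_\Omega |f \circ \pi|^2 \, dP = \int_Y |f|^2 \, d(\pi_* P),$$
where $\pi_* P$ is the pushforward measure defined by $(\pi_* P)(\Delta) = P(\pi^{-1}(\Delta))$; this is legitimate because the coding map $\pi$ is continuous, hence Borel measurable, by the preceding lemma. Since $\pi(\Omega) \subseteq X$ and $\mu$ is supported on $X$, it therefore suffices to prove the single measure-theoretic identity $\pi_* P = \mu$. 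Granting this, the displayed chain reads $\|Vf\|^2 = \int_Y |f|^2\, d\mu = \|f\|^2$, which simultaneously shows that $V$ is well-defined on $\mu$-equivalence classes and that it is isometric.

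To establish $\pi_* P = \mu$, I would invoke the uniqueness half of Theorem \ref{ambient}: $\mu$ is the unique Borel probability measure in $M(Y)$ fixed by the Hutchinson operator $T(\nu)(\cdot) = \frac{1}{N}\sum_{i=0}^{N-1} \nu(\sigma_i^{-1}(\cdot))$. It then remains to check that $\pi_* P$ is such a fixed point. For a Borel set $\Delta \subseteq Y$, using the intertwining relation (\ref{relation}) in the form $(\sigma_i \circ \pi)^{-1} = \eta_i^{-1} \circ \pi^{-1}$, I compute
$$(\pi_* P)(\sigma_i^{-1}(\Delta)) = P\big((\sigma_i \circ \pi)^{-1}(\Delta)\big) = P\big(\eta_i^{-1}(\pi^{-1}(\Delta))\big),$$
so that, by the fixed-point property $P(\cdot) = \frac{1}{N}\sum_{i=0}^{N-1} P(\eta_i^{-1}(\cdot))$ of the Bernoulli measure,
$$\frac{1}{N}\sum_{i=0}^{N-1} (\pi_* P)(\sigma_i^{-1}(\Delta)) = P(\pi^{-1}(\Delta)) = (\pi_* P)(\Delta).$$
Thus $T(\pi_* P) = \pi_* P$. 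Since $\pi_* P$ is a Borel probability measure supported on the compact set $X$, it lies in $M(Y)$, and uniqueness forces $\pi_* P = \mu$, completing part (1).

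For part (2), I would argue by direct computation on an arbitrary $f \in L^2(Y,\mu)$. On one side,
$$V F_i(f) = V\!\left(\frac{1}{\sqrt{N}}\, f \circ \sigma_i\right) = \frac{1}{\sqrt{N}}\, f \circ (\sigma_i \circ \pi),$$
while on the other,
$$R_i V(f) = R_i(f \circ \pi) = \frac{1}{\sqrt{N}}\, f \circ (\pi \circ \eta_i).$$
The relation (\ref{relation}), namely $\sigma_i \circ \pi = \pi \circ \eta_i$, makes the two right-hand sides identical, yielding $VF_i = R_i V$ for each $0 \leq i \leq N-1$.

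The main obstacle is the identification $\pi_* P = \mu$ in part (1); everything else is bookkeeping that follows from the defining formulas of $V$, $F_i$, and $R_i$ together with the single geometric relation (\ref{relation}). The two delicate points in that identification are ensuring the change-of-variables step is valid, which requires only Borel measurability of $\pi$, and confirming that $\pi_* P$ genuinely belongs to the class $M(Y)$ to which the uniqueness in Theorem \ref{ambient} applies, which is immediate from the compactness of $X$. Once the fixed-point computation is carried out and uniqueness is invoked, both parts close quickly.
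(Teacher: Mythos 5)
Your proposal is correct, but note that the paper itself offers no proof of this theorem: it is imported verbatim from the cited Jorgensen--Kornelson--Shuman reference, so there is no in-paper argument to compare against. Your route is the standard one and is sound in all its steps: the identity $\pi_*P = \mu$ is exactly the right pivot, your verification that $T(\pi_*P) = \pi_*P$ via $\pi^{-1}(\sigma_i^{-1}(\Delta)) = \eta_i^{-1}(\pi^{-1}(\Delta))$ (a correct unwinding of equation (\ref{relation})) is clean, and your observation that $\pi_*P$ lies in $M(Y)$ because it is concentrated on the compact set $X$ is precisely what licenses the appeal to the uniqueness half of Theorem \ref{ambient}. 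You also correctly flag the point most write-ups gloss over, namely that $\pi_*P=\mu$ is what makes $V$ well defined on $\mu$-equivalence classes, not just isometric. Part (2) is, as you say, immediate from $\sigma_i \circ \pi = \pi \circ \eta_i$; the only background fact you implicitly rely on is that $F_i$ is itself well defined on $L^2(Y,\mu)$, which the paper already grants in Theorem \ref{Kornelson}. A pleasant feature of your write-up is that it is self-contained within the paper's own toolkit (Theorem \ref{ambient}, the Bernoulli fixed-point property, and equation (\ref{relation})), which is an economy the paper achieves only by deferring to the external reference.
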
 

Consider now the projection-valued measure $E(\pi^{-1}(\cdot))$ from the Borel subsets of $Y$ into the projections on $L^2(\Omega, P)$. The following result first appeared in our previous paper, when we were considering positive operator-valued measures defined only on the compact attractor set $X$ \cite{Davison4}. With Theorem \ref{extenduniquepovm}, we can extend this result to positive operator-valued measures on $Y$, which becomes useful for calculating the support of $A$.  

\begin{thm} \cite{Davison4} \text{} [Davison] The projection-valued measure $E(\pi^{-1}(\cdot)),$ and the positive operator-valued measure $A$ (from Theorem \ref{extenduniquepovm}) are related as follows: 

$$V^{*}E(\pi^{-1}(\cdot))V = A(\cdot).$$

\end{thm}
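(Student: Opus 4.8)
The plan is to prove the identity $V^{*}E(\pi^{-1}(\cdot))V = A(\cdot)$ by exploiting the uniqueness half of Theorem \ref{extenduniquepovm}. Since $A$ is the \emph{unique} positive operator-valued measure in $S_0(Y)$ fixed by the contraction $V \colon S_0(Y) \to S_0(Y)$ (the IFS map, not the isometry — I will rename to avoid the clash), it suffices to show that the operator-valued measure $B(\cdot) := V^{*}E(\pi^{-1}(\cdot))V$ lies in $S_0(Y)$ and is a fixed point of the same map, i.e. that $B(\cdot) = \sum_{i=0}^{N-1} F_i^{*} B(\sigma_i^{-1}(\cdot)) F_i$. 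By uniqueness, $B = A$ then follows immediately, and no direct construction of $A$ is needed.

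First I would verify that $B$ is a positive operator-valued measure with respect to $(Y, L^2(Y,\mu))$: positivity follows because $E(\pi^{-1}(\Delta))$ is a projection (hence positive) and conjugation by $V$ preserves positivity; the normalization $B(Y) = V^{*} E(\pi^{-1}(Y)) V = V^{*} E(\Omega) V = V^{*} V = \mathrm{id}$ uses that $V$ is isometric (Theorem \ref{intertwining}, part 1) and $\pi^{-1}(Y) = \Omega$; countable additivity passes through the weak integral. I would also check $B \in S_0(Y)$ using the bound for elements of $S_0(Y)$ — here the support of $A$ sits inside the compact set $X$, so this membership is automatic by the compact-support example discussed in Section 2.1.

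The central computation is the fixed-point identity. Starting from $E(\cdot) = \sum_{i} R_i^{*} E(\eta_i^{-1}(\cdot)) R_i$ (equation \ref{pvm_alphabet}), I would substitute $\pi^{-1}(\Delta)$ for the argument and then conjugate by $V$:
$$
V^{*} E(\pi^{-1}(\Delta)) V = \sum_{i=0}^{N-1} V^{*} R_i^{*} E\big(\eta_i^{-1}(\pi^{-1}(\Delta))\big) R_i V.
$$
The key algebraic input is the intertwining relation $V F_i = R_i V$ (Theorem \ref{intertwining}, part 2), which gives $R_i V = V F_i$ and, taking adjoints, $V^{*} R_i^{*} = F_i^{*} V^{*}$. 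The key set-theoretic input is the coding relation $\pi \circ \eta_i = \sigma_i \circ \pi$ (equation \ref{relation}), from which $\eta_i^{-1}(\pi^{-1}(\Delta)) = (\pi \circ \eta_i)^{-1}(\Delta) = (\sigma_i \circ \pi)^{-1}(\Delta) = \pi^{-1}(\sigma_i^{-1}(\Delta))$. Combining these rewrites the summand as $F_i^{*} \, V^{*} E(\pi^{-1}(\sigma_i^{-1}(\Delta))) V \, F_i = F_i^{*} B(\sigma_i^{-1}(\Delta)) F_i$, so $B = \sum_i F_i^{*} B(\sigma_i^{-1}(\cdot)) F_i$ as required.

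The step I expect to require the most care is the set-theoretic manipulation $\eta_i^{-1}(\pi^{-1}(\Delta)) = \pi^{-1}(\sigma_i^{-1}(\Delta))$, since one must track the direction of composition carefully — the relation $\pi \circ \eta_i = \sigma_i \circ \pi$ produces preimage identities that are easy to invert incorrectly. A secondary subtlety is confirming that all the operator manipulations (moving $V, V^{*}$ through the integral and through the sum) are legitimate; these are continuous linear operations, so they commute with the weakly-defined integrals, but I would state this explicitly rather than gloss over it. Once the fixed-point identity is established, the conclusion $B = A$ is immediate from the uniqueness in Theorem \ref{extenduniquepovm}, and the theorem follows.
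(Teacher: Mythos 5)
Your proposal is correct, and it is essentially the argument the paper relies on: the paper does not reprove this theorem in its body (it defers to \cite{Davison4}), but its remark that ``with Theorem \ref{extenduniquepovm} we can extend this result'' points exactly to your route --- show that $B(\cdot) := V^{*}E(\pi^{-1}(\cdot))V$ is a positive operator-valued measure in $S_0(Y)$, verify the fixed-point identity via the adjoint intertwining relation $V^{*}R_i^{*} = F_i^{*}V^{*}$ and the preimage identity $\eta_i^{-1}(\pi^{-1}(\Delta)) = \pi^{-1}(\sigma_i^{-1}(\Delta))$ coming from $\pi \circ \eta_i = \sigma_i \circ \pi$, and then invoke uniqueness of the fixed point. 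Both of your ``key inputs'' are manipulated in the right direction. One wording slip you should repair: when checking $B \in S_0(Y)$ you appeal to ``the support of $A$'' lying in $X$, which is circular, since $B = A$ is precisely the conclusion. The fix is immediate and stays inside your own framework: $B$ itself is supported in the compact set $X$, because any Borel set $\Delta$ disjoint from $X$ satisfies $\pi^{-1}(\Delta) = \emptyset$ and hence $B(\Delta) = 0$; moreover $B_{g,h} = E_{Vg,Vh}\circ \pi^{-1}$ has total variation at most $||Vg||\,||Vh|| = ||g||\,||h||$, so $M_{f,B} = \max_{x \in X}|f(x)|$ witnesses the $S_0(Y)$ bound, exactly as in the compact-support example of Section 2.1.
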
 

\begin{thm} \text{} [Davison] The support of the positive operator-valued measure $A$ (from Theorem \ref{extenduniquepovm}) is $X$. 
\end{thm}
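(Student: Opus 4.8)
The plan is to establish the two inclusions $\operatorname{supp}(A) \subseteq X$ and $X \subseteq \operatorname{supp}(A)$ separately, using as the main tool the identity $A(\cdot) = V^{*}E(\pi^{-1}(\cdot))V$ from the immediately preceding theorem, which transfers questions about $A$ on $Y$ to questions about the projection-valued measure $E$ and the coding map $\pi$ on $\Omega$.

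For the inclusion $\operatorname{supp}(A) \subseteq X$, I would argue as follows. Since $X$ is compact it is closed, so $Y \setminus X$ is open. The coding map satisfies $\pi(\Omega) \subseteq X$, hence $\pi^{-1}(Y\setminus X) = \emptyset$, and therefore
$$A(Y\setminus X) = V^{*}E(\pi^{-1}(Y\setminus X))V = V^{*}E(\emptyset)V = 0.$$
Thus $Y \setminus X$ is an open set on which $A$ vanishes, so it is contained in the union appearing in the definition of the support; taking complements yields $\operatorname{supp}(A) \subseteq X$.

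For the reverse inclusion, the key step is to identify the scalar measure $A_{1,1}(\cdot) = \langle A(\cdot)1, 1\rangle$, where $1$ denotes the constant function, which lies in $\mathcal{K} = L^2(Y,\mu)$ because $\mu$ is a probability measure. Using $V1 = 1\circ\pi = 1$ together with the transfer identity, I compute $A_{1,1}(\Delta) = \langle E(\pi^{-1}(\Delta))1, 1\rangle = E_{1,1}(\pi^{-1}(\Delta))$. Applying the invariance relation (\ref{pvm_alphabet}) to the vector $1$, and noting that $R_i 1 = \frac{1}{\sqrt{N}}$, shows that $E_{1,1}$ is a Borel probability measure on the compact space $\Omega$ satisfying the Bernoulli invariance $E_{1,1}(\cdot) = \frac{1}{N}\sum_{i=0}^{N-1} E_{1,1}(\eta_i^{-1}(\cdot))$; by the uniqueness in Theorem \ref{ambient} this forces $E_{1,1} = P$. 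Consequently $A_{1,1} = P\circ\pi^{-1} = \mu$, the pushforward being the Hutchinson measure. Now if $U \subseteq Y$ is open with $U \cap X \neq \emptyset$, then since $\operatorname{supp}(\mu) = X$ we have $\mu(U) > 0$, hence $A_{1,1}(U) = \langle A(U)1, 1\rangle > 0$ and in particular $A(U) \neq 0$. Therefore every point of $X$ lies in $\operatorname{supp}(A)$, giving $X \subseteq \operatorname{supp}(A)$.

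I expect the main obstacle to be the identification $A_{1,1} = \mu$ as measures on all of $Y$, rather than only on the compact attractor $X$ as in the previous paper: one must check that $1 \in \mathcal{K}$, that $V1 = 1$, and that the uniqueness clause for the Bernoulli measure genuinely applies to $E_{1,1}$. Once this is in place, both inclusions reduce to the already-known facts that $\pi$ maps into $X$ and that the support of the Hutchinson measure $\mu$ is $X$ (Theorem \ref{ambient}), and the remaining bookkeeping is routine.
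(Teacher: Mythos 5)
Your proof is correct, and while your first inclusion $\operatorname{supp}(A) \subseteq X$ is exactly the paper's argument ($\pi^{-1}(Y\setminus X) = \emptyset$, hence $A(X^C) = V^*E(\emptyset)V = 0$), your proof of the reverse inclusion takes a genuinely different route. The paper argues by contradiction: if $x = \pi(\alpha) \in X$ lay in an open $W$ with $A(W) = 0$, then since the cells $\sigma_{\alpha_1}\circ\cdots\circ\sigma_{\alpha_K}(X)$ shrink onto $x$, some cell is contained in $W$; by monotonicity and an induction on the fixed-point equation one computes $E\left(\{\beta : \beta_n = \alpha_n, \ 1 \leq n \leq K\}\right) = R_{\alpha_1}^*\cdots R_{\alpha_K}^* R_{\alpha_K}\cdots R_{\alpha_1}$, giving the explicit operator lower bound $A(W) \geq V^* R_{\alpha_1}^*\cdots R_{\alpha_K}^* R_{\alpha_K}\cdots R_{\alpha_1} V > 0$, a contradiction. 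You instead reduce the operator statement to scalar measure theory: you show $A_{1,1} = \mu$ (via $E_{1,1} = P$ by uniqueness of the invariant measure, then pushing forward along $\pi$) and invoke $\operatorname{supp}(\mu) = X$ from Theorem \ref{ambient}. This is clean and in fact extends to all of $Y$ the remark made in the paper's introduction that $A_{f,f} = \mu$ for $f = 1$; what it costs is the measure-theoretic uniqueness machinery, and it yields only $A(U) \neq 0$ rather than the paper's explicit positive lower bound localized at each cell. Two small points to tighten: (i) the identification $P\circ\pi^{-1} = \mu$ is not stated in this paper and needs a one-line verification — use the relation (\ref{relation}) to check that $P\circ\pi^{-1}$ satisfies the invariance $\nu(\cdot) = \frac{1}{N}\sum_i \nu(\sigma_i^{-1}(\cdot))$, note it lies in $M(Y)$ because it is supported on the compact set $X$, and conclude by uniqueness in Theorem \ref{ambient}; (ii) you could bypass $\Omega$, $E$, and $\pi$ entirely by applying the fixed-point equation of Theorem \ref{extenduniquepovm} directly to the constant vector: since $F_i 1 = \frac{1}{\sqrt{N}}$, the scalar measure $A_{1,1}$ satisfies the Hutchinson invariance on $Y$, and it lies in $M(Y)$ because $A \in S_0(Y)$, so $A_{1,1} = \mu$ at once — making your argument independent even of the transfer identity $A(\cdot) = V^*E(\pi^{-1}(\cdot))V$ for this half of the proof.
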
 

\begin{proof} Consider the open subset $U = X^C \subseteq Y$. Note that $A(U) = V^*E(\pi^{-1}(U))V = V^*E(\emptyset)V = 0$. Hence, the support of $A$ is contained in $X$. To show that $X$ is contained in the support of $A$, we employ a proof by contradiction. Indeed, suppose the point $x \in X$ is not in the support of $A$. Then $x \in W$, where $W$ is an open subset of $Y$ such that $A(W) = 0$. Since $x \in X$, and $\pi: \Omega \rightarrow X \subseteq Y$ is onto, $x = \pi(\alpha)$ for some $\alpha = (\alpha_1, \alpha_2, ..., \alpha_k, ...) \in \Omega$. In particular, there exists a $K$ such that $x \in \sigma_{\alpha_1} \circ ... \circ \sigma_{\alpha_K} (X) \subseteq W$. Therefore, $A(W) \geq A(\sigma_{\alpha_1} \circ ... \circ \sigma_{\alpha_K} (X)) = V^*E(\pi^{-1}(\sigma_{\alpha_1} \circ ... \circ \sigma_{\alpha_K} (X))V$, by the monotone property of positive operator-valued measures. Notice that $\pi^{-1}(\sigma_{\alpha_1} \circ ... \circ \sigma_{\alpha_K} (X)) \supseteq \{ \beta \in \Omega : \beta_n = \alpha_n \text{ for } 1 \leq n \leq K \}$. By an induction argument, one can show that $E(\{ \beta \in \Omega : \beta_n = \alpha_n \text{ for } 1 \leq n \leq K \}) = R_{\alpha_1}^* ... R_{\alpha_K}^*R_{\alpha_K} ... R_{\alpha_1} > 0$. Hence, $A(W) \geq V^*E(\pi^{-1}(\sigma_{\alpha_1} \circ ... \circ \sigma_{\alpha_K} (X))V \geq  V^*R_{\alpha_1}^* ... R_{\alpha_K}^*R_{\alpha_K} ... R_{\alpha_1}V > 0$, which is a contradiction to the fact that $A(W) = 0$. 

\end{proof}

\section{Acknowledgements} 

I would like to recognize my graduate advisor Judith Packer for her excellent guidance during my time in graduate school.

\end{document}